\theoremstyle{plain}
\newtheorem{theorem}{Theorem}
\newtheorem{proposition}{Proposition}
\newtheorem{lemma}{Lemma}
\newtheorem{step}{Step}
\theoremstyle{remark}
\newtheorem*{notation}{{\sc Notation}}
\theoremstyle{definition}
\newtheorem{example}{Example}
\newcommand{\reft}[1]{Theorem \ref{#1}}
\newcommand{\refp}[1]{Proposition \ref{#1}}
\newcommand{\refl}[1]{Lemma \ref{#1}}
\newcommand{\refs}[1]{Step \ref{#1}}
\newcommand{\refeq}[1]{Eq. (\ref{#1})}
\def\CA{\mathcal{A}}
\def\CB{\mathcal{B}}
\def\CD{\mathcal{D}}
\def\CH{\mathcal{H}}
\def\CG{\mathcal{G}}
\def\CI{\mathcal{I}}
\def\CJ{\mathcal{J}}
\def\CL{\mathcal{L}}
\def\CM{\mathcal{M}}
\def\CO{\mathcal{O}}
\def\CU{\mathcal{U}}
\def\CV{\mathcal{V}}
\def\k{\Bbbk}
\def\C{\mathbb{C}}
\def\N{\mathbb{N}}
\def\R{\mathbb{R}}
\def\S{\mathbb{S}}
\def\qp{\mathbb{Q}_{p}}
\def\fq{\mathbb{F}_{q}}
\newcommand{\map}[3]{#1 \colon #2 \to #3}
\newcommand{\seq}[2]{#1_{1}, \ldots, #1_{#2}}
\newcommand{\set}[2]{\{ #1 \colon #2 \}}
\newcommand{\dset}[2]{\left\{ #1 \colon #2 \right\}}
\def\GL{\operatorname{GL}}
\def\U{\operatorname{U}}
\def\Hom{\operatorname{Hom}}
\def\Res{\operatorname{Res}}
\def\Ind{\operatorname{Ind}}
\def\CInd{\operatorname{c-Ind}}
\def\supp{\operatorname{supp}}
\def\spec{\operatorname{Spec}}
\def\all{\text{ for all }}
\begin{document}

%%%%%%%%%%%%%%%%%%%%%%%%%%%%%%%%%%%%%%%%%%%%%%%%%%%%%%%%
%%%%%%%%%%%%%%%%%%%%%%%%%%%%%%%%%%%%%%%%%%%%%%%%%%%%%%%%

\title[]{Smooth representations of unit groups of split basic algebras over non-Archimedean local fields}

\author[]{Carlos A. M. Andr\'e}

\author[]{Jo\~ ao Dias}

\thanks{This research was made within the activities of the Group for Linear, Algebraic and Combinatorial Structures of the Center for Functional Analysis, Linear Structures and Applications (University of Lisbon, Portugal), and was partially supported by the Portuguese Science Foundation (FCT) through the Strategic Project UID/MAT/04721/2013. The second author was partially supported by the Lisbon Mathematics PhD program (funded by the Portuguese Science Foundation). This work is part of the second author Ph.D. thesis.}

\address[C. A. M. Andr\'e]{Centro de An\'alise Funcional, Estruturas Lineares e Aplica\c c\~oes (Grupo de Estruturas Lineares e Combinat\'orias) \\ Departamento de Matem\'atica \\ Faculdade de Ci\^encias da Universidade de Lisboa \\ Campo Grande, Edif\'\i cio C6, Piso 2 \\ 1749-016 Lisboa \\ Portugal}
\email{caandre@ciencias.ulisboa.pt}

\address[J. Dias]{Centro de An\'alise Funcional, Estruturas Lineares e Aplica\c c\~oes (Grupo de Estruturas Lineares e Combinat\'orias) \\ Departamento de Matem\'atica \\ Faculdade de Ci\^encias da Universidade de Lisboa \\ Campo Grande, Edif\'\i cio C6, Piso 2 \\ 1749-016 Lisboa \\ Portugal}
\email{joaodias104@gmail.com}

\subjclass[2010]{20G25; 22D12; 22D30}

\date{\today}

\keywords{Split basic algebra; unit group; smooth representation; induction with compact support; Gutkin's conjecture}

\begin{abstract}
We consider smooth representations of the unit group $G = \CA^{\times}$ of a finite-dimensional split basic algebra $\CA$ over a non-Archimedean local field. In particular, we prove a version of Gutkin's conjecture, namely, we prove that every irreducible smooth representation of $G$ is compactly induced by a one-dimensional representation of the unit group of some subalgebra of $\CA$. We also discuss admissibility and unitarisability of smooth representations of $G$.
\end{abstract}

\maketitle

%%%%%%%%%%%%%%%%%%%%%%%%%%%%%%%%%%%%%%%%%%%%%%%%%%%%%%%%
%%%%%%%%%%%%%%%%%%%%%%%%%%%%%%%%%%%%%%%%%%%%%%%%%%%%%%%%

Let $\k$ be a non-Archimedean local field (such as a finite extension of the $p$-adic field $\qp$, or a field $\fq((t))$ of formal Laurent series in one variable over a finite field $\fq$); we equip $\k$ with the natural topology making it a locally compact totally disconnected topological field. We fix a non-trivial unitary character $\map{\vartheta}{\k^{+}}{\C^{\times}}$ of the additive group of $\k$, and for each $a \in \k$ we define $\map{\vartheta_{a}}{\k^{+}}{\C^{\times}}$ by $\vartheta_{a}(b) = \vartheta(ab)$ for all $b \in \k$; then, the mapping $a \mapsto \vartheta_{a}$ defines a topological isomorphism between $\k^{+}$ and its Pontryagin dual (see, for example, \cite[Proposition~1.7]{Bushnell2006a}).% [For the description of 

Let $\CA$ be a finite-dimensional associative $\k$-algebra (with identity), and let $\CJ(\CA)$ denote the Jacobson radical of $\CA$. We say that $\CA$ is a {\it basic $\k$-algebra} if the $\CJ(\CA)$ equals the set consisting of all nilpotent elements of $\CA$; in \cite{Szegedy1996a}, B. Szegedy refers to $\CA$ as an {\it N-algebra} over $\k$. It follows from Wedderburn's theorem (and from the usual process of ``lifting idempotents'') that a finite-dimensional $\k$-algebra $\CA$ is basic if and only if there are nonzero orthogonal idempotents $\seq{e}{n} \in \CA$ such that $$\CA = \big( \k_{1} e_{1} \oplus \cdots \oplus \k_{n} e_{n} \big) \oplus \CJ(\CA)$$ where $\seq{\k}{n}$ are finite field extensions of $\k$; we refer to $\CD = \k_{1} e_{1} \oplus \cdots \oplus \k_{n} e_{n}$ as the \textit{diagonal subalgebra} of $\CA$. In the case where $\k_{i} = \k$ for all $1 \leq i \leq n$, we refer to $\CA$ as a {\it split basic $\k$-algebra} (in  the terminology of \cite{Szegedy1996a}, $\CA$ is referred to as a {\it DN-algebra} over $\k$). The following easy observation is crucial for inductive arguments; a proof in the case where $\k$ is a finite field can be found in \cite[Lemmas~2.2~and~2.3]{Szegedy1996a}.

\begin{lemma} \label{subalgebra}
Let $\CA$ be a finite-dimensional split basic $\k$-algebra. Then, every subalgebra of $\CA$ is also a split basic $\k$-algebra.% Moreover, if $\CI$ is a nilpotent ideal of $\CA$, then $\CA\slash\CI$ is a split basic $\k$-algebra.
\end{lemma}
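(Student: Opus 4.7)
The plan is to transfer the decomposition $\CA = \CD \oplus \CJ(\CA)$ to an arbitrary unital subalgebra $\CB \subseteq \CA$ by analysing the restriction of the canonical projection $\pi \colon \CA \to \CA/\CJ(\CA) \cong \k^{n}$. Since $\CA$ is finite-dimensional, $\CJ(\CA)$ is nilpotent, so $\CB \cap \CJ(\CA)$ is a nilpotent two-sided ideal of $\CB$; in particular each of its elements is nilpotent.

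The first main step would be to identify the image $\pi(\CB)$. It is a unital subalgebra of $\k^{n}$, hence commutative and without nonzero nilpotent elements, so by the structure theorem for finite-dimensional commutative $\k$-algebras it decomposes as a finite product $K_{1} \times \cdots \times K_{m}$ of finite field extensions of $\k$. Each $K_{j}$ is cut out by a primitive idempotent of $\pi(\CB)$, which is also an idempotent of $\k^{n}$, so it has the form $\chi_{S_{j}}$ for some $S_{j} \subseteq \{1,\ldots,n\}$; the resulting embedding $K_{j} \hookrightarrow \k^{S_{j}}$ composed with any coordinate projection is a nonzero, hence injective, unital $\k$-algebra homomorphism $K_{j} \to \k$, forcing $K_{j} = \k$. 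Consequently $\pi(\CB) \cong \k^{m}$.

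Next I would apply idempotent lifting across the nilpotent ideal $\CB \cap \CJ(\CA)$: the orthogonal primitive idempotents $\seq{\bar f}{m}$ of $\pi(\CB)$, which sum to the identity, lift to orthogonal idempotents $\seq{\tilde f}{m} \in \CB$ summing to $1_{\CB}$. This yields a vector-space decomposition
\[
\CB = \big( \k \tilde f_{1} \oplus \cdots \oplus \k \tilde f_{m} \big) \oplus \big( \CB \cap \CJ(\CA) \big).
\]
To finish, I would identify $\CJ(\CB)$ with the set of nilpotent elements of $\CB$. Every element of $\CB \cap \CJ(\CA)$ is nilpotent, and $\CB \cap \CJ(\CA)$ is a nil (in fact nilpotent) ideal of $\CB$, hence is contained in $\CJ(\CB)$; conversely $\CJ(\CB)$ consists of nilpotent elements of $\CB$, each of which is nilpotent in $\CA$ and thus lies in $\CJ(\CA)$. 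The three sets coincide, and the displayed decomposition is exactly the one required to conclude that $\CB$ is a split basic $\k$-algebra.

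The step carrying the most weight is the verification that the semisimple quotient $\pi(\CB)$ is actually \emph{split}, that is, that its simple factors are $\k$ rather than nontrivial finite extensions of $\k$; this is precisely where the hypothesis that $\CA$ is split (not merely basic) is used, and without it one would only obtain that every subalgebra of $\CA$ is basic. The remaining ingredients---idempotent lifting across a nilpotent ideal and the identification of the Jacobson radical of a finite-dimensional algebra with its largest nil ideal---are standard.
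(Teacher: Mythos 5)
Your proof is correct and takes essentially the same route as the paper's: both pass to the semisimple quotient $\CA/\CJ(\CA)\cong\k^{n}$ and show that each simple factor of the image of $\CB$ there must equal $\k$ by projecting onto a single coordinate (the paper's computation $\k_{j}e'_{j}e_{i} \subseteq \CA e_{i} = \k e_{i}$ is exactly your coordinate-projection argument). The only difference is bookkeeping: where the paper invokes $\CJ(\CB)=\CB\cap\CJ(\CA)$ and the standard idempotent-lifting characterisation of basic algebras to reduce at once to the semisimple case, you carry out the lifting of idempotents and the identification of $\CJ(\CB)$ with the nilpotent elements explicitly.
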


\begin{proof}
Let $\CB$ be a subalgebra of $\CA$, and let $\CJ(\CB)$ denote the Jacobson radical of $\CB$. Since $\CJ(\CB) = \CB \cap \CJ(\CA)$, it is clear that $\CB$ is a basic $\k$-algebra; moreover, the $\k$-algebra $\CB\slash \CJ(\CB)$ is naturally isomorphic to the subalgebra $\big(\CB+\CJ(\CA)\big)\slash \CJ(\CA)$ of the semisimple $\k$-algebra $\CA \slash \CJ(\CA)$. Therefore, without loss of generality, we may assume that $\CA$ is a split basic semisimple $\k$-algebra. Since $\CB$ is a basic semisimple $\k$-algebra, there are nonzero orthogonal idempotents $\seq{e'}{m} \in \CB$ such that $\CB = \k_{1} e'_{1} \oplus \cdots \oplus \k_{n} e'_{m}$ where $\seq{\k}{n}$ are finite field extensions of $\k$. On the other hand, let $\seq{e}{n} \in \CA$ be nonzero orthogonal idempotents such that $\CA = \k e_{1} \oplus \cdots \oplus \k e_{n}$. It is straightforward to check that there exists a subset partition $\seq{I}{m}$ of $\{1,\ldots,n\}$ such that $e'_{j} = \sum_{i \in I_{j}} e_{i}$, and thus $e_{j}e'_{i} = e_{i}$ for all $i \in I_{j}$ and all $1 \leq j \leq m$. It follows that $\k_{j}e'_{j}e_{i} = \k_{j}e_{i} \subseteq \CA  e_{i} = \k e_{i}$ for all $i \in I_{j}$ and all $1 \leq j \leq m$, which clearly implies that $\k_{j} = \k$ for all $1 \leq j \leq m$.
\end{proof}

In the following result we list some elementary properties which will be used repeatedly throughout the paper; for a detailed proof (which does not depend on the finiteness of the field $\k$) we refer to \cite[Lemma~2.1]{Halasi2006a}.

\begin{lemma} \label{bimodule}
Let $\CA$ be a finite-dimensional split basic $\k$-algebra, let $\CD$ be the diagonal subalgebra of $\CA$, and let $\seq{e}{n} \in \CA$ be nonzero orthogonal idempotents such that $\CD = \k e_{1} \oplus \cdots \oplus \k e_{n}$. The following properties hold.
\begin{enumerate}
\item If $\CV$ is an (arbitrary) $\CD$-bimodule, then $\CV$ decomposes as a direct sum of the (non-zero) homogeneous sub-bimodules $e_{i}\CV e_{j}$ for $1 \leq i,j \leq n$.
\item For every sub-bimodule $\CV_{1}$ of a $\CD$-bimodule $\CV$, there exists a sub-bimodule $\CV_{2}$ of $\CV$ such that $\CV = \CV_{1} \oplus \CV_{2}$.
\item Every $\CD$-bimodule decomposes as a direct sum of one-dimensional sub-bimodules.
\item If $\CV$ is a one-dimensional $\CD$-bimodule and $v \in \CV$ is such that $\CV = \k v$, then there exist uniquely determined $1 \leq i, j \leq n$ such that $v = e_{i}ve_{j}$.
\end{enumerate}
\end{lemma}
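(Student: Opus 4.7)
The plan is to reduce everything to elementary linear algebra over $\k$ by exploiting the Peirce decomposition relative to the complete system of orthogonal idempotents $\seq{e}{n}$. The identity of $\CD$ is $e_{1} + \cdots + e_{n}$, which coincides with $1_{\CA}$; moreover, orthogonality gives $e_{k} \cdot e_{i}ve_{j} = \delta_{ki}\, e_{i}ve_{j}$ and $e_{i}ve_{j} \cdot e_{l} = \delta_{jl}\, e_{i}ve_{j}$. The central observation on which the whole argument rests is that each homogeneous piece $e_{i}\CV e_{j}$ is a $\CD$-sub-bimodule of $\CV$ on which $\CD$ acts by scalars (through the characters $\lambda e_{i} \mapsto \lambda$ on the left and $\mu e_{j} \mapsto \mu$ on the right), so every $\k$-linear subspace of $e_{i}\CV e_{j}$ is automatically a $\CD$-sub-bimodule.

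Granting this, part (i) follows from $v = 1\cdot v \cdot 1 = \sum_{i,j} e_{i}ve_{j}$ for the spanning statement, and from left/right multiplication by $e_{k}$ and $e_{l}$ in an equation $\sum_{i,j} v_{ij} = 0$ (with $v_{ij} \in e_{i}\CV e_{j}$) for directness. Discarding the zero summands gives the stated decomposition into non-zero homogeneous sub-bimodules.

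Part (ii) is then handled by applying (i) to both $\CV$ and $\CV_{1}$, which yields $e_{i}\CV_{1}e_{j} \subseteq e_{i}\CV e_{j}$ for every pair $(i,j)$; choosing an arbitrary $\k$-linear complement $W_{ij}$ of $e_{i}\CV_{1}e_{j}$ inside $e_{i}\CV e_{j}$, the key observation guarantees that each $W_{ij}$, and hence $\CV_{2} = \bigoplus_{i,j} W_{ij}$, is a sub-bimodule. Combining the two Peirce decompositions gives $\CV = \CV_{1} \oplus \CV_{2}$. For (iii), one chooses any $\k$-basis of each $e_{i}\CV e_{j}$; each basis vector spans a one-dimensional $\CD$-sub-bimodule, and the totality of these one-dimensional pieces reassembles $\CV$ via (i).

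Finally, (iv) is immediate from (i) applied to $\CV = \k v$: the direct-sum decomposition has total $\k$-dimension one, so exactly one component $e_{i}\CV e_{j}$ is non-zero, it must equal $\CV$, and the corresponding pair $(i,j)$ is uniquely determined, giving $v = e_{i}ve_{j}$. No substantive obstacle is anticipated: the whole lemma is a soft consequence of the Peirce decomposition together with the fact that $\CD$ acts by scalars on each homogeneous component, and the proof is essentially a verification once that observation is in place.
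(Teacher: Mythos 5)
Your argument is correct: the Peirce decomposition with respect to $e_{1},\ldots,e_{n}$ (which sum to $1_{\CA}$, so that $\CD$ acts by scalars on each $e_{i}\CV e_{j}$ and every $\k$-subspace of such a component is automatically a sub-bimodule) delivers all four parts exactly as you describe, the only implicit hypothesis being the standard convention that the bimodules are unital, as all bimodules occurring in the paper are. The paper itself offers no proof, referring instead to Halasi's Lemma~2.1, whose argument is this same standard one, so your proposal matches the intended approach.
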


Let $G = \CA^{\times}$ denote the unit group of a split basic $\k$-algebra $\CA$. For any (nilpotent) subalgebra $\CU$ of $\CJ(\CA)$, the set $1+\CU$ is a subgroup of $G$ to which we refer as an \textit{algebra subgroup} of $G$ (as defined in \cite{Isaacs1995a}); similarly, if $\CI \subseteq \CJ(\CA)$ is an ideal of $\CA$, we refer to $1+\CI$ as an \textit{ideal subgroup} of $G$. In the particular case where $\CI = \CJ(\CA)$, it is clear that $P = 1+\CJ(\CA)$ is a normal subgroup of $G$; furthermore, $G$ is the semidirect product $G = TP$ where $T \leq G$ is isomorphic to the unit group of $\CA/\CJ(\CA)$. Since $\CA$ is a split basic $\k$-algebra, $T$ is isomorphic to a direct product $\k^{\times} \times \cdots \times \k^{\times}$ of $n = \dim \CA\slash\CJ(\CA)$ copies of the multiplicative group $\k^{\times}$ of $\k$. In fact, $T = \CD^{\times}$ is the unit group of the diagonal subalgebra $\CD$ of $\CA$; we will refer to $T$ as the {\it diagonal subgroup} of $G$.

\begin{example}
Let $\CM_{n}(\k)$ denote the $\k$-algebra consisting of all $n \times n$ matrices with entries in $\k$, and let $\CA = \CB_{n}(\k)$ denote the Borel subalgebra of $\CM_{n}(\k)$ consisting of all upper-triangular matrices; hence, $G = \CA^{\times}$ is the standard Borel subgroup $B_{n}(\k)$ of the general linear group $\GL_{n}(\k)$ (consisting of all invertible matrices in $\CM_{n}(\k)$). In this case, $T \leq G$ is the standard torus consisting of all diagonal matrices, and $P = 1+\CJ(\CA)$ is the standard unitriangular group; we note that $\CJ(\CA)$ is the nilpotent ideal of $\CA$ consisting of all upper-triangular matrices with zeroes on the main diagonal. In this case, for each $1 \leq i \leq n$, the idempotent $e_{i} \in \CA$ can be chosen to be the elementary matrices $e_{i} = e_{i,i}$ having a unique non-zero entry (equal to $1$) in the position $(i,i)$, and thus $\CD = \k e_{1} \oplus \cdots \oplus \k e_{n}$ is the subalgebra of $\CA$ consisting of all diagonal matrices.
\end{example}

The topology of $\k$ induces naturally a topology in $G = \CA^{\times}$ with respect to which $G$ becomes a locally compact totally disconnected topological group (that is, a topological group such that  every open neighbourhood of the identity contains a compact open subgroup); for simplicity, we follow the terminology of \cite{Boyarchenko2011a} and refer to such a group as an \textit{$\ell$-group}. (For the definition and main properties of $\ell$-groups, we refer mainly to \cite[Chapter~I]{Bushnell2006a}.) We notice that $G$ is second countable. On the other hand, any algebra subgroup $Q$ of $G$ is an \textit{$\ell_{c}$-group} (see \cite{Boyarchenko2011a}) which means that $Q$ is the filtered union of its compact open subgroups; in other words, every element of $Q$ is contained in a compact open subgroup, and any two such subgroups are contained in a third such subgroup.

We recall that a \textit{(complex) representation} of $G$ is a pair $(\pi,V)$ consisting of a complex vector space $V$ (not necessarily of finite dimension) and group homomorphism $\map{\pi}{G}{\GL(V)}$ where $\GL(V)$ denotes the group consisting of all linear automorphisms of $V$; for simplicity, we will refer to $V$ as a \textit{(left) $G$-module} with (linear) $G$-action defined by $gv = \pi(g)v$ for all $g \in G$ and all $v \in V$. A $G$-module $V$ is said to be \textit{smooth} if, for every $v \in V$, the centraliser $G_{v} = \set{g \in G}{gv = v}$ is an open subgroup of $G$; in the particular case where the vector space $V$ is one-dimensional, we naturally obtain a group homomorphism $\map{\vartheta}{G}{\C^{\times}}$ with open kernel. We will refer to such a homomorphism as a \textit{smooth character} of $G$, and denote by $G^{\circ}$ the set consisting of all smooth characters of $G$. A smooth character $\vartheta \in G^{\circ}$ can be naturally viewed as a smooth representation of $G$; indeed, an arbitrary one-dimensional $G$-module is smooth if and only if it affords a smooth character of $G$. Throughout the paper, for every smooth character $\vartheta \in G^{\circ}$, we will denote by $\C_{\vartheta}$ the one-dimensional smooth $G$-module whose underlying vector space is $\C$ and where the (linear) $G$-action is given by $g\alpha = \vartheta(g)\alpha$ for all $g \in G$ and all $\alpha \in \C$. %; we will refer to $\C_{\vartheta}$ as the \textit{canonical one-dimensional smooth $G$-module} associated with $\vartheta$.%[If $(\pi,V)$ is a representation of $G$ and $V$ is finite-dimensional, then the character afforded by the $G$-module $V$ is the trace-function $\map{\chi_{V}}{G}{\C}$ defined by $\chi_{V}(g) = \mathrm{trace}(\pi(g))$ for all $g \in G$.] 

It is well-known that $G^{\circ}$ is a group with respect to the usual multiplication of characters; it should not be confused with the \textit{Pontryagin dual} of $G$ which consists of all unitary characters $\map{\vartheta}{G}{\C^{\times}}$ of $G$. By definition, a \textit{unitary character} of $G$ is a continuous group homomorphism $\map{\vartheta}{G}{\C^{\times}}$ whose image $\vartheta(G)$ lies inside the unit circle $\S^{1}$ in $\C$. We observe that every unitary character of $G$ is a smooth character, but the converse is not necessarily true; however, for an arbitrary $\ell_{c}$-group $Q$, it is well-known that $Q^{\circ}$ equals the Pontryagin dual of $Q$ (see \cite[Proposition~1.6]{Bushnell2006a}; see also \cite[Lemma~4.9]{Boyarchenko2011a}).% A smooth character $\vartheta \in G^{\circ}$ can be naturally viewed as a smooth representation of $G$; indeed, an arbitrary one-dimensional $G$-module is smooth if and only if it affords a smooth character of $G$. [If $(\pi,V)$ is a representation of $G$ and $V$ is finite-dimensional, then the character afforded by the $G$-module $V$ is the trace-function $\map{\chi_{V}}{G}{\C}$ defined by $\chi_{V}(g) = \mathrm{trace}(\pi(g))$ for all $g \in G$.]% indeed, if $W$ is a one-dimensional complex vector space, then we may define $gw = \vartheta(g)w$ for all $g \in G$ and all $w \in W$  with degree one where we identify $\GL_{1}(\C)$ with $\C^{\times}$; indeed, an arbitrary representation of $G$ with degree one is smooth if and only if it is equivalent to a representation defined by a smooth character of $G$.. In the particular case where the vector space $V$ is one-dimensional, we naturally obtain a group homomorphism $\map{\vartheta}{G}{\C^{\times}}$ with open kernel; we will refer to such a homomorphism as a \textit{smooth character} of $G$, and denote by $G^{\circ}$ the set consisting of all smooth characters of $G$. $G^{\circ}$ is clearly a group with respect to the usual multiplication of characters, but it should not be confused with the Pontryagin dual of $G$ which consists of all unitary characters $\map{\vartheta}{G}{\C^{\times}}$ of $G$; by definition, a \textit{unitary character} of $G$ is a continuous group homomorphism $\map{\vartheta}{G}{\C^{\times}}$ whose image $\vartheta(G)$ lies inside the unit circle $\S^{1}$ in $\C$. We note that every unitary character of $G$ is a smooth character, but the converse is not necessarily true; however, for an arbitrary $\ell_{c}$-group $Q$, it is well-known that $Q^{\circ}$ equals the Pontryagin dual of $Q$ (see, for example, \cite[Lemma~4.9]{Boyarchenko2011a}). %the unitarisability of a smooth character $\vartheta \in Q^{\circ}$ is equivalent to the statement that the image $\vartheta(P)$ lies inside the unit circle $\S^{1}$ in $\C$ (see \cite[Lemma~4.9]{Boyarchenko2011a}) %{\color{magenta}and denote by $\C_{\vartheta}$ the one-dimensional (smooth) $G$-module with linear $G$-action defined by $gw = \vartheta(g)w$ for all $g \in G$ and all $w \in \C_{\vartheta}$}.

A morphism between smooth $G$-modules is defined as a morphism of abstract $G$-modules; we refer to such a morphism as a \textit{homomorphism of $G$-modules} (or simply a \textit{$G$-homomorphism}), and denote by $\Hom_{G}(V,V')$ the (complex) vector space consisting of all homomorphisms between $G$-modules $V$ to $V'$. We say that two smooth representations $(\pi,V)$ and $(\pi',V')$ of $G$ are \textit{equivalent} if there is an \textit{isomorphism} $\varphi \in \Hom_{G}(V,V')$ (that is, a linear isomorphism which commutes with the $G$-actions); if this is the case, then the smooth $G$-modules $V$ and $V'$ are said to be \textit{isomorphic}, and we write $V \cong V'$. A smooth $G$-module $V$ is said to be \textit{irreducible} if $V \neq \{0\}$ and $\{0\}$ and $V$ are the only $G$-invariant subspaces of $V$. Since $G$ is an $\ell$-group (hence, $G$ is second countable), there is a natural linear isomorphism $\Hom_{G}(V,V) \cong \C$ for every irreducible smooth $G$-module $V$; the proof of this version of Schur's Lemma is due to H. Jacquet \cite{Jacquet1975a} (it can be found in \cite[pg.~21]{Bushnell2006a}; see also \cite{Cartier1979a}). As a consequence, we deduce that, if $G$ is abelian, then every irreducible smooth $G$-module is one-dimensional (and hence affords a smooth character of $G$).

If $H$ is a subgroup of $G$ and $V$ is a $G$-module, we denote by $V^{H}$ the vector subspace of $V$ consisting of all $v \in V$ such that $hv = v$ for all $h \in H$; then, $V$ is a smooth $G$-module if and only if $V = \bigcup_{K} V^{K}$ where $K$ runs over all compact open subgroups of $G$. A smooth $G$-module $V$ is said to be \textit{admissible} if, for every compact open subgroup $K$ of $G$, the subspace $V^{K}$ is finite-dimensional; on the other hand, $V$ is said to be \textit{unitarisable} if $V$ has a positive definite Hermitian inner product invariant under the action of $G$. We recall the (usual notion) of unitary representations (which are not necessarily smooth) of topological groups. By a \textit{unitary representation} of a topological group $G$ we mean a pair $(\pi,\CH)$ where $\CH$ is an Hilbert space over $\C$ and $\map{\pi}{G}{\U(\CH)}$ is a continuous group homomorphism from $G$ to the group of unitary linear automorphisms of $\CH$ equipped with the strong operator topology; in this case, the representation $(\pi,\CH)$ is said to be \textit{irreducible} if $\CH \neq \{0\}$ and $\{0\}$ and $\CH$ are the only $\pi(G)$-invariant closed subspaces of $\CH$.

%By a \textit{unitary representation} of a topological group $G$ we mean a continuous group homomorphism $\map{\pi}{G}{\U(\CH)}$ where $\CH$ is an Hilbert space $\CH$ over $\C$ and $\U(\CH)$ is the group of unitary linear automorphisms of $\CH$ equipped with the strong operator topology; in this case, the representation $\map{\pi}{G}{\U(\CH)}$ is said to be \textit{irreducible} if $\CH \neq \{0\}$ and $\{0\}$ and $\CH$ are the only $\pi(G)$-invariant closed subspaces of $\CH$.

A major diference between $\ell$-groups and $\ell_{c}$-groups (hence, in particular, between $G = \CA^{\times}$ and its algebra subgroups) concerns with admissibility and unitarisability of smooth modules. Indeed, \cite[Theorem~1.3]{Boyarchenko2011a} asserts that, if $P$ is an algebra group over a non-Archimedean local field $\k$, then every smooth irreducible $P$-module is admissible and unitarisable; in general, by an algebra group over an arbitrary field $\k$ we mean a group of the form $1+\CJ$ where $\CJ$ is a finite-dimensional nilpotent $\k$-algebra (with product formally defined by $(1+a)(1+b) = 1+a+b+ab$ for all $a,b \in \CJ$). In particular, the unitarisability of a smooth character $\vartheta \in P^{\circ}$ is equivalent to the statement that the image $\vartheta(P)$ lies inside the unit circle $\S^{1}$ in $\C$ (by \cite[Proposition~1.6]{Bushnell2006a}). However, the absolute value $|\cdot|$ of a non-Archimedean local field $\k$ defines a smooth homomorphism $\map{|\cdot|}{\k^{\times}}{\C^{\times}}$ whose image does not lie inside $\S^{1}$.\medskip

The main goal of this paper is to study smooth modules for the unit group $G = \CA^{\times}$ of an arbitrary finite-dimensional split basic $\k$-algebra $\CA$; in particular, we aim to establish that every irreducible smooth $G$-module may be obtained by induction (with compact supports) from a one-dimensional smooth module for the unit group $H = \CB^{\times}$ of some subalgebra $\CB$ of $\CA$.% {\color{blue} We will consider two types of induction: \textit{smooth induction} and \textit{induction with compact supports}.}

Let $G$ be an arbitrary $\ell$-group, and let $H$ be a closed subgroup of $G$; hence, $H$ is also an $\ell$-group. Let $W$ be an arbitrary smooth $H$-module, and define $\Ind^{G}_{H}(W)$ to be the (complex) vector space consisting of all functions $\map{\phi}{G}{W}$ which satisfy the following two conditions:
\begin{enumerate}
\item  $\phi(hg) = h\phi(g)$ for all $h \in H$ and all $g \in G$;
\item there is an compact open subgroup $K$ of $G$ such that $\phi(gk) = \phi(g)$ for all $g \in G$ and all $k \in K$.
\end{enumerate}
We define a (linear) $G$-action on $\Ind^{G}_{H}(W)$ by the rule $$(g\phi)(g') = \phi(g'g),\qquad g,g' \in G,\ \phi \in\Ind^{G}_{H}(W);$$ then, $\Ind^{G}_{H}(W)$ becomes a smooth $G$-module, to which we refer as the smooth $G$-module \textit{smoothly induced} by $W$. If $\map{\rho}{H}{\GL(W)}$ is the smooth representation of $H$ which is canonically determined by the smooth $H$-module $W$, then we will denote by $\Ind^{G}_{H}(\rho)$ the smooth representation of $G$ which is canonically determined by the smooth $G$-module $\Ind^{G}_{H}(W)$. %determines uniquely a smooth representation $\map{\rho}{H}{\GL(W)}$ of $H$, and similarly the smooth $G$-module $\Ind^{G}_{H}(W)$ determines uniquely a smooth representation $\map{\pi}{G}{\GL(\Ind^{G}_{H}(W))}$; as usual, we will write $ \pi = \Ind^{G}_{H}(\rho)$. % The mapping $W \to \Ind^{G}_{H}(W)$ defines a functor from the category of smooth $H$-modules (and $H$-homo\-mor\-phisms) to the category of smooth $G$-modules (and $G$-homomorphisms). This functor is additive and exact (see \cite[Proposition~2.4]{Bushnell2006a}).%; moreover, it is not hard to prove that the standard Frobenius reciprocity holds (see \cite[pg.~18]{Bushnell2006a}): for every smooth $G$-module $V$ and every smooth $H$-module $W$, there is a linear isomorphism $$\Hom_{G}(V,\Ind^{G}_{H}(W)) \cong \Hom_{H}(\Res_{H}^{G}(V),W)$$ where $\Res_{H}^{G}(V)$ denotes the restriction of $V$ to $H$ (which is clearly a smooth $H$-module).%;%$$%\pi = \Ind^{G}_{H}(\rho)$ to be the homomorphism $\map{\pi}{G}{\GL(\Ind^{G}_{H})}$
In particular, for every smooth character $\vartheta \in H^{\circ}$, we obtain a smooth representation $\Ind^{G}_{H}(\vartheta)$ of $G$ on the vector space $\Ind^{G}_{H}(\C_{\vartheta})$ consisting of all complex-valued functions $\map{\phi}{G}{\C}$ which satisfy the two conditions as above; notice that condition (i) reads as $\phi(hg) = \vartheta(h)\phi(g)$ for all $h \in H$ and all $g \in G$.

On the other hand, we define $\CInd^{G}_{H}(W)$ to be the vector subspace of $\Ind^{G}_{H}(W)$ consisting of all functions $\map{\phi}{G}{W}$ which are \textit{compactly supported modulo $H$}, which means that $\supp(\phi) \subseteq HC$ for some compact subset $C \subseteq G$; as usual, $\supp(\phi)$ denotes the \textit{support} of $\phi$. % (which we consider as the closure of the subset $\set{g \in G}{\phi(g) \neq 0}$). 
It is clear that $\CInd^{G}_{H}(W)$ is a $G$-invariant vector subspace of $\Ind^{G}_{H}(W)$, and thus $\CInd^{G}_{H}(W)$ becomes a smooth $G$-module to which we refer as the smooth $G$-module \textit{compactly induced} (or simply \textit{c-induced}) by $W$. As in the case of smooth induction, if $\map{\rho}{H}{\GL(W)}$ is the smooth representation of $H$ which is canonically determined by the smooth $H$-module $W$, then we will denote by $\CInd^{G}_{H}(\rho)$ the smooth representation of $G$ which is canonically determined by the smooth $G$-module $\CInd^{G}_{H}(W)$. %we observe that the smooth $H$-module $W$ determines uniquely a smooth representation $(\rho,W)$ of $H$, and similarly the smooth $G$-module $\CInd^{G}_{H}(W)$ determines uniquely a smooth representation $(\pi,\CInd^{G}_{H}(W))$ of $G$; we will write $ \pi = \CInd^{G}_{H}(\rho)$, and refer to the smooth representation $\big(\CInd^{G}_{H}(\rho),\CInd^{G}_{H}(W)\big)$ of $G$ as being \textit{compactly induced} (or \textit{c-induced}) by $(\rho,W)$. In particular, for every smooth character $\vartheta \in H^{\circ}$, we obtain a smooth representation $\CInd^{G}_{H}(\vartheta)$ of $G$ on the vector space $\CInd^{G}_{H}(\C_{\vartheta})$.
%we will denote $\CInd^{G}_{H}(\rho)$ the group homomorphism $G \to \GL\big(\CInd^{G}_{H}(W)\big)$ which is naturally associated with the smooth $G$-module $\CInd^{G}_{H}(W)$; we also say that the smooth representation $\big(\CInd^{G}_{H}(\rho),\CInd^{G}_{H}(W)\big)$ of $G$ is \textit{smoothly induced} by $(\rho,W)$. %As for smooth induction, the mapping $W \to \CInd^{G}_{H}(W)$ defines a functor from the category of smooth $H$-modules to the category of smooth $G$-modules which is additive and exact. %; moreover, Frobenius reciprocity holds in the case where $H$ is an open subgroup of $G$ (see \cite[Proposition~2.5]{Bushnell2006a}): for every smooth $G$-module $V$ and every smooth $H$-module $W$, there is a linear isomorphism $$\Hom_{G}(\CInd^{G}_{H}(W),V) \cong \Hom_{H}(W,\Res_{H}^{G}(V)).$$
%
%In the general case (where $H$ is a closed subgroup of $G$),
In the general situation, it is obvious that $\CInd^{G}_{H}(W) = \Ind^{G}_{H}(W)$ whenever the coset space $H\backslash G$ is compact; however, the canonical inclusion map $\CInd^{G}_{H}(W) \hookrightarrow \Ind^{G}_{H}(W)$ may be an isomorphism even when $H\backslash G$ is not compact (this may occur, for example, in the case where $G$ is an algebra group and $H$ is an algebra subgroup of $G$; see \cite[Theorem~1.3]{Boyarchenko2011a}) .% The following easy remark will be useful.

%\begin{lemma} \label{cindopen}
%Let $H$ be an open (hence, closed) subgroup of an $\ell$-group $G$, and let $W$ be a smooth $H$-module. Then, $W$ is naturally identified with a vector subspace $\widetilde{W}$ of $\CInd^{G}_{H}(W)$, and there is a direct sum decomposition $$\CInd^{G}_{H}(W) = \bigoplus_{g \in X} g\widetilde{W}$$ where $X$ is a complete set of representatives of the right cosets of $H$ in $G$.
%\end{lemma}
%
%\begin{proof}
%For every $w \in W$, we define the map $\map{\phi_{w}}{G}{W}$ by $$\phi_{w}(g) = \begin{cases} gw, & \text{if $g \in H$,} \\ 0, & \text{if $g \notin H$}, \end{cases} \qquad g \in G.$$ It is clear that $\phi_{w} \in \CInd^{G}_{H}(W)$ for all $w \in W$, and that the mapping $w \mapsto \phi_{w}$ defines an injective $H$-homomorphism from $W$ into $\CInd^{G}_{H}(W)$. We therefore define $\widetilde{W} = \set{\phi_{w}}{w \in W}$.
%
%The remaining assertion is straighforward; notice that, if $\phi \in \CInd^{G}_{H}(W)$ is arbitrary, then there is a compact subset $X_{\phi} \subseteq G$ such that $\supp(\phi) \subseteq HX_{\phi}$, and thus there is a finite subset $Y_{\phi} \subseteq X_{\phi}$ such that $\supp(\phi) \subseteq HY_{\phi}$ (because $H$ is open, hence $Hx$ is also open for all $x \in X_{\phi}$).
%\end{proof}

The main goal of this paper is to prove the following result.

\begin{theorem} \label{main}
Let $\CA$ be a finite-dimensional split basic algebra over a non-Archime\-dean local field $\k$, let $G = \CA^{\times}$ be the unit group of $\CA$, and  let $V$ be an irreducible smooth $G$-module. Then, there exists a subalgebra $\CB$ of $\CA$ and a smooth character of the unit group $H = \CB^{\times}$ such that $V \cong \CInd^{G}_{H}(\C_{\vartheta})$.% where $H = \CB^{\times}$ is the unit group of $\CB$ and $W$ is a one-dimensional smooth $H$-module.%smooth character $\map{\lambda}{H}{\C^{\times}}$ of the unit group $H = \CB^{\times}$, such that $V = \CInd^{G}_{H}(W)$ where $W$ is a one-dimensional $H$-module which affords the character $\lambda$.%with $H$-action defined by $hw = \lambda(h)w$ for all $h \in H$ and all $w \in \C_{\lambda}$.
\end{theorem}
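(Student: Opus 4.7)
The plan is to prove \reft{main} by induction on $n = \dim_{\k} \CA$. The base case is immediate when $\CJ(\CA) = 0$: in this case $G$ is a direct product of copies of $\k^{\times}$, hence abelian, so by Schur's Lemma for $\ell$-groups every irreducible smooth $G$-module is one-dimensional, and we take $\CB = \CA$ with $\vartheta$ the character afforded by $V$. For the inductive step, assume $\CJ := \CJ(\CA) \neq 0$ and let $m \geq 1$ be maximal with $\CJ^{m} \neq 0$. Using \refl{bimodule}, pick a one-dimensional $\CD$-sub-bimodule $\CI = \k e$ of $\CJ^{m}$, with $e = e_{i} e e_{j}$ for unique indices $i, j$. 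Since $\CJ \cdot \CI = \CI \cdot \CJ = 0$, the subspace $\CI$ is a two-sided ideal of $\CA$, and $N := 1 + \CI$ is a closed normal subgroup of $G$ topologically isomorphic to $\k^{+}$. A direct computation using the $\CD$-bimodule structure of $\CI$ gives $g^{-1} e g = (g_{j}/g_{i})\, e$ when $i \neq j$ and $g^{-1} e g = e$ when $i = j$, where $g_{k}$ denotes the $e_{k}$-component of the diagonal part of $g$; thus, identifying the group $N^{\circ}$ of smooth characters of $N$ with $\k$ via $\vartheta$, the conjugation action of $G$ on $N^{\circ}$ is multiplication by the ratio $g_{j}/g_{i}$.

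If $N$ acts trivially on $V$, then $V$ descends to a smooth irreducible module over $G/N \cong (\CA/\CI)^{\times}$, the unit group of the smaller split basic algebra $\CA/\CI$; the induction hypothesis there produces a subalgebra and smooth character which lift back through the projection $\CA \to \CA/\CI$ using the compatibility of compact induction with inflation. If $N$ acts non-trivially and the chosen $\CI$ satisfies $i \neq j$, we invoke a Mackey--Clifford-type theorem for smooth $\ell$-group representations: there is a smooth character $\chi \in N^{\circ}$ whose $N$-eigenspace $V_{\chi} \leq V$ is non-zero; the stabiliser $G_{\chi}$ of $\chi$ equals $\CB^{\times}$ for the codimension-one subalgebra $\CB$ of $\CA$ consisting of those $a \in \CA$ whose diagonal components at $e_{i}$ and $e_{j}$ coincide; the space $V_{\chi}$ is an irreducible smooth $\CB^{\times}$-module; and the natural map $\CInd^{G}_{\CB^{\times}}(V_{\chi}) \to V$ is a $G$-isomorphism. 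Applying the induction hypothesis to $V_{\chi}$ and then transitivity of compact induction completes the argument in this case.

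The principal obstacle is the \emph{diagonal} situation, in which $\CJ^{m}$ lies entirely in $\bigoplus_{k} e_{k} \CA e_{k}$ so that every choice of $\CI$ as above forces $i = j$; then $N$ is central in $G$, the Mackey reduction collapses, and the central character of $N$ on $V$ may fail to extend to a smooth character of $G$. The plan here is first to reduce, via a block decomposition, to the case where $\CA$ is local (that is, $n = 1$ and $\CD = \k$), writing $\CA$ as a direct product of local split basic algebras and decomposing $V$ as a tensor product across the blocks; then in the local case one combines the algebra-group theorem \cite[Theorem~1.3]{Boyarchenko2011a} applied to the algebra group $P = 1 + \CJ$ with Clifford theory for the semidirect decomposition $G = \k^{\times} \ltimes P$ in order to absorb the action of the torus $\k^{\times}$ on the compactly induced characters produced by Boyarchenko. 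A secondary technical point is the rigorous proof of the Mackey--Clifford isomorphism for the non-compact abelian group $N \cong \k^{+}$: since $N$ is not compact, the induction must be compactly supported, and one must check that the natural evaluation map $\CInd^{G}_{G_{\chi}}(V_{\chi}) \to V$ is an isomorphism and not merely an injection.
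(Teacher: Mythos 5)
Your outer strategy (peel off a one-dimensional ideal $\CI = \k e \subseteq \CJ^{m}$, then split into cases) is genuinely different from the paper, which instead restricts $V$ to $P=1+\CJ$ and works with the minimal $n$ such that $1+\CJ^{n}$ acts by a character on an irreducible quotient; but as written your argument has two real gaps. First, the Mackey--Clifford step in the case $i \neq j$ is formulated with $N$-\emph{eigenspaces}, and this fails: since $N = 1+\CI \cong \k^{+}$ is not compact, an irreducible smooth $G$-module need not contain a single $N$-eigenvector. Already for $\CA = \CB_{2}(\k)$ (the paper's Example~\ref{example}), where $\CI = \CJ$ and $N = P$, any eigenvector in $V \cong \CInd^{G}_{G_{\vartheta}}(\C_{\tau})$ for a character $\chi \in N^{\circ}$ would have to be supported on a single coset $G_{\vartheta}g_{0}$, which is impossible for a function invariant under a compact open subgroup because $G_{\vartheta}$ is not open in $G$; hence $V_{\chi} = \{0\}$ for every $\chi$, even though the spectral support is the full nontrivial orbit. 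The correct objects are the coinvariant quotients $V_{\chi} = V\slash V(\chi)$ together with Rodier's theorem for locally closed orbits (this is exactly what Lemmas~\ref{spec1}--\ref{orbit2} of the paper supply), and with that replacement your $i \neq j$ case does go through, since the stabiliser $\set{g \in G}{g_{i} = g_{j}}$ is visibly the unit group of a codimension-one subalgebra.

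The more serious gap is the ``diagonal'' case. Your proposed reduction to local algebras is not available: there are indecomposable split basic algebras with several idempotents whose socle lies entirely in $\bigoplus_{k} e_{k}\CA e_{k}$ --- for instance the quotient of the path algebra of the quiver with two vertices and arrows $a\colon 1 \to 2$, $b\colon 2 \to 1$ by all paths of length $\geq 3$, where $\CJ^{2} = \k\, ab \oplus \k\, ba$ and every one-dimensional sub-bimodule of $\CJ^{2}$ has $i = j$; this algebra is not a product of local algebras, so no block decomposition removes the central case. Moreover, even granting a reduction to $\CD = \k$, the phrase ``combine Boyarchenko's theorem with Clifford theory to absorb the torus'' is precisely where the substance of the theorem lies: one must prove that the stabiliser in $G$ of the relevant character of an ideal subgroup (in the paper, $Q = 1+\CL$ with $\CJ^{n} \subseteq \CL \subseteq \CJ^{n-1}$) is, possibly after conjugation, the unit group of a subalgebra of $\CA$ --- a priori the stabiliser of such data inside $T$ or $\k^{\times}$ could be an arbitrary closed subgroup, and the whole of Lemma~\ref{ideal} and Propositions~\ref{extension} and~\ref{extension'} (the analysis of $\CJ_{\varsigma}$ and the conjugation argument inside the mirabolic group of $\GL_{2}(\k)$) exists to rule this out. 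Your outline offers no substitute for this step, so the induction does not close in the central case.
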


We should mention that the analogous result has been proved by Z. Halasi in the case where $\k$ is a finite field (see \cite[Theorem~1.3]{Halasi2006a}), and this result extends a previous result (see \cite[Theorem~1.2]{Halasi2004a}) for arbitrary algebra groups over finite fields. More generally, E. Gutkin in \cite{Gutkin1974a} claimed that every unitary irreducible representation of an algebra group over a locally compact self-dual field is induced (in the sense of unitary representations) by a unitary character of some algebra subgroup; this statement obviously includes the case of an algebra group over a finite field, and the proof of it was shown to be defective by I.M. Isaacs in \cite[Section~10]{Isaacs1995a}. However, the theorem by Z. Halasi has been successfully generalised to the general situation by M. Boyarchenko in the paper \cite{Boyarchenko2011a}; in particular, \cite[Theorem~1.3]{Boyarchenko2011a}) asserts that every irreducible smooth representation of an algebra group over any non-Archimedean local field $\k$ is admissible and unitarisable. In general, this is not true for the unit group of an arbitrary split basic $\k$-algebra. By the way of example, the multiplicative group $\k^{\times}$ of a non-Archimedean local field has smooth characters which are not unitarisable; as we mentioned above, the absolute value of $\k$ defines a smooth character of $\k^{\times}$ whose image does not lie in the unit circle $\S^{1}$. On the other hand, the following example shows that there are irreducible smooth representations which are not admissible.

\begin{example} \label{example}
Let $\k$ be an arbitrary non-Archimedean local field, and let $\CA = \CB_{2}(\k)$ be the standard Borel algebra of $\CM_{2}(\k)$; hence, $G = \CA^{\times}$ is the standard Borel subgroup $B_{2}(\k)$ of $\GL_{2}(\k)$ consisting of all upper-triangular matrices. It is obvious that $G$ is the semidirect product $G = TP$ where $T \cong \k^{\times} \times \k^{\times}$ is the subgroup of $G$ consisting of all diagonal matrices and $P \cong \k^{+}$ is the abelian normal subgroup of $G$ consisting of all unipotent matrices.

We consider the conjugation action of $G$ on $P^{\circ}$: for every $g \in G$ and every $\vartheta \in P^{\circ}$, we define $\vartheta^{g} \in P^{\circ}$ by $\vartheta^{g}(x) = \vartheta(gxg^{-1})$ for all $x \in P$. It is clear that there are exactly two $G$-orbits on $P^{\circ}$, namely: the singleton $\{1_{P}\}$ consisting of the trivial character of $P$, and its complement $P^{\circ} \setminus \{1_{P}\}$. Both these $G$-orbits are locally closed, and thus by \cite[Corollaire~2 au Th\'eor\`eme~3]{Rodier1977a} there are two distinct families of irreducible smooth $G$-modules each one corresponding to one of these two $G$-orbits. On the one hand, one has the family consisting of one-dimensional $G$-modules corresponding to the smooth characters $\vartheta \in G^{\circ}$ which satisfy $P \subseteq \ker(\vartheta)$; indeed, $P$ lies is the kernel of every smooth character of $G$.

On the other hand, there is a family corresponding to a fixed non-trivial smooth character $\vartheta \in P^{\circ}$. In this case, the centraliser\footnote{Throughout the paper, whenever a group $G$ acts on a set $\Omega$, we denote by $G_{\omega}$ the centraliser of $\omega \in \Omega$ in $G$.} $G_{\vartheta}$ is the (internal) direct product $G_{\vartheta} = ZP$ where $Z = Z(G)$ is the center of $G$; notice that $Z \cong \k^{\times}$, whereas $P \cong \k^{+}$ (hence, $G_{\vartheta} \cong \k^{\times} \times \k^{+}$). Since $G_{\vartheta}$ is abelian, every irreducible smooth $G_{\vartheta}$-module is one-dimensional. By Rodier's result, for every smooth character $\tau \in (G_{\vartheta})^{\circ}$ satisfying $\tau_{P} = \vartheta$, the c-induced smooth $G_{\vartheta}$-module $\CInd^{G}_{G_{\vartheta}}(\C_{\tau})$ is irreducible (and clearly of dimension $\geq 2$); moreover, the mapping $\tau \mapsto \CInd_{G_{\vartheta}}^{G}(\C_{\tau})$ defines a one-to-one correspondence between smooth characters of $G_{\vartheta}$ satisfying $\tau_{P} = \vartheta$ and irreducible smooth $G$-modules with dimension $\geq 2$.

Now, let $\CO$ denote the ring of algebraic integers of $\k$, let $$T_{0} = \dset{\begin{bmatrix} \alpha & 0 \\ 0 & \beta \end{bmatrix}}{\alpha,\beta \in \CO^{\times}}$$ where $\CO^{\times}$ denote the unit group of $\CO$, and consider the subgroup $G_{0} = T_{0}G_{\vartheta}$ of $G$; notice that $G_{0}$ is an open (hence, also a closed) subgroup of $G$. Let $\tau \in (G_{\vartheta})^{\circ}$ be an arbitrary smooth character of $G_{\vartheta}$ satisfying $\tau_{P} = \vartheta$, and consider the c-induced $G$-module $V = \CInd_{G_{\vartheta}}^{G}(\C_{\tau})$. By the transitivity of c-induction (see \cite[Proposition~2.25(b)]{Bernstein1976a}), we have $V = \CInd_{G_{0}}^{G}(V_{0})$ where $V_{0} = \CInd^{G_{0}}_{G_{\vartheta}}(\C_{\tau})$. Since $V$ is irreducible, it is obvious that $V_{0}$ is an irreducible smooth $G_{0}$-module. Since $G_{0}$ is an open subgroup of $G$, it follows from \cite[Lemma~2.5]{Bushnell2006a} that $V_{0}$ is naturally embedded as a vector subspace of $V$ and that we have a direct sum decomposition $$V = \bigoplus_{g \in \CG} gV_{0}$$ where $\CG$ is a complete set of representatives of the coset space $G\slash G_{0}$.

Let $K$ be a sufficiently small open compact subgroup of $G_{0}$ such that $V_{0}^{\;K} \neq \{0\}$ (hence, $V^{K} \neq \{0\}$), and let $$g_{m} = \begin{bmatrix} \varpi^{m} & 0 \\ 0 & \varpi^{-m} \end{bmatrix},\qquad m \in \N,$$ where $\varpi \in \CO$ is the uniformiser of $\k$ (so that $\varpi\CO$ is the unique maximal ideal of $\CO$). For every $m \in \N$, we have $g_{m} \notin G_{0}$ and $g_{m}Kg_{m}^{-1} \subseteq K$. It follows that $(g_{m}V_{0})^{K} = g_{m} V_{0}^{\;K} \neq \{0\}$, and thus $$V^{K} = \bigoplus_{g \in \CG} (gV_{0})^{K}$$ has infinite dimension. Therefore, the smooth $G$-module $V$ is not admissible, and hence it follows from Rodier's theorem that an irreducible smooth $G$-module is admissible if and only if it is one-dimensional.
\end{example}

The proof of \reft{main} relies on a refinement of the general techniques used by M. Boyarchenko in the paper \cite{Boyarchenko2011a}.

\begin{notation}
Henceforth, we fix the following notation which we will use repeatedly, without always recalling their meaning.
\begin{itemize}
\item $\k$ is a non-Archimedean local field.
\item $\CA$ is a finite-dimensional split basic $\k$-algebra.
\item $G = G(\CA)$ is the unit group of $\CA$.
\item $\CJ = \CJ(\CA)$ is the Jacobson radical of $\CA$, and $P = 1+\CJ$;
\item $\CD$ is the diagonal subalgebra of $\CA$, and $T = \CD^{\times}$ is the diagonal subgroup of $G$.
\end{itemize}
\end{notation}

We recall that $G$ is a second countable $\ell$-group, and that $P$ is a normal $\ell_{c}$-subgroup of $G$; moreover, $G$ is the semidirect product $G = TP$. We also recall that by an algebra subgroup of $G$ we mean a subgroup of $G$ of the form $Q = 1+\CJ$ where $\CJ$ is a (nilpotent) subalgebra of $\CJ(\CA)$ (hence, $Q \subseteq P$), whereas an ideal subgroup of $G$ is an algebra subgroup $Q = 1+\CJ$ where $\CJ \subseteq \CJ(\CA)$ is an ideal of $\CA$ (in particular, every ideal subgroup of $G$ is a normal subgroup of $G$).

Let $Q$ be an arbitrary algebra subgroup of $G$ (hence, $Q$ is a subgroup of $P$), and let $W$ be an arbitrary smooth $Q$-module. For every smooth character $\vartheta \in Q^{\circ}$, we denote by $W(\vartheta)$ the vector subspace of $W$ linearly spanned by vectors $xw-\vartheta(x)w$ for $x \in Q$ and $w \in W$, and consider the quotient $W_{\vartheta} = W\slash W(\vartheta)$; notice that $W_{\vartheta}$ is the largest quotient of $W$ where $Q$ acts via the character $\vartheta$. We define the \textit{spectral support} of $W$ to be the subset $$\spec_{Q}(W) = \set{\vartheta \in Q^{\circ}}{W_{\vartheta} \neq \{0\}}$$ of $Q^{\circ}$; we recall that, since $Q$ is an $\ell_{c}$-group, $Q^{\circ}$ equals the Pontryagin dual of $Q$. If $V$ is an arbitrary smooth $G$-module, then $V$ is also a smooth $Q$-module, and thus we may define the quotient $V_{\vartheta} = V\slash V(\vartheta)$ as above; in this case, we refer to $\spec_{Q}(V)$ as the \textit{spectral support of $V$ with respect to $Q$}. In the case where $Q$ is an ideal subgroup of $G$, then $Q$ is a normal subgroup, and hence $G$ acts by conjugation on $Q^{\circ}$: for every $g \in G$ and every $\vartheta \in Q^{\circ}$, we define $\vartheta^{g} \in Q^{\circ}$ by $\vartheta^{g}(x) = \vartheta(gxg^{-1})$ for all $x \in Q$. We also observe that $V_{\vartheta}$ is a smooth $G_{\vartheta}$-module which satisfies $$x\overline{v} = \vartheta(x)\overline{v},\qquad x \in Q,\ \overline{v} \in V_{\vartheta}$$ (that is, the restriction $\Res^{G_{\vartheta}}_{Q}(V_{\vartheta})$ of $V_{\vartheta}$ to $Q$ is \textit{isotypic} of type $\vartheta$). In this situation, the following auxiliary result will be important for us; for any subgroup $H$ of $G$, we will denote by $\overline{[H,H]}$ the closure of the commutator subgroup $[H,H]$ of $H$.

\begin{lemma} \label{spec1}
Let $V$ be an arbitrary smooth $G$-module, and let $Q$ be an ideal subgroup of $G$. Then, $\spec_{Q}(V)$ is a $G$-invariant subset of $Q^{\circ}$ and $$V_{0} = \bigcap_{\vartheta \in \spec_{Q}(V)} V(\vartheta)$$ is a $G$-submodule of $V$ (that is, a $G$-invariant vector subspace of $V$). In particular, if $V$ is irreducible and $\spec_{Q}(V)$ is non-empty, then $\overline{[Q,Q]}$ acts trivially on $V$, and hence $V$ becomes naturally as an irreducible smooth $\big( G \slash \overline{[Q,Q]}\big)$-module.
\end{lemma}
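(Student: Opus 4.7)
The key computation is to understand how the $G$-action permutes the subspaces $V(\vartheta)$. Fix $g \in G$ and $\vartheta \in Q^{\circ}$. For $x \in Q$ and $w \in V$, using that $Q$ is normal in $G$, I would compute
\[
g\bigl(xw - \vartheta(x)w\bigr) = (gxg^{-1})(gw) - \vartheta(x)(gw) = y(gw) - \vartheta^{g^{-1}}(y)(gw),
\]
where $y = gxg^{-1} \in Q$, since $\vartheta(x) = \vartheta(g^{-1}yg) = \vartheta^{g^{-1}}(y)$. This shows $g \cdot V(\vartheta) \subseteq V(\vartheta^{g^{-1}})$, and applying the same argument to $g^{-1}$ gives the reverse inclusion, hence $g \cdot V(\vartheta) = V(\vartheta^{g^{-1}})$. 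Consequently the linear automorphism $v \mapsto gv$ descends to an isomorphism $V_{\vartheta} \cong V_{\vartheta^{g^{-1}}}$, so $\vartheta \in \spec_{Q}(V)$ if and only if $\vartheta^{g^{-1}} \in \spec_{Q}(V)$. This proves the $G$-invariance of $\spec_{Q}(V)$.

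For the second assertion, the $G$-invariance of $V_{0}$ follows at once: if $v \in V_{0}$ and $g \in G$, then for every $\vartheta \in \spec_{Q}(V)$ we have $v \in V(\vartheta^{g})$ (since $\vartheta^{g} \in \spec_{Q}(V)$ by the previous step), and therefore $gv \in g \cdot V(\vartheta^{g}) = V(\vartheta)$; as this holds for every $\vartheta \in \spec_{Q}(V)$, we conclude $gv \in V_{0}$.

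Now assume $V$ is irreducible and $\spec_{Q}(V) \neq \emptyset$. Pick any $\vartheta \in \spec_{Q}(V)$; by definition $V_{\vartheta} \neq \{0\}$, so $V(\vartheta) \subsetneq V$, and therefore $V_{0} \subseteq V(\vartheta) \subsetneq V$. Since $V_{0}$ is a $G$-submodule of the irreducible module $V$, it must be zero. The next observation is that every smooth character $\vartheta \in Q^{\circ}$, being a continuous homomorphism into the abelian group $\C^{\times}$, has kernel containing $[Q,Q]$, and the kernel is closed, so it contains $\overline{[Q,Q]}$. Hence for every $x \in \overline{[Q,Q]}$ and every $v \in V$ we have
\[
xv - v = xv - \vartheta(x)v \in V(\vartheta) \quad \text{for all } \vartheta \in \spec_{Q}(V),
\]
so $xv - v \in V_{0} = \{0\}$. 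Thus $\overline{[Q,Q]}$ acts trivially on $V$.

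Finally, since $Q$ is normal in $G$, the characteristic subgroup $[Q,Q]$ is normal in $G$, and its closure $\overline{[Q,Q]}$ is likewise normal in $G$ (the closure of a normal subgroup of a topological group is normal). Therefore $V$ descends to a module for the quotient $G\slash\overline{[Q,Q]}$, which inherits irreducibility from $V$ and smoothness from the fact that the quotient map $G \to G\slash\overline{[Q,Q]}$ is continuous and open. The only subtlety worth double-checking is the sign/side conventions in the conjugation action (ensuring $g \cdot V(\vartheta) = V(\vartheta^{g^{-1}})$ rather than $V(\vartheta^{g})$), but this is a routine bookkeeping step once one fixes the convention $\vartheta^{g}(x) = \vartheta(gxg^{-1})$ used in the paper.
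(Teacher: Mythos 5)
Your proof is correct and takes essentially the same route as the paper's: the conjugation identity $gV(\vartheta) = V(\vartheta^{g^{-1}})$ (the paper records the equivalent statement $V(\vartheta^{g}) = gV(\vartheta)$, quantified over all $g \in G$), irreducibility forcing $V_{0} = \{0\}$, and the inclusion $\overline{[Q,Q]} \subseteq \ker(\vartheta)$ for every $\vartheta \in Q^{\circ}$; your write-up merely makes the paper's one-line observations explicit. Your bookkeeping of the side of conjugation is consistent with the convention $\vartheta^{g}(x) = \vartheta(gxg^{-1})$, so no correction is needed.
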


\begin{proof}
For the first assertion, it is enough to observe that $V(\vartheta^{g}) = g V(\vartheta)$ for all $\vartheta \in Q^{\circ}$ and all $g \in G$. For the second assertion, we start by observing that $V_{0} \neq V$ (otherwise, $V(\vartheta) = V$ for all $\vartheta \in \spec_{Q}(V)$), and thus $V_{0} = \{0\}$ (because $V$ is irreducible). It follows that the natural linear map $$V \longrightarrow \prod_{\vartheta \in \spec_{Q}(V)} V_{\vartheta}$$ is injective, and thus $\overline{[Q,Q]}$ acts trivially on $V$ (because $\overline{[Q,Q]} \subseteq \ker(\vartheta)$ for all $\vartheta \in Q^{\circ}$).
\end{proof}

As a consequence of \cite[Corollaire~1 au Th\'eor\`eme~3]{Rodier1977a}, we conclude that $\spec_{Q}(V) = \vartheta^{G}$ whenever $Q$ be an ideal subgroup of $G$ and $V$ is an irreducible smooth $G$-module such that $\spec_{Q}(V) \neq \emptyset$; indeed, we have the following result.

\begin{lemma} \label{orbit}
 Let $Q$ be an ideal subgroup of $G$, and let $\vartheta\in Q^{\circ}$. Then, the $G$-orbit $\vartheta^{G} = \set{\vartheta^{g}}{g \in G}$ is a locally closed subset of $Q^{\circ}$.
\end{lemma}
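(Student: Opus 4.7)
The plan is to decompose $G = TP$ and to analyze the $T$- and $P$-actions on $Q^{\circ}$ separately. Since $Q$ is normal in $G$ and $G = TP$, every $g \in G$ decomposes as $g = tp$ with $t \in T$, $p \in P$, and the identity $\vartheta^{tp} = (\vartheta^{t})^{p}$ yields
\[
\vartheta^{G} = \bigcup_{t \in T}(\vartheta^{t})^{P} = T \cdot \vartheta^{P},
\]
where $T \cdot X$ denotes the $T$-saturation (for the conjugation action of $T$ on $Q^{\circ}$) of a subset $X \subseteq Q^{\circ}$.

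The key first step would be to show that every $P$-orbit in $Q^{\circ}$ is closed. This is a Kirillov-type statement for the algebra group $P = 1 + \CJ$ (which plays the role of a unipotent $p$-adic group) acting by conjugation on the Pontryagin dual of its normal algebra subgroup $Q = 1 + \CI$. I would argue by induction on $\dim_{\k} \CI$, the case $\CI = 0$ being trivial. For the inductive step, using parts (i)--(iii) of \refl{bimodule} I would pick a one-dimensional $\CD$-sub-bimodule $\CM \subseteq \CI$ that is also a two-sided ideal of $\CA$, and split into two cases according to whether $\vartheta|_{1 + \CM}$ is trivial or not. In the trivial case, $\vartheta$ factors through $Q / (1 + \CM) = 1 + (\CI / \CM)$, and the inductive hypothesis applies directly. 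In the non-trivial case, the $P$-stabilizer of $\vartheta|_{1 + \CM}$ is of the form $1 + \CI'$ for a proper sub-bimodule $\CI' \subsetneq \CI$; the inductive hypothesis gives closedness of the relevant orbit inside $(1 + \CI')^{\circ}$, and a Mackey-style argument recovers the $P$-orbit in $Q^{\circ}$ as the image of a continuous closed map, hence still closed.

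The complementary ingredient is that $T \cong (\k^{\times})^{n}$ acts on $Q^{\circ}$ algebraically: by \refl{bimodule}(i), $\CI = \bigoplus_{i, j} e_{i} \CI e_{j}$ is a direct sum of $T$-weight spaces, with weight $(t_{1}, \ldots, t_{n}) \mapsto t_{i} t_{j}^{-1}$ on the $(i, j)$-summand, and this structure lifts to a weight-type $T$-action on $Q^{\circ}$. Since orbits of a split torus acting via characters on a locally compact abelian group are locally closed, the $T$-saturation $T \cdot \vartheta^{P}$ of the closed set $\vartheta^{P}$ is locally closed in $Q^{\circ}$, which finishes the proof.

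The main obstacle is the first step, that is, rigorously establishing closedness of the $P$-orbits. This requires a careful Kirillov-style analysis over the non-Archimedean local field $\k$, in particular to ensure that both the group-theoretic descent and the topological closedness persist through each inductive reduction; identifying the dual of the one-dimensional ideal subgroup $1 + \CM$ with (a quotient of) $\k^{\circ}$ via the fixed additive character of $\k$ is a delicate point.
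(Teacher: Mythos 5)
Your route is genuinely different from the paper's, and as written it has two real gaps. The first lies in your key step, the closedness of every $P$-orbit in $Q^{\circ}$. This Kirillov-type statement is the heart of your argument, but the inductive sketch does not hold together as stated: the $P$-stabiliser of $\vartheta|_{1+\CM}$ is a subgroup of $P = 1+\CJ$, hence of the form $1+\CJ'$ with $\CJ' \subseteq \CJ$ (as in \refl{vphi}), and is in general much larger than any $1+\CI'$ with $\CI' \subsetneq \CI$; since your induction runs on $\dim_{\k}\CI$, it is unclear which parameter drops when you replace $P$ by this stabiliser, and the assertion that a ``Mackey-style argument'' exhibits $\vartheta^{P}$ as the image of a continuous \emph{closed} map is precisely the point that would need proof. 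Nothing of this generality is proved (or needed) in the paper: closedness of a $P$-orbit is obtained only later, in \refp{extension}, and only for the very particular ideal subgroups $Q = 1+\CL$ with $\CL\slash\CJ^{n}$ one-dimensional, where the orbit is identified with the set of all extensions of a fixed $P$-invariant character of $1+\CJ^{n}$.

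The second gap is the final inference. Even granting that every $T$-orbit in $Q^{\circ}$ is locally closed, it does not follow that the saturation $T\cdot\vartheta^{P}$ of a closed set is locally closed: a union of closed (or locally closed) sets indexed by $T$ need not be locally closed, and you supply no mechanism (finiteness of the weights together with some algebraicity or properness of the orbit map) controlling this union. The preliminary identification of $Q^{\circ}$ with a $\k$-vector space carrying a weight-type $T$-action is also unjustified when $Q$ is non-abelian, since $Q^{\circ}$ is the dual of $Q\slash\overline{[Q,Q]}$, which need not be a vector group. The paper's own proof avoids all of this structure theory: since $G$ is second countable it is $\sigma$-compact, so the orbit map identifies $\vartheta^{G}$ with the homogeneous space $G\slash G_{\vartheta}$ (citing \cite[Proposition~2.44]{Folland2016a}); being locally compact, $\vartheta^{G}$ is then open in its closure, that is, locally closed. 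No use is made of the decomposition $G = TP$, of closedness of $P$-orbits, or of the algebra structure, so even a completed version of your argument would be proving considerably more than the lemma asserts, using machinery that in the paper only becomes available afterwards.
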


\begin{proof}
Since $G$ is a second countable locally compact group, it is $\sigma$-compact (that is, a union of countably many compact subspaces), and thus $\vartheta^{G}$ is naturally homeomorphic to the quotient space $G\slash G_{\vartheta}$ (see \cite[Proposition~2.44]{Folland2016a}). Since $G\slash G_{\vartheta}$ is a locally compact space, we conclude that $\vartheta^{G}$ is also locally compact, and thus it is open in its closure $\overline{\vartheta^{G}}$ in $Q^{\circ}$.
\end{proof}

\begin{lemma} \label{orbit2}
Let $V$ be an irreducible smooth $G$-module, let $Q$ be an ideal subgroup of $G$, and let $\vartheta\in Q^{\circ}$ be such that $V_{\vartheta} \neq \{0\}$. Then, $\spec_{Q}(V) = \vartheta^{G}$; moreover, $V_{\vartheta}$ is an irreducible smooth $G_{\vartheta}$-module and $V \cong \CInd^{G}_{G_{\vartheta}}(V_{\vartheta})$.
\end{lemma}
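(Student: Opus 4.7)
The plan is to deduce all three conclusions at once by invoking Rodier's theorem \cite[Corollaire 1 au Th\'eor\`eme 3]{Rodier1977a}, which provides a Mackey-type correspondence for irreducible smooth modules over an $\ell$-group associated to a locally closed orbit in the Pontryagin dual of a closed normal subgroup. The groundwork for this application has been laid in the two preceding lemmas, and the proof essentially amounts to checking that the hypotheses of Rodier's theorem are met.

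First I will verify those hypotheses. The ideal subgroup $Q = 1+\CI$ is closed in $G$, because $\CI$ is a finite-dimensional $\k$-subspace of $\CJ(\CA)$ and hence closed; it is normal because $\CI$ is a two-sided ideal of $\CA$. By \refl{orbit}, the $G$-orbit $\vartheta^{G}$ is a locally closed subset of $Q^{\circ}$; and by hypothesis $V_{\vartheta} \neq \{0\}$, so $\vartheta \in \spec_{Q}(V)$.

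With these ingredients in place, Rodier's theorem supplies a bijection (given by smooth compact induction, with inverse $V \mapsto V_{\vartheta}$) between the isomorphism classes of irreducible smooth $G$-modules whose spectral support with respect to $Q$ is contained in $\vartheta^{G}$, and the isomorphism classes of irreducible smooth $G_{\vartheta}$-modules whose restriction to $Q$ is $\vartheta$-isotypic. Applied to our $V$, this immediately yields that $V_{\vartheta}$ is an irreducible smooth $G_{\vartheta}$-module and that $V \cong \CInd^{G}_{G_{\vartheta}}(V_{\vartheta})$. For the equality $\spec_{Q}(V) = \vartheta^{G}$, the inclusion $\vartheta^{G} \subseteq \spec_{Q}(V)$ follows from the $G$-invariance of the spectral support established in \refl{spec1}, while the reverse inclusion is either read off from the correspondence itself or checked directly from the structure of $\CInd^{G}_{G_{\vartheta}}(V_{\vartheta})$, whose restriction to $Q$ decomposes into isotypic components supported on $\vartheta^{G}$.

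The main (and in truth, only) obstacle is a bookkeeping one: matching the formulation of Rodier's theorem with the ``spectral support'' language used here. However, the isotypic reformulation appearing in the observation just before \refl{spec1}, together with the local closedness established in \refl{orbit}, are precisely what is required to apply Rodier's result as a black box, so no further technical work is needed.
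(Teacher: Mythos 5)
There is a genuine gap: you apply Rodier's theorem as a black box to the pair $(G,Q)$, but the hypotheses of \cite[Th\'eor\`eme~3]{Rodier1977a} and its corollaries require the closed normal subgroup to be \emph{commutative} (an abelian $\ell_{c}$-group), and an ideal subgroup $Q = 1+\CI$ of $G$ is in general not abelian. This is not a matter of bookkeeping or of translating the ``spectral support'' language: for a nonabelian $Q$ the set $Q^{\circ}$ of smooth characters does not control the restriction $\Res^{G}_{Q}(V)$, and Rodier's spectral decomposition over $Q^{\circ}$ simply does not apply. The substantive content of the paper's proof is precisely the reduction that makes commutativity hold: since $V_{\vartheta} \neq \{0\}$, the spectral support $\spec_{Q}(V)$ is non-empty, so \refl{spec1} (using irreducibility of $V$) shows that $\overline{[Q,Q]}$ acts trivially on $V$; one then replaces $G$ by $\overline{G} = G\slash\overline{[Q,Q]}$ and $Q$ by the abelian normal subgroup $\overline{Q} = Q\slash\overline{[Q,Q]}$, checks via \refl{orbit} that every $\overline{G}$-orbit in $\overline{Q}^{\,\circ} \cong Q^{\circ}$ is locally closed, applies Corollaires~1 and~2 of Rodier's Th\'eor\`eme~3 to $(\overline{G},\overline{Q})$, and finally transports the conclusions back using $\vartheta^{G} = \vartheta^{\overline{G}}$, $\overline{[Q,Q]} \subseteq G_{\vartheta}$ and $\overline{G}_{\vartheta} = G_{\vartheta}\slash\overline{[Q,Q]}$. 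Your proposal never performs (nor mentions the need for) this reduction, so the hypotheses you claim to have verified are not in fact the hypotheses of the theorem you invoke.

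A secondary, more minor imprecision: Rodier's result is applied under the hypothesis that the orbits in the dual are locally closed, not merely the single orbit $\vartheta^{G}$; this is harmless here because \refl{orbit} gives local closedness for every character, but your verification as written only addresses the one orbit. Once the passage to $\overline{G}$ and $\overline{Q}$ is inserted, the rest of your argument (irreducibility of $V_{\vartheta}$ as a $G_{\vartheta}$-module, $V \cong \CInd^{G}_{G_{\vartheta}}(V_{\vartheta})$, and $\spec_{Q}(V) = \vartheta^{G}$ via $G$-invariance from \refl{spec1}) goes through exactly as in the paper.
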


\begin{proof}
We consider the closure $\overline{[Q,Q]}$ of the commutator subgroup of $Q$ and the quotient group $\overline{G} = G \slash \overline{[Q,Q]}$; then, $\overline{G}$ is an $\ell$-group, and $\overline{Q} = Q\slash \overline{[Q,Q]}$ is an abelian normal subgroup of $\overline{G}$. By \refl{spec1}, $V$ is an irreducible smooth $\overline{G}$-module; moreover, \refl{orbit} clearly implies that every $\overline{G}$-orbit is a locally closed subset of $\overline{Q}^{\,\circ} \cong Q^{\circ}$. Therefore, it follows from \cite[Corollaire~1 au Th\'eor\`eme~3]{Rodier1977a} that $\spec_{\overline{Q}}(V) = \vartheta^{\overline{G}}$ and that $V_{\vartheta}$ is an irreducible smooth $\overline{G}_{\vartheta}$-module; moreover, \cite[Corollaire~2 au Th\'eor\`eme~3]{Rodier1977a} implies that $V \cong \CInd^{\overline{G}}_{\overline{G}_{\vartheta}}(V_{\vartheta})$. The result follows because $\vartheta^{G} = \vartheta^{\overline{G}}$, $\overline{[Q,Q]} \subseteq G_{\vartheta}$ and $\overline{G}_{\vartheta} = G_{\vartheta}\slash \overline{[Q,Q]}$.
\end{proof}

Our next aim is to prove that, for every irreducible smooth $G$-module $V$, there exists an ideal subgroup $Q$ of $G$ (depending on $V$) such that $\spec_{Q}(V) \neq \emptyset$; the existence of the ideal subgroup $Q$ will be established using a slightly modified key construction due to M. Boyarchenko (see \cite[Section~5.2]{Boyarchenko2011a}). We start by proving the following auxiliary result.

\begin{lemma} \label{restriction}
Let $V$ be an irreducible smooth $G$-module, and let $Q$ be an arbitrary algebra subgroup of $G$. Then, the smooth $Q$-module $\Res^{G}_{Q}(V)$ has an irreducible quotient.
\end{lemma}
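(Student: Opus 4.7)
The plan is to construct a maximal proper $Q$-submodule of $V$ via a two-stage Zorn's-lemma argument combining a cyclic reduction with a lifting step; the corresponding quotient will be the desired irreducible smooth $Q$-module. Since $V$ is smooth as a $G$-module and $Q \leq G$ is closed, $\Res^{G}_{Q}(V)$ is smooth. First, pick any nonzero $v_{0} \in V$ together with a compact open subgroup $K \leq Q$ fixing $v_{0}$ (existence by smoothness), and consider the cyclic $Q$-submodule $N := \C[Q] v_{0} \subseteq V$. The poset of proper $Q$-submodules of $N$ is closed under unions of chains, because no chain of such submodules can contain $v_{0}$ in its union (otherwise some term would equal $N$). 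Zorn's lemma thus produces a maximal proper $Q$-submodule $N' \subsetneq N$, and $L := N/N'$ is an irreducible smooth $Q$-module.

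To lift $L$ to a quotient of $V$, consider the poset
\[
\mathcal{T} := \set{M \subseteq V}{M \text{ is a } Q\text{-submodule with } M \cap N = N'},
\]
which is nonempty (it contains $N'$) and closed under unions of chains, since intersection commutes with directed union. Let $M_{0} \in \mathcal{T}$ be a maximal element. The induced map $L = N/N' \hookrightarrow V/M_{0}$ realizes $L$ as a $Q$-submodule of $V/M_{0}$. Moreover, any $Q$-submodule $M \supsetneq M_{0}$ of $V$ must have $M \cap N \supsetneq N'$ by the maximality of $M_{0}$ inside $\mathcal{T}$, and then $M \cap N = N$ by the maximality of $N'$ inside $N$, so $N \subseteq M$. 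Consequently $L = (M_{0}+N)/M_{0}$ is contained in every nonzero $Q$-submodule of $V/M_{0}$, that is, $L$ is the socle of $V/M_{0}$.

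The main obstacle is to argue that this socle exhausts the quotient, i.e.\ that $V/M_{0} = L$; a priori, $V/M_{0}$ could properly contain its irreducible socle. The expected strategy is to induct on $\dim_{\k}\CU$, where $Q = 1+\CU$ for a nilpotent subalgebra $\CU \subseteq \CJ$. In the base case where $\CU$ is abelian, Pontryagin duality for the $\ell_{c}$-group $Q$ ensures that every nonzero smooth $Q$-module has a character in its spectral support, and this yields the desired one-dimensional quotient directly. In the inductive step, one uses the central abelian normal subgroup $Z := 1+\CU^{n-1}$ (with $\CU^{n}=0$) of $Q$ together with a Clifford-theoretic passage to $Q\slash Z$, parallel to the central-character analysis in Boyarchenko's treatment \cite{Boyarchenko2011a}, to reduce to an algebra subgroup of strictly smaller dimension; combined with the $G$-irreducibility of $V$, this should force $V/M_{0} = L$ and complete the proof.
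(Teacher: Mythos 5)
Your two-stage Zorn argument is correct as far as it goes, but it only re-proves the standard fact that a nonzero smooth module has an irreducible \emph{subquotient}: maximality of $M_{0}$ shows that $L=(M_{0}+N)/M_{0}$ is contained in every nonzero $Q$-submodule of $V/M_{0}$, i.e.\ $L$ is an essential socle, and nothing in the construction makes $V/M_{0}$ itself irreducible. The whole content of the lemma is exactly the passage from ``irreducible subquotient'' to ``irreducible quotient'', and that is where your plan stops being a proof: the concluding claim that an induction on $\dim\CU$ with a Clifford-theoretic reduction ``should force $V/M_{0}=L$'' is not an argument. For a general $\ell$-group a smooth module with irreducible essential socle need not be irreducible, so some special property of algebra subgroups must enter. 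The property the paper uses is precisely Boyarchenko's theorem that every irreducible smooth module of an algebra group over a non-Archimedean local field is admissible \cite[Theorem~1.3]{Boyarchenko2011a}, together with \cite[Corollary~4.8]{Boyarchenko2011a}, which converts that admissibility statement into the existence of an irreducible quotient of an arbitrary nonzero smooth $Q$-module. Your sketch would in effect have to re-prove this deep admissibility input from scratch, and no indication is given of how the induction would do so.

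Two further points in the sketch are shaky. First, in the inductive step the central subgroup $Z=1+\CU^{n-1}$ acts by a character only on the irreducible subquotient $L$ (by Schur), not on $V/M_{0}$ or on $V$, so there is no immediate passage of the ambient module to a $Q/Z$-module; one would have to work with coinvariants and control their nonvanishing, which is again the delicate part of Boyarchenko's analysis. Second, the appeal to ``the $G$-irreducibility of $V$'' cannot carry any weight: the paper's proof uses only that $\Res^{G}_{Q}(V)\neq\{0\}$ (indeed the same fact is later applied to modules such as $\Ind^{Q}_{N}(\C_{\varsigma})$ that are not restrictions of irreducibles), and irreducibility of $V$ over $G$ gives essentially no control over the lattice of $Q$-submodules of $V$. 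The base-case assertion for abelian $\CU$ (nonemptiness of the spectral support of any nonzero smooth module) is true, but it too is asserted rather than proved; the genuine gap, however, is the unproved inductive step.
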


\begin{proof}
Since $Q$ is an $\ell_{c}$-group, every irreducible smooth $Q$-module is admissible (by \cite[Theorem~1.3]{Boyarchenko2011a}), and thus $\Res^{G}_{Q}(V)$ has an irreducible quotient (by \cite[Corollary~4.8]{Boyarchenko2011a}).
\end{proof}

Next, we deal with the case where $\Res^{G}_{P}(V)$ has a one-dimensional quotient.%; we recall that, since $P$ is a normal subgroup of $G$, the group $G$ acts by conjugation on $P^{\circ}$: for every $g \in G$ and every $\vartheta \in P^{\circ}$, we define $\vartheta^{g} \in P^{\circ}$ by $\vartheta^{g}(gxg^{-1}) = \vartheta(x)$ for all $x \in P$.

\begin{lemma} \label{linear1}
Let $V$ be an irreducible smooth $G$-module, and let $W$ be an irreducible quotient of $\Res^{G}_{P}(V)$. Suppose that $W$ is one-dimensional, and let $\vartheta \in P^{\circ}$ be the character afforded by $W$. Then, $\vartheta$ is $G$-invariant if and only if $V$ is one-dimensional.
\end{lemma}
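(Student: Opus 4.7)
The forward implication is essentially formal. If $V$ is one-dimensional it affords a smooth character $\chi \in G^{\circ}$; then $\Res^{G}_{P}(V)$ is itself one-dimensional and therefore coincides with its unique irreducible quotient $W$, so $\vartheta = \chi|_{P}$. Since $\chi$ is a homomorphism and $P$ is normal in $G$, one has $\vartheta^{g}(x) = \chi(gxg^{-1}) = \chi(x) = \vartheta(x)$ for every $g \in G$ and every $x \in P$, so $\vartheta$ is $G$-invariant.

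For the reverse implication, assume $G_{\vartheta} = G$. Since $W$ is a non-zero $P$-equivariant quotient of $V$ on which $P$ acts via $\vartheta$, the universal property of the coinvariant quotient $V_{\vartheta}$ (the largest $P$-quotient on which $P$ acts via $\vartheta$) produces a surjection $V_{\vartheta} \twoheadrightarrow W$, so $V_{\vartheta} \neq \{0\}$. I now apply \refl{orbit2} with the ideal subgroup $Q = P$: since $G_{\vartheta} = G$, the lemma gives $V \cong \CInd^{G}_{G_{\vartheta}}(V_{\vartheta}) = V_{\vartheta}$, so $V = V_{\vartheta}$ and $P$ acts on the whole of $V$ via the single character $\vartheta$.

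The next step is to extend $\vartheta$ to a smooth character $\tilde{\vartheta} \in G^{\circ}$. Using the semidirect decomposition $G = TP$ with $T \cap P = \{1\}$, I set $\tilde{\vartheta}(tp) = \vartheta(p)$ for $t \in T$, $p \in P$. A short calculation using the $G$-invariance of $\vartheta$ (applied to conjugation by elements of $T$) shows that $\tilde{\vartheta}$ is a group homomorphism, and its kernel contains the open subgroup $T \cdot \ker(\vartheta)$, so $\tilde{\vartheta}$ is indeed a smooth character of $G$. Twisting then yields an irreducible smooth $G$-module $V' = V \otimes \C_{\tilde{\vartheta}^{-1}}$ on which $P$ acts trivially, so $V'$ descends to an irreducible smooth module for the quotient $G / P \cong T = \CD^{\times}$.

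Finally, since $\CA$ is split basic the diagonal algebra $\CD$ is a product of copies of $\k$, and hence $T \cong (\k^{\times})^{n}$ is abelian; by Jacquet's version of Schur's lemma (recalled in the discussion preceding \reft{main}), every irreducible smooth module for an abelian $\ell$-group is one-dimensional. Thus $V'$, and therefore $V$, is one-dimensional. The only point that requires any genuine verification is that $\tilde{\vartheta}$ really is a well-defined smooth character of $G$, but this is immediate from the semidirect-product structure together with the $G$-invariance hypothesis; the rest of the argument is direct application of \refl{orbit2} and the abelian case of Schur's lemma.
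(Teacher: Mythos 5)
Your proof is correct, but it takes a somewhat different route from the paper in both halves of the converse. To show that $P$ acts on all of $V$ by the character $\vartheta$, you invoke \refl{orbit2} (hence Rodier's theorem) to get $V \cong \CInd^{G}_{G_{\vartheta}}(V_{\vartheta}) = V_{\vartheta}$; the paper argues more elementarily that, because $\vartheta$ is $G$-invariant, $V(\vartheta)$ is a $G$-invariant subspace, so irreducibility together with the surjection $V_{\vartheta} \twoheadrightarrow W_{\vartheta} \cong W \neq \{0\}$ forces $V(\vartheta) = \{0\}$ --- no induction machinery needed. For the final step, you extend $\vartheta$ to a smooth character $\tilde{\vartheta}$ of $G = TP$ and twist so as to descend to the abelian quotient $G/P \cong T$, whereas the paper simply notes that, since $P$ acts by scalars, every vector subspace of $V$ is $P$-invariant, hence every $T$-submodule is a $G$-submodule, so $\Res^{G}_{T}(V)$ is irreducible and Schur's lemma for the abelian $\ell$-group $T$ applies directly. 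Your extension-and-twist argument works: $\tilde{\vartheta}(tp) = \vartheta(p)$ is multiplicative precisely because $\vartheta$ is $T$-invariant, and $T\ker(\vartheta)$ is an open subgroup of $G$ because $T$ normalises $\ker(\vartheta)$ and the multiplication map $T \times P \to G$ is a homeomorphism (so $T\ker(\vartheta)$ corresponds to the open set $T \times \ker(\vartheta)$); this last openness assertion is the one point you state without justification and should spell out, since $\ker(\vartheta)$ being open in $P$ does not by itself give openness in $G$ (as $P$ is not open in $G$). In summary, your argument is valid but uses heavier tools (Rodier via \refl{orbit2}, plus the construction of $\tilde{\vartheta}$) where the paper gets by with the $G$-invariance of $V(\vartheta)$ and restriction to $T$; the paper's version is preferable here because \refl{linear1} is meant to be a light auxiliary step feeding into \refp{main1}.
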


\begin{proof}
It is clear that $\vartheta$ is $G$-invariant whenever $V$ is one-dimensional. Conversely, suppose that $\vartheta$ is $G$-invariant. Then, $V(\vartheta)$ is a $G$-invariant vector subspace of $V$, and hence either $V(\vartheta) = \{0\}$ or $V(\vartheta) = V$ (because $V$ is irreducible). Since $P$ is an $\ell_{c}$-group, \cite[Proposition~2.35]{Bernstein1976a} implies that $W_{\vartheta} = W\slash W(\vartheta)$ is an epimorphic image of $V_{\vartheta} = V\slash V(\vartheta)$, and thus $V_{\vartheta} \neq 0$ (because $W(\vartheta) = \{0\}$, and hence $W \cong W_{\vartheta}$). Therefore, we must have $V(\vartheta) = \{0\}$, and thus $$xv = \vartheta(x)v,\qquad x \in P,\ v \in V.$$ It follows that the restriction $\Res^{G}_{T}(V)$ of $V$ to the diagonal subgroup $T$ of $G$ is irreducible; indeed, if $V'$ is $T$-submodule of $\Res^{G}_{T}(V)$, then $V'$ is also a $G$-submodule of $V$ (because every vector subspace of $V$ is $P$-invariant), and so either $V' = \{0\}$ or $V' = V$. Since $T$ is an abelian $\ell$-group, Schur's lemma implies that $V$ is one-dimensional, and this completes the proof.
\end{proof}

In the following, we let $V$ be an irreducible smooth $G$-module, and assume that $\dim V \geq 2$. Let $W$ be an irreducible quotient of $\Res^{G}_{P}(V)$. On the one hand, suppose that $W$ is one-dimensional, and let $\vartheta \in P^{\circ}$ be the character afforded by $W$. Since $\dim V \geq 2$, we have $G_{\vartheta} \neq G$ (by the previous lemma); moreover, by \refl{orbit2}, $V_{\vartheta}$ is an irreducible smooth $G_{\vartheta}$-module and $V \cong \CInd^{G}_{G_{\vartheta}}(V_{\vartheta})$. As in the proof of \refl{linear1}, we conclude that $V_{\vartheta}$ is one-dimensional; notice that $W_{\vartheta}$ is a one-dimensional irreducible quotient of $\Res^{G_{\vartheta}}_{P}(V_{\vartheta})$. Therefore, \reft{main} holds in this situation once we prove that $G_{\vartheta}$ is the unit group of some subalgebra of $\CA$; we observe that $G_{\vartheta}$ is the semidirect product $G_{\vartheta} = T_{\vartheta}P$.

\begin{proposition} \label{centraliser1}
%Let $\CD$ be the diagonal subalgebra of $\CA$, and let $T = \CD^{\times}$ be the diagonal subgroup of $G$. Then, 
For every ideal subgroup $Q$ of $G$ and every $\vartheta \in Q^{\circ}$, the centraliser $T_{\vartheta}$ is the unit group of a subalgebra of $\CD$.% In particular, if $\CJ$ is the ideal of $\CA$ such that $Q = 1+\CJ$, then $C_{T}(\vartheta)Q$ is the unit group of the subalgebra $
\end{proposition}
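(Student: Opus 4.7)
The plan is to exploit the $\CD$-bimodule decomposition $\CI = \bigoplus_{i,j} e_i \CI e_j$ furnished by \refl{bimodule} (where $Q = 1+\CI$ and $\CI \subseteq \CJ$), and to translate the stabiliser condition $\vartheta^t = \vartheta$ into scalar equations on the coordinates of $t$. Write $\CI_{ij} = e_i \CI e_j$ and observe that $\CI_{ij}\CI_{k\ell} = 0$ whenever $j \neq k$, so for $i \neq j$ the set $1+\CI_{ij}$ is an abelian subgroup of $Q$ isomorphic to the additive group $\CI_{ij}$. For $t = \sum_k \alpha_k e_k \in T$ with $\alpha_k \in \k^{\times}$, a direct computation gives $t(1+x)t^{-1} = 1 + \alpha_i\alpha_j^{-1} x$ for $x \in \CI_{ij}$; in particular, conjugation by $t$ acts trivially on $1+\CI_{ii}$.

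The next step is to show that $Q$ is generated as a group by $\bigcup_{i,j}(1+\CI_{ij})$. Given $x = \sum_{i,j} x_{ij} \in \CI$, the product $\prod_{i,j}(1+x_{ij})$ (in any fixed order) equals $1+x$ modulo $1+\CI^2$; since $e_i \CI^m e_j \subseteq \CI_{ij}$ for every $m$, a downward induction on $m$ starting from the trivial case $\CI^N = 0$ shows that $1+\CI^m$ is contained in the subgroup generated by $\bigcup_{i,j}(1+\CI_{ij})$. Consequently, any two characters of $Q$ coincide if and only if they agree on every $1+\CI_{ij}$. Writing $\lambda_{ij}$ for the character of the additive group $\CI_{ij}$ induced by $\vartheta$ (for $i \neq j$), the condition $\vartheta^t = \vartheta$ reduces to
\begin{equation*}
\lambda_{ij}\bigl((\alpha_i\alpha_j^{-1} - 1)x\bigr) = 1 \quad \text{for all } i \neq j \text{ and all } x \in \CI_{ij}.
\end{equation*}

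The crucial observation is that if $\lambda_{ij} \neq 1$ and $\beta = \alpha_i\alpha_j^{-1} - 1 \neq 0$, then $\beta \in \k^{\times}$ gives $\beta\CI_{ij} = \CI_{ij}$, which would force $\lambda_{ij}$ to be trivial, a contradiction. Hence $\vartheta^t = \vartheta$ is equivalent to $\alpha_i = \alpha_j$ for every pair $(i,j)$ with $\lambda_{ij} \neq 1$. Let $\sim$ be the equivalence relation on $\{1,\ldots,n\}$ generated by these pairs, let $I_1,\ldots,I_m$ be the resulting equivalence classes, and set $f_k = \sum_{i \in I_k} e_i$. Then $f_1,\ldots,f_m$ are nonzero pairwise orthogonal idempotents with $\sum_k f_k = 1$, so $\CB = \k f_1 \oplus \cdots \oplus \k f_m$ is a subalgebra of $\CD$, and its unit group $\CB^{\times} = \{\sum_k \beta_k f_k : \beta_k \in \k^{\times}\}$ coincides with $T_{\vartheta}$ inside $T$.

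The main obstacle is the generation step, which decouples the stabiliser condition into independent block-by-block equations indexed by the pairs $(i,j)$; once this is available, the identification of $T_{\vartheta}$ as the unit group of the diagonal subalgebra determined by the equivalence classes of $\sim$ is essentially a combinatorial matter.
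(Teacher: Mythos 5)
Your argument is correct, but it takes a genuinely different route from the paper. The paper's proof is a short, direct verification: it defines $\CD_{\vartheta} = \set{d \in \CD}{\vartheta(1+ad) = \vartheta(1+da) \all a \in \CI}$ (where $Q = 1+\CI$), checks closure under addition via the identity $a(1+da)^{-1} = (1+ad)^{-1}a$ together with multiplicativity of $\vartheta$, and observes that $(\CD_{\vartheta})^{\times} = T_{\vartheta}$; no explicit description of $T_{\vartheta}$ is extracted. You instead invoke the homogeneous decomposition $\CI = \bigoplus_{i,j} e_{i}\CI e_{j}$ of \refl{bimodule}, prove (by downward induction on the powers $\CI^{m}$, using that cross-terms land in $\CI^{2m} \subseteq \CI^{m+1}$) that $Q$ is generated by the blocks $1+\CI_{ij}$, and then decouple the stabiliser condition block by block; the scaling argument $\beta\CI_{ij} = \CI_{ij}$ for $\beta \in \k^{\times}$ correctly forces $\alpha_{i} = \alpha_{j}$ exactly on the pairs where $\lambda_{ij} \neq 1$ (and the diagonal blocks impose no condition since $T$ centralises $1+\CI_{ii}$). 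All the steps check out: the generation argument is sound, $1+\CI_{ij}$ is indeed isomorphic to $\CI_{ij}^{+}$ for $i \neq j$, and the converse direction (constancy on the equivalence classes implies $\vartheta^{t} = \vartheta$) follows because the two characters agree on a generating set. What your approach buys is an explicit structural description: $T_{\vartheta}$ is the unit group of a sum of block idempotents $\k f_{1} \oplus \cdots \oplus \k f_{m}$, so $T_{\vartheta} \cong (\k^{\times})^{m}$, which makes transparent a fact the paper only uses implicitly later (e.g.\ in the proof of Proposition \ref{extension'}, where quotients such as $T_{\varsigma}\slash T_{\vartheta}$ are identified with products of copies of $\k^{\times}$). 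The cost is length: the paper's proof avoids both the generation step and the block analysis entirely.
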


\begin{proof}
If $\CJ \subseteq \CJ(\CA)$ is an ideal of $\CA$ and $\vartheta \in (1+\CJ)^{\circ}$, then it is straightforward to check that $$\CD_{\vartheta} =  \set{d \in \CD}{\vartheta(1+ad) = \vartheta(1+da) \all a \in \CJ}$$ is a subalgebra of $\CD$ with $(\CD_{\vartheta})^{\times} = T_{\vartheta}$. By the way of example, let $d,d' \in \CD_{\vartheta}$, and let $a \in \CJ$ be arbitrary. Then, since $a(1+da)^{-1} = (1+ad)^{-1}a$, we deduce that
\begin{align*}
\vartheta(1+da+d'a) &= \vartheta(1 + d'a(1+da)^{-1}) \vartheta(1+da) \\ &= \vartheta(1 + a(1+da)^{-1}d') \vartheta(1+ad) \\ &= \vartheta(1+ad) \vartheta(1 + (1+ad)^{-1}ad') \\ &= \vartheta(1+ad+ad'),
\end{align*}
and thus $d+d' \in \CD_{\vartheta}$.% is the unit group of $C_{\CD}(\vartheta)$.%Let $\seq{e}{n} \in \CA$ be non-zero orthogonal idempotents such that $\CD = \k e_{1} \oplus \cdots \oplus \k e_{n}$. Then, $\CJ = \sum_{1 \leq i, j \leq n} e_{i}\CJ e_{j}$, and so $Q$ is generated (as an abstract group) by the set $\set{1+e_{i}ae_{j}}{a \in \CJ,\ 1 \leq i, j \leq n}$ (this simple observation is essentially \cite[Lemma~2.3]{Halasi2006a} (whose proof does dot depend on the finiteness of the field $\k$)). For any $1 \leq i,j \leq n$, we set $\CJ_{i,j} = e_{i}\CJ e_{j}$ and $Q_{i,j} = 1+\CJ_{i,j}$, and let $\map{\vartheta_{i,j}}{Q_{i,j}}{\C^{\times}}$ be the restriction of $\vartheta$ to $Q_{i,j}$; notice that $Q_{i,j}$ is a $T$-invariant algebra subgroup of $G$, and that $$(\vartheta_{i,j})^{t}(1+e_{i}ae_{j}) = \vartheta_{i,j}(1+t(e_{i}ae_{j})t^{-1}) = \vartheta_{i,j}(1+\alpha_{i}\alpha^{-1}_{j}(e_{i}ae_{j}))$$ for all $a \in \CJ$ and all $t \in T$, $t = \alpha_{1}e_{1} + \cdots + \alpha_{n}e_{n}$ for $\seq{\alpha}{n} \in \k^{\times}$. It is straightforward to check that $$C_{T}(\vartheta) = \bigcap_{1 \leq i,j \leq n} C_{T}(\vartheta_{i,j}).$$
%
%On the one hand, we clearly have $C_{T}(\vartheta_{i,i}) = T$ for all $1 \leq i \leq n$. On the other hand, let $1 \leq i \neq j \leq n$ be arbitrary. Then, since $e_{j}e_{i} = 0$, we have $(1+e_{i}ae_{j})(1+e_{i}a'e_{j})=1+e_{i}(a+a')e_{j}$ for all $a,a' \in \CJ$, and thus the mapping $a \mapsto 1+a$ defines a group isomorphism from $\CJ_{i,j} = e_{i}\CJ e_{j}$ to $Q_{i,j} = 1+\CJ_{i,j}$. Therefore, we may define a smooth character $\map{\widehat{\vartheta}_{i,j}}{\CJ_{i,j}}{\C^{\times}}$ by the rule $\widehat{\vartheta}_{i,j}(a) = \vartheta_{i,j}(1+a)$ for all $a \in \CJ_{i,j}$. Since $\CJ_{i,j}$ is $T$-invariant, $T$ acts naturally on the Pontryagin dual $(\CJ_{i,j})^{\circ}$, and we have $C_{T}(\widehat{\vartheta}_{i,j}) = C_{T}(\vartheta_{i,j})$. Since the field $\k$ is self-fual (and since $\CJ_{i,j}$ is a finite-dimensional vector space over $\k$), there exists $a \in \CJ_{i,j}$ such that $C_{T}(\widehat{\vartheta}_{i,j}) = C_{T}(a) = \set{t \in T}{tat^{-1} = a}$. To conclude the proof, it is enough to observe that $C_{T}(a)$ is the unit group of the subalgebra $C_{\CD}(a) = \set{d \in \CD}{da = ad}$ of $\CD$.
\end{proof}

As we mentioned above, this completes the proof of the following particular case of \reft{main}.

\begin{proposition} \label{main1}
Let $V$ be an irreducible smooth $G$-module, and let $W$ be an irreducible quotient of $\Res^{G}_{P}(V)$. Suppose that $W$ is one-dimensional, and let $\vartheta \in P^{\circ}$ be the character afforded by $W$. Then, $G_{\vartheta}$ is the unit group of some subalgebra of $\CA$, and $V_{\vartheta}$ is a one-dimensional smooth $G_{\vartheta}$-module such that $V \cong \CInd^{G}_{G_{\vartheta}}(V_{\vartheta})$.
\end{proposition}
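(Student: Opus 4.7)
The proposition will be obtained by assembling the material already developed; no new technique is required, only a short case split followed by a bookkeeping identification. I would first dispatch the case $\dim V = 1$ trivially: then $V$ affords a smooth character $\tau$ of $G$, the character $\vartheta$ afforded by $W$ is simply $\tau|_{P}$, so $G_{\vartheta} = G$, $V_{\vartheta} = V$, and the conclusion holds with $\CB = \CA$, since $\CInd^{G}_{G}(V) \cong V$ tautologically.

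For the main case $\dim V \geq 2$, \refl{linear1} guarantees that $\vartheta$ is not $G$-invariant, so $G_{\vartheta} \neq G$; since $W$ is one-dimensional affording $\vartheta$ we have $W_{\vartheta} = W \neq \{0\}$, and (by \cite[Proposition~2.35]{Bernstein1976a}, as in the proof of \refl{linear1}) $W_{\vartheta}$ is an epimorphic image of $V_{\vartheta}$, so $V_{\vartheta} \neq \{0\}$. Applying \refl{orbit2} to the ideal subgroup $P$ and the character $\vartheta$ will then produce an irreducible smooth $G_{\vartheta}$-module structure on $V_{\vartheta}$ together with the isomorphism $V \cong \CInd^{G}_{G_{\vartheta}}(V_{\vartheta})$. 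To see that $V_{\vartheta}$ is one-dimensional I would recycle the final argument in the proof of \refl{linear1}: because $P$ acts on $V_{\vartheta}$ through the scalar character $\vartheta$, every complex subspace of $V_{\vartheta}$ is automatically $P$-stable, so every $T_{\vartheta}$-stable subspace is $T_{\vartheta}P = G_{\vartheta}$-stable; hence $V_{\vartheta}$ is irreducible as a smooth $T_{\vartheta}$-module, and Jacquet's form of Schur's lemma for the abelian second-countable $\ell$-group $T_{\vartheta}$ forces $\dim V_{\vartheta} = 1$.

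It remains to realise $G_{\vartheta}$ as the unit group of a subalgebra of $\CA$, and I would take $\CB = \CD_{\vartheta} \oplus \CJ(\CA)$, where $\CD_{\vartheta} \subseteq \CD$ is the subalgebra supplied by \refp{centraliser1}. Since $\CJ(\CA)$ is a two-sided ideal of $\CA$ and $\CD_{\vartheta}$ is closed under multiplication, $\CB$ is a subalgebra of $\CA$. Using the decomposition $\CA = \CD \oplus \CJ(\CA)$, an element $d + x$ of $\CB$ (with $d \in \CD_{\vartheta}$ and $x \in \CJ(\CA)$) is invertible in $\CA$ if and only if $d \in \CD^{\times}$; in that case the factorisation $d + x = d\bigl(1 + d^{-1}x\bigr)$, together with $(\CD_{\vartheta})^{\times} = T_{\vartheta}$ from \refp{centraliser1}, gives $\CB^{\times} = T_{\vartheta}(1 + \CJ(\CA)) = T_{\vartheta}P = G_{\vartheta}$, completing the proof. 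The only real obstacle is this last routine unit-group computation; every genuine piece of content has been absorbed into \refl{linear1}, \refl{orbit2}, and \refp{centraliser1}.
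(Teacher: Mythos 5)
Your argument is correct and follows essentially the same route as the paper: non-vanishing of $V_{\vartheta}$ via \cite[Proposition~2.35]{Bernstein1976a}, Lemma \ref{linear1} to force $G_{\vartheta} \neq G$ when $\dim V \geq 2$, Lemma \ref{orbit2} for irreducibility of $V_{\vartheta}$ and the isomorphism $V \cong \CInd^{G}_{G_{\vartheta}}(V_{\vartheta})$, the Schur-lemma argument over $T_{\vartheta}$ using $G_{\vartheta} = T_{\vartheta}P$, and finally the subalgebra $\CD_{\vartheta} \oplus \CJ(\CA)$ supplied by Proposition \ref{centraliser1}. The only differences are that you spell out the trivial case $\dim V = 1$ and the unit-group computation $(\CD_{\vartheta}\oplus\CJ(\CA))^{\times} = T_{\vartheta}P$, which the paper asserts without detail.
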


\begin{proof}
We have already proved that the smooth $G_{\vartheta}$-module $V_{\vartheta}$ is one-dimensio\-nal and that $V = \CInd^{G}_{G_{\vartheta}}(V_{\vartheta})$. If $\CD$ is the diagonal subalgebra of $\CA$ and $T = \CD^{\times}$, then the previous proposition assures that $T_{\vartheta}$ is the unit group of some subalgebra $\CD_{\vartheta}$ of $\CD$. Since $G_{\vartheta} = T_{\vartheta}P$ and since $\CJ(\CA)$ is an ideal of $\CA$, it follows that $\CA_{\vartheta} = \CD_{\vartheta} \oplus \CJ(\CA)$ is a subalgebra of $\CA$ and that $G_{\vartheta} = (\CA_{\vartheta})^{\times}$ is the unit group of $\CA_{\vartheta}$.
\end{proof}

For the proof of \reft{main} will be proceed by induction on $\dim \CA$. By the results above, the inductive step depends on the existence of an ideal subgroup $Q$ of $G$ such that $\spec_{Q}(V) \neq \emptyset$ where $V$ is an arbitrary irreducible smooth $G$-module; moreover, we are reduced to the case where $\dim W \geq 2$ for every irreducible quotient $W$ of $\Res^{G}_{P}(V)$ (which obviously implies that $\dim V \geq 2$).\medskip %The existence of the ideal subgroup $Q$ will be established using a slightly modified key construction due to M. Boyarchenko (see \cite[Section~5.2]{Boyarchenko2011a}). The following elementary observation will be crucial for our proof.

%\begin{lemma} \label{ideal}
%Let $n \geq 2$ be an integer such that such that $\CJ^{n-1} \neq \{0\}$. Then, there exists an ideal $\CL$ of $\CA$ such that $\CJ^{n} \subseteq \CL \subseteq \CJ^{n-1}$ and $\dim \CL = \dim \CJ^{n} + 1$.
%\end{lemma}
%
%\begin{proof}
%Since $\CJ^{n} \subseteq \CJ^{n-1}$ are ideals of $\CA$, the result follows immediately from \cite[Lemma~2.1]{Halasi2006a} (the proof of which does not depend on the finiteness of the field $\k$).
%\end{proof}

In what follows, we fix the following notation which we will use repeatedly in the subsequent results (without always recalling their meaning). Let $n \geq 2$ be an integer such that $\CJ^{n} \neq \{0\}$, and write $N = 1+\CJ^{n}$. Since $\CJ^{n} \subseteq \CJ^{n-1}$ are ideals of $\CA$, it follows from \refl{bimodule} that there exists an ideal $\CL$ of $\CA$ such that $\CJ^{n} \subseteq \CL \subseteq \CJ^{n-1}$ and $\dim \CL = \dim \CJ^{n} + 1$. We fix such an ideal, and set $Q = 1+\CL$.  [As usual, if $g$ and $h$ are elements of a group, then we define $g^{h} = h^{-1} gh$ and $[g,h] = g^{-1} h^{-1} gh = g^{-1} g^{h}$.]

\begin{lemma} \label{vphi}
Let $\varsigma \in N^{\circ}$ be $P$-invariant, and define $$\CJ_{\varsigma} = \set{a \in \CJ}{\varsigma([1+a,1+u]) = 1 \all u \in \CL}.$$ Then, $\CJ_{\varsigma}$ is a subalgebra of $\CJ$ satisfying $\CJ^{2} \subseteq \CJ_{\varsigma}$ and $\dim \CJ_{\varsigma} \geq \dim \CJ - 1$. Furthermore, if we define the map $\map{\varphi_{\varsigma}}{P}{Q^{\circ}}$ by the rule $$\varphi_{\varsigma}(g)(h) = \varsigma\big([g,h]\big),\qquad g \in P,\ h \in Q,$$ then $\varphi_{\varsigma}$ is a group homomorphism with $\ker(\varphi_{\varsigma}) = 1+\CJ_{\varsigma}$ and $\varphi_{\varsigma}(P) \subseteq N^{\perp}$ where $N^{\perp} = \set{\tau \in Q^{\circ}}{N \subseteq \ker(\tau)}$ is the orthogonal subgroup of $N$ in $Q^{\circ}$; hence, $\varphi_{\varsigma}$ defines naturally a group homomorphism $\map{\overline{\varphi}_{\varsigma}}{P}{(Q\slash N)^{\circ}}$.
\end{lemma}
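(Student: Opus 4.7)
The plan is to verify the listed properties in order, relying throughout on the commutator identities
$$[g,hh'] = [g,h']\,[g,h]^{h'} \quad\text{and}\quad [gg',h] = [g,h]^{g'}\,[g',h]$$
valid in any group, together with the consequence of $P$-invariance of $\varsigma$ that $\varsigma([p,n]) = 1$ whenever $p \in P$ and $n \in N$.

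First, I verify the basic features of $\varphi_{\varsigma}$. For $g = 1+a \in P$ and $h = 1+u \in Q$ a direct computation gives $[g,h] = 1 + g^{-1}h^{-1}(au - ua)$; since $au, ua \in \CJ \cdot \CJ^{n-1} = \CJ^{n}$ and $P$ preserves $\CJ^{n}$ under left multiplication, $[g,h] \in N$. Hence for $h' \in Q$ the conjugate $[g,h]^{h'}$ also lies in $N$ and $P$-invariance yields $\varsigma([g,h]^{h'}) = \varsigma([g,h])$; applying $\varsigma$ to the first commutator identity then shows that $\varphi_{\varsigma}(g)$ is multiplicative on $Q$, while smoothness follows from continuity of the commutator map and smoothness of $\varsigma$. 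The second identity, treated analogously, shows $\varphi_{\varsigma}$ is a group homomorphism. Finally, for $h \in N$ the identity $\varsigma(g^{-1}hg) = \varsigma(h)$ directly gives $\varsigma([g,h]) = 1$, so $\varphi_{\varsigma}(P) \subseteq N^{\perp}$ and $\varphi_{\varsigma}$ descends to $\overline{\varphi}_{\varsigma} \colon P \to (Q/N)^{\circ}$; the equality $\ker(\varphi_{\varsigma}) = 1 + \CJ_{\varsigma}$ is immediate from the definition of $\CJ_{\varsigma}$.

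The crucial algebraic step is $\CJ^{2} \subseteq \CJ_{\varsigma}$: given $a,b \in \CJ$ and $u \in \CL$, I need $\varsigma([1+ab, 1+u]) = 1$. Here $bu, ua \in \CJ^{n}$, so $1+bu, 1+ua \in N$ and $P$-invariance provides $\varsigma([1+a, 1+bu]) = \varsigma([1+b, 1+ua]) = 1$. The identity $abu - uab = (abu - bua) + (bua - uab)$ at the Lie-algebra level suggests the approach: combine the commutator identities above (in Hall--Witt fashion) to express $[1+ab, 1+u]$ as the product of $[1+a, 1+bu]$ and $[1+b, 1+ua]$ modulo residual commutators of the form $[p,n]$ with $p \in P$, $n \in N$, all killed by $\varsigma$. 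Granted this, closure of $\CJ_{\varsigma}$ under multiplication is immediate ($\CJ_{\varsigma} \cdot \CJ_{\varsigma} \subseteq \CJ^{2} \subseteq \CJ_{\varsigma}$), and closure under addition follows because $1+a+b = (1+a)(1+b)(1+c)$ with $c \in \CJ^{2}$, so all three factors lie in $\ker(\varphi_{\varsigma})$.

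For closure under $\k$-scalar multiplication and the dimension bound, the key point is that the ideal $\CL$ was chosen so that $\CL/\CJ^{n}$ is one-dimensional over $\k$; therefore $Q/N$ is topologically isomorphic to $\k^{+}$, and Pontryagin self-duality identifies $(Q/N)^{\circ}$ with a one-dimensional $\k$-vector space. The induced map $\CJ/\CJ^{2} \to (Q/N)^{\circ}$ coming from $\overline{\varphi}_{\varsigma}$ should be $\k$-linear: the approximation $[1+\lambda a, 1+u] \equiv [1+a, 1+\lambda u] \pmod{1+\CJ^{n+1}}$ matches the dual scalar action on the target. This realises $\CJ_{\varsigma}/\CJ^{2}$ as a $\k$-subspace of $\CJ/\CJ^{2}$ of codimension at most $1$, giving $\dim \CJ_{\varsigma} \geq \dim \CJ - 1$. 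I expect the Hall--Witt-style manipulation establishing $\CJ^{2} \subseteq \CJ_{\varsigma}$, together with the verification of $\k$-linearity of the induced map, to be the most delicate parts of the argument.
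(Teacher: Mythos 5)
Your outline reproduces the paper's architecture (commutator identities plus $P$-invariance for the homomorphism properties, the kernel and $N^{\perp}$ statements, then $\CJ^{2}\subseteq\CJ_{\varsigma}$, additive closure via $(1+u)(1+v)\in(1+u+v)(1+\CJ^{2})$, and the dimension bound via $Q/N\cong\k^{+}$), but the two steps you yourself flag as delicate are exactly the ones left unproven, and they are the real content of the lemma. For $\CJ^{2}\subseteq\CJ_{\varsigma}$ you write ``Granted this'' after asserting that $[1+ab,1+u]$ can be rewritten as $[1+a,1+bu]\,[1+b,1+ua]$ up to residual commutators of the form $[p,n]$ with $p\in P$, $n\in N$. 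That containment in the subgroup $[P,N]=[1+\CJ,1+\CJ^{n}]$ (as opposed to mere congruence modulo $1+\CJ^{n+1}$) is precisely what must be proved; the paper does not prove it either but invokes Halasi's Theorem~1.4 of \cite{Halasi2004a}, which gives $[1+\CJ^{2},1+\CL]\subseteq[1+\CJ^{2},1+\CJ^{n-1}]\subseteq[1+\CJ,1+\CJ^{n}]$, and this is a nontrivial theorem, not a routine Hall--Witt manipulation. Your leading-order computation only shows the identity modulo $1+\CJ^{n+1}$, which does not suffice, because $\varsigma$ is an arbitrary $P$-invariant smooth character of $N=1+\CJ^{n}$ and there is no reason for it to be trivial on $1+\CJ^{n+1}$ (in the application, $n$ is chosen minimal, so triviality there genuinely fails in general); what $P$-invariance kills is $[P,N]$, not $1+\CJ^{n+1}$.

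The same objection applies, more sharply, to your scalar-multiplication/linearity step: you justify it by the approximation $[1+\lambda a,1+u]\equiv[1+a,1+\lambda u]\pmod{1+\CJ^{n+1}}$, which is the wrong modulus for the reason just given. The paper instead records (its equation $(\dagger)$) that $[1+\alpha a,1+u]\,[1+a,1+\alpha u]^{-1}$ lies in $[1+\CJ,1+\CJ^{n}]=[P,N]$, quoting the proof of Proposition~3.1(b) of \cite{Boyarchenko2011a}, and only then applies $\varsigma$. So the gap is concrete: you need, and do not supply, the two containments in $[P,N]$ (for the quadratic terms and for the scalar relation); without them both $\CJ^{2}\subseteq\CJ_{\varsigma}$ and the $\k$-linearity underlying $\dim\CJ_{\varsigma}\geq\dim\CJ-1$ are unsupported. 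The remaining parts of your argument (multiplicativity of $\varphi_{\varsigma}(g)$ and of $\varphi_{\varsigma}$, $\ker(\varphi_{\varsigma})=1+\CJ_{\varsigma}$, $\varphi_{\varsigma}(P)\subseteq N^{\perp}$, closure under addition and multiplication once $\CJ^{2}\subseteq\CJ_{\varsigma}$ is known, and the codimension-one bound from $(Q/N)^{\circ}\cong\k$) are correct and essentially identical to the paper's.
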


\begin{proof}
We first observe that the map $\varphi_{\varsigma}$ is a well-defined group homomorphism. On the one hand, we have $[P,Q] \subseteq [1+\CJ,1+\CJ^{n-1}] \subseteq 1+\CJ^{n} = N$. On the other hand, since $[g,hk] = [g,k] [g,h]^{k}$, we deduce that $$\varphi_{\varsigma}(g)(hk) = \varsigma([g,k]) \varsigma([g,h]) = \varphi_{\varsigma}(g)(h) \varphi_{\varsigma}(g)(k)$$ for all $g \in P$ and all $h,k \in Q$; we recall that $\varsigma$ is $P$-invariant. It follows that, for every $g \in P$, the map $\map{\varphi_{\varsigma}(g)}{Q}{\C^{\times}}$ is indeed a (smooth) character of $Q$. Similarly, since $[gh,k] = [g,k]^{h} [h,k]$, we have $$\varphi_{\varsigma}(gh)(k) = \varsigma([g,k]) \varsigma([h,k]) = \varphi_{\varsigma}(g)(k) \varphi_{\varsigma}(h)(k)$$ for all $g,h \in P$ and all $k \in Q$, and so $\varphi_{\varsigma}$ is a group homomorphism.

Now, since $[P,N] \subseteq \ker(\varsigma)$ (because $\varsigma$ is $P$-invariant), the image $\varphi_{\varsigma}(P)$ clearly lies in $N^{\perp}$; moreover, it is obvious (by the definition) that $\ker(\varphi_{\varsigma}) = 1+\CJ_{\varsigma}$. Let $a \in \CJ$ and $\alpha \in \k$ be arbitrary. It is straightforward to check that $$[1+\alpha a, 1+u][1+a,1+\alpha u]^{-1} \in 1+\CJ^{n+1}$$ for all $u \in \CJ^{n-1}$; indeed, as in the proof of \cite[Proposition~3.1(b) (pg. 547)]{Boyarchenko2011a}, we deduce that $$[1+\alpha a, 1+u]\,[1+a,1+\alpha u]^{-1} \in [1+\CJ, 1+\CJ^{n}]$$ for all $u \in \CJ^{n-1}$. Since $\varsigma$ is $P$-invariant (and $[1+\CJ, 1+\CJ^{n}] = [P,N]$), we conclude that
\begin{equation} \label{scalar}
\varsigma([1+\alpha a, 1+u]) = \varsigma([1+a,1+\alpha u]) \tag{$\dagger$}
\end{equation}
for all $u \in \CJ^{n-1}$, and this clearly implies that $\alpha u \in \CJ_{\varsigma}$ for all $\alpha \in \k$ and all $u \in \CJ_{\varsigma}$. On the other hand,\cite[Theorem~1.4]{Halasi2004a} implies that $$[1+\CJ^{2},1+\CL] \subseteq [1+\CJ^{2},1+\CJ^{n-1}] \subseteq [1+\CJ,1+\CJ^{n}] = [P,N],$$ and thus $\CJ^{2} \in \CJ_{\varsigma}$. Since $\ker(\varphi_{\varsigma}) = 1+\CJ_{\varsigma}$ and since $$(1+u+v)^{-1} (1+u)(1+v) = 1+(1+u+v)^{-1} uv \in 1+\CJ^{2},$$ we see that $u+v \in \ker(\varphi_{\varsigma})$ for all $u,v \in \CJ_{\varsigma}$. It follows that $\CJ_{\varsigma}$ is an ideal of $\CJ$ with $\CJ^{2} \subseteq \CJ_{\varsigma}$.

Finally, notice that $N^{\perp} \cong (Q\slash N)^{\circ}$ and that $$Q\slash N = (1+\CL)\slash(1+\CJ^{n}) \cong 1+(\CL\slash\CJ^{n}) \cong \k^{+}.$$ Therefore, since $P\slash \ker(\varphi_{\varsigma}) \cong \varphi_{\varsigma}(P) \subseteq N^{\perp}$, we conclude that $\dim \CJ - \dim \CJ_{\varsigma} \leq 1$, and this completes the proof.
\end{proof}

The following result is essentially a particular case of \cite[Lemma~3.4]{Halasi2006a}; for convenience of the reader, we include a proof which avoids the finiteness of the base field $\k$.

\begin{lemma} \label{ideal}
Let $\varsigma \in N^{\circ}$ be $G$-invariant, let $\CI$ be an ideal of $\CA$ with $\CJ^{2} \subseteq \CI \subseteq \CJ$, and let $\CJ_{\varsigma} \subseteq \CJ$ be defined as in \refl{vphi}. Then, $\CI_{\varsigma} = \CI \cap \CJ_{\varsigma}$ is an ideal of $\CA$.%; in particular, $\CD\oplus\CI_{\varsigma}$ is a subalgebra of $\CA$.
\end{lemma}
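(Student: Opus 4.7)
The strategy is to show that $\CI_\varsigma$ is stable under left and right multiplication by $\CA = \CD \oplus \CJ$, so that it becomes an $\CA$-ideal. The multiplication by $\CJ$ is easy: for $x \in \CI_\varsigma$ and $b \in \CJ$, we have $bx \in \CJ \cdot \CI \subseteq \CI$ (since $\CI$ is an ideal of $\CA$) and $bx \in \CJ \cdot \CJ_\varsigma \subseteq \CJ^{2} \subseteq \CJ_\varsigma$ by \refl{vphi}, whence $bx \in \CI_\varsigma$; the right $\CJ$-action is symmetric. Hence the real content is stability under $\CD$-multiplication, which I will deduce by showing that $\CJ_\varsigma$ itself is a $\CD$-sub-bimodule of $\CJ$.

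The first step exploits the full $G$-invariance of $\varsigma$ (a strictly stronger hypothesis than the $P$-invariance used in \refl{vphi}) to prove that $\CJ_\varsigma$ is $T$-conjugation stable. For $t \in T$, $a \in \CJ_\varsigma$, and $u \in \CL$, the standard commutator identity $[t(1+a)t^{-1}, 1+u] = t \cdot [1+a,\, 1+t^{-1}ut] \cdot t^{-1}$, combined with the $G$-invariance of $\varsigma$ and the fact that $\CL$ is a two-sided ideal of $\CA$ (so $t^{-1}ut \in \CL$), yields
$$\varsigma\bigl([1+tat^{-1},\, 1+u]\bigr) \,=\, \varsigma\bigl([1+a,\, 1+t^{-1}ut]\bigr) \,=\, 1,$$
and hence $tat^{-1} \in \CJ_\varsigma$.

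The second step combines this $T$-invariance with the weight decomposition $\CJ = \bigoplus_{i,j} e_i \CJ e_j$ of \refl{bimodule}. Conjugation by $t = \sum_k \alpha_k e_k \in T$ acts on $e_i \CJ e_j$ by the scalar $\alpha_i \alpha_j^{-1}$, and since $\k$ is infinite, a Vandermonde / linear-independence-of-characters argument separates the off-diagonal weights, giving $e_i x e_j \in \CJ_\varsigma$ for every $x \in \CJ_\varsigma$ whenever $i \neq j$. The diagonal components $e_i \CJ e_i$ all carry the trivial $T$-weight, so this is the delicate case; here I would use that $\CL/\CJ^{n}$ is a one-dimensional $\CD$-sub-bimodule of $\CJ^{n-1}/\CJ^{n}$, hence concentrated in a single homogeneous block $e_k \CJ^{n-1} e_l / e_k \CJ^{n} e_l$. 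A direct commutator computation, parallel to the one justifying $(\dagger)$ in the proof of \refl{vphi}, together with the fact that $G$-invariance forces $\varsigma$ to be supported on the diagonal blocks of $\CJ^{n}$, shows that the single linear condition cutting out $\CJ_\varsigma/\CJ^{2}$ inside $\CJ/\CJ^{2}$ involves only one diagonal block. Consequently $\CJ_\varsigma$ is a $\CD$-sub-bimodule of $\CJ$.

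Assembling the pieces: since both $\CI$ (as an $\CA$-ideal) and $\CJ_\varsigma$ are $\CD$-sub-bimodules, so is their intersection $\CI_\varsigma$, which yields $\CD \cdot \CI_\varsigma \cup \CI_\varsigma \cdot \CD \subseteq \CI_\varsigma$; combined with the $\CJ$-multiplication step this gives $\CA \cdot \CI_\varsigma \cup \CI_\varsigma \cdot \CA \subseteq \CI_\varsigma$, so $\CI_\varsigma$ is an ideal of $\CA$. The main technical hurdle is the diagonal (trivial-weight) case of the second step: $T$ acts trivially on $\bigoplus_i e_i \CJ e_i$, so mere $T$-invariance is insufficient to recover a $\CD$-bimodule structure there, and one must extract extra rigidity from the $1$-dimensionality of $\CL/\CJ^{n}$ together with the $G$-invariance of $\varsigma$.
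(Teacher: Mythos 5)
Your reduction is fine as far as it goes: stability of $\CI_{\varsigma}$ under multiplication by $\CJ$ is immediate, and if $\CJ_{\varsigma}$ were known to be a $\CD$-sub-bimodule then $\CJ_{\varsigma}$, hence $\CI\cap\CJ_{\varsigma}$, would be an ideal of $\CA$. But observe that ``$\CJ_{\varsigma}$ is a $\CD$-sub-bimodule'' is exactly the case $\CI=\CJ$ of the lemma, i.e.\ the hardest case, so the entire difficulty is concentrated in your Step 3 --- and Step 3 is asserted, not proved. Two claims carry all the weight and neither is established. First, ``$G$-invariance forces $\varsigma$ to be supported on the diagonal blocks of $\CJ^{n}$'' is a statement about the character $\varsigma$ of the (generally non-abelian) group $N=1+\CJ^{n}$, not about a linear functional on $\CJ^{n}$; you neither prove it nor say in which sense $\varsigma(1+x)$ ``depends only on'' the diagonal components of $x$. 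Second, even granting it, the deduction that the condition cutting out $\CJ_{\varsigma}/\CJ^{2}$ involves a single block ignores the correction terms: $[1+a,1+u]$ equals $1+(au-ua)$ only modulo $\CJ^{n+1}$, and $\varsigma$ need not kill $1+\CJ^{n+1}$. Controlling exactly this is where the paper invests its effort (exact identities such as $[1+v,1+u]=1-u'v$ when $vu=0$, Halasi's theorem giving $[1+\CJ^{2},1+\CJ^{n-1}]\subseteq[P,N]$, the induction on $\dim\CI$, and Halasi's Lemma~2.2 forcing $\dim e_{r}\CV\leq 1$ and $\dim\CV e_{r}\leq 1$). Finally, ``involves only one diagonal block'' is not the right statement: writing $\CL=\CJ^{n}\oplus\k u$ with $u=e_{i}ue_{j}$, the block that can survive in the defining condition is $e_{j}\CJ e_{i}$, which is off-diagonal whenever $i\neq j$; what you actually need is that every homogeneous block other than $e_{j}\CJ e_{i}$ is contained in $\CJ_{\varsigma}$.

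That said, your strategy can be completed, and it would then be genuinely different from (and more direct than) the paper's induction. For a homogeneous element $a=e_{k}ae_{l}$ with $(k,l)\neq(j,i)$, either $au=ua=0$, so that $1+a$ already centralises $1+\k u$ modulo terms killed by $\varsigma$, or one can choose $t\in T$ whose conjugation fixes one of $a,u$ and scales the other by an arbitrary $\gamma\in\k^{\times}$; since $\alpha\mapsto\varsigma([1+a,1+\alpha u])$ is a character of $\k^{+}$ (it is $\varphi_{\varsigma}(1+a)$ read through $Q\slash N\cong\k^{+}$, using \refl{vphi} and \refeq{scalar}), the $T$-invariance of $\varsigma$ makes it constant on $\k^{\times}$, hence trivial, whence $e_{k}\CJ e_{l}\subseteq\CJ_{\varsigma}$; this yields the single-block statement, so $\CJ_{\varsigma}$ is an ideal of $\CA$ and the lemma follows by intersecting with $\CI$. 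This is the same mechanism as the paper's argument with the subgroup $S=1+\k(u'v)$, but applied block by block it avoids the induction on $\dim\CI$ altogether. As written, however, your proposal replaces precisely this step by an unproved (and partly misstated) claim, so it does not yet constitute a proof.
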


\begin{proof}
The result is obvious in the case where $\CJ_{\varsigma} = \CJ$; hence, we assume that $\CJ_{\varsigma} \neq \CJ$, so that $\dim\CJ_{\varsigma} = \dim\CJ -1$ (by the previous lemma). The result is also clearly true in the case where $\CI \subseteq \CJ_{\varsigma}$; thus, we may assume that $\CJ_{\varsigma}+\CI = \CJ$, which implies that $\dim\CI_{\varsigma} = \dim\CI -1$. We now proceed by induction on $\dim \CI$, the result being obvious if $\dim \CI = \dim \CJ^{2} + 1$; indeed, since $\CJ^{2} \subseteq \CI \cap \CJ_{\varsigma} \subseteq \CI$, either $\CI \cap\CJ_{\varsigma} = \CJ^{2}$, or $\CI \cap\CJ_{\varsigma} = \CI$. Therefore, we may assume that $\dim \CI \geq \dim \CJ^{2} + 2$, and that the result is true whenever $\CI'$ is a ideal of $\CA$ with $\CJ^{2} \subseteq \CI' \subseteq \CJ$ and $\dim \CI' < \dim \CI$.%; on the other hand, we may also assume (by induction) that we have already proved that $\CJ' \cap \CJ_{\varsigma}$ is an ideal of $\CA$ whenever $\CJ'$ is a two-sided ideal of $\CA$ satisfying $\CJ^{2} \subseteq \CJ' \subsetneq \CJ$; we observe that $1+(\CJ'\cap\CJ_{\varsigma}) \ker(\varphi')$ where $\map{\varphi'}{1+\CJ'}{Q^{\circ}}$ is the restriction of $\varphi$ to $1+\CJ'$. Then, we claim that $\CJ'_{\varsigma} = \CJ' \cap \ker(\varphi)$ is a two-sided ideal of the subalgebra $\CA' = \CD\oplus\CJ'$; notice that $1+\CJ'_{\varsigma} = \ker(\varphi) \cap (1+\CJ') = \ker(\varphi')$.

%Without loss of generality, we may assume that $\CA = \CA(\varsigma)$; hence, $\CD(\varsigma) = \CD$, and thus we assume that the character $\varsigma \in N^{\circ}$ is $T$-invariant. Firstly, we observe that the claim is obvious in the case where $\dim \CJ = \dim\CJ^{2}+1$ (because $\dim\CJ_{\varsigma} = \dim\CJ-1$ and $\CJ^{2} \subseteq \CJ_{\varsigma}$). Therefore, we may assume that $\dim\CJ \geq \dim\CJ^{2}+2$; on the other hand, we may also assume (by induction) that we have already proved that $\CJ' \cap \ker(\varphi)$ is a two-sided ideal of $\CA$.
%
%\begin{claim}
%Let $\CJ'$ be an arbitrary two-sided ideal of $\CA$ with $\CJ^{2} \subseteq \CJ' \subseteq \CJ$, and let $\map{\varphi'}{1+\CJ'}{Q^{\circ}}$ be the restriction of $\varphi$ to the ideal subgroup $1+\CJ'$ of $G$. Then, we claim that $\CJ'_{\varsigma} = \CJ' \cap \ker(\varphi)$ is a two-sided ideal of the subalgebra $\CA' = \CD\oplus\CJ'$; notice that $1+\CJ'_{\varsigma} = \ker(\varphi) \cap (1+\CJ') = \ker(\varphi')$.
%\end{claim}

Let $\CI'_{\varsigma}$ be the unique ideal of $\CA$ which is maximal with respect to the condition $\CI'_{\varsigma} \subseteq \CI_{\varsigma}$; hence, we must prove that $\CI'_{\varsigma} = \CI_{\varsigma}$. %Let $\CD$ be the diago$\seq{e}{n} \in \CD$ be non-zero orthogonal idempotents such that $\CD = \k e_{1} \oplus \cdots \oplus \k e_{n}$. Then, 
Since $\CI'_{\varsigma}$ is clearly a $\CD$-bimodule, \refl{bimodule} assures that $\CI = \CI'_{\varsigma} \oplus \CV$ for some sub-bimodule $\CV$ of $\CI$. % satisfying $e_{i}\CV e_{j} \subseteq \CV$ for all $1 \leq i,j \leq n$.%; in particular, $\CV$ has a $\k$-basis $\{\seq{v}{t}\}$ such that $\CD v_{r} = v_{r}\CD = v_{r}$ for all $1 \leq r \leq m$.
Let $\CV_{\varsigma} = \CV \cap \CJ_{\varsigma}$, and note that $\CI_{\varsigma} = \CI'_{\varsigma} \oplus \CV_{\varsigma}$; hence, $\CI'_{\varsigma} = \CI_{\varsigma}$ if and only if $\CV_{\varsigma} = \{0\}$. By the way of contradiction, we assume that $\CV_{\varsigma} \neq \{0\}$; notice that $\CV_{\varsigma} \neq \CV$ (otherwise, $\CI_{\varsigma} = \CI$). Since $Q$ a $T$-invariant subgroup of $G$ (because it is an ideal subgroup of $G$), we deduce that $$\varsigma([1+t^{-1} at,h]) = \varsigma([1+a,tht^{-1}]^{t}) = \varsigma([1+a,tht^{-1}]) = 1$$ for all $a \in \CV_{\varsigma}$, all $t \in T$, and all $h \in Q$ Since $\CV_{\varsigma} \subseteq \CJ_{\varsigma}$ (and since $\CV$ is clearly $T$-invariant), it follows that $\CV_{\varsigma}$ is a $T$-invariant vector subspace of $\CV$.

On the other hand, let $\CV' \neq \{0\}$ be a proper sub-bimodule of $\CV$, and let $\CI' = \CI'_{\varsigma}+\CV'$. Then, $\CI'$ is an ideal of $\CA$ with $\CI' \subsetneq \CI$, and thus $\CI' \cap \CJ_{\varsigma}$ is an ideal of $\CA$ (by the inductive hypothesis). Since $\CI'_{\varsigma} \subseteq \CI' \cap \CJ_{\varsigma} \subseteq \CI \cap \CJ_{\varsigma} = \CI_{\varsigma}$, we conclude that $\CI' \cap \CJ_{\varsigma} = \CI'_{\varsigma}$ (by the maximality of $\CI'_{\varsigma}$), and thus $$\CV_{\varsigma} \cap \CV' = (\CJ_{\varsigma} \cap \CI') \cap \CV = \CI'_{\varsigma} \cap \CV = \{0\}.$$ Therefore, the vector subspaces $\CV$ and $\CV_{\varsigma}$ of $\CI$ satisfy the assumptions of \cite[Lemma~2.2]{Halasi2006a} (we note that the proof of this result holds for an arbitrary field). In particular, if $\seq{e}{n} \in \CD$ are non-zero orthogonal idempotents such that $\CD = \k e_{1} \oplus \cdots \oplus \k e_{n}$, then $$\dim e_{r}\CV \leq 1\quad \text{and}\quad \dim \CV e_{r} \leq 1$$ for every $1 \leq r \leq n$.%$$ % \kand thus $\CV$ has a $\k$-basis $the following property holds: if $\{\seq{v}{t}\}$ is an (arbitrary) $\k$-basis such that $\CD v_{r} = v_{r}\CD = v_{r}$ for all $1 \leq r \leq t$, and if $1 \leq i(1), \ldots, i(t) \leq n$ and $1 \leq j(1), \ldots, j(t) \leq n$ are such that $v_{r} = e_{i(r)}v_{r}e_{j(r)}$ for all $1 \leq r \leq t$, then:
%\begin{itemize}
%\item $\CV_{\varsigma} \not\subseteq \sum_{1 \leq r \neq s \leq m} \k v_{r}$ for all $1 \leq s \leq t$;
%\item $e_{i(r)}v_{s} = v_{s}e_{j(r)} = 0$ for all $1 \leq r \neq s \leq t$.
%\end{itemize}

Next, we consider the ideal subgroup $Q = 1+\CL$ of $G$, and the group homomorphism $\map{\varphi_{\varsigma}}{P}{Q^{\circ}}$ (as defined in the previous lemma); we recall that $\ker(\varphi_{\varsigma}) = 1+\CJ_{\varsigma}$. Since $\CL$ is an ideal of $\CA$ with $\dim \CL = \dim \CJ^{n}+1$, we have $\CL = \CJ^{n} \oplus \k u$ where $u = e_{i}ue_{j}$ for some $1 \leq i,j \leq n$; hence $Q = (1+\k u)N$.

Firstly, suppose that $e_{j}\CV = \CV e_{i} = \{0\}$, and let $v \in \CV$ be arbitrary. Then, $uv = ue_{j}v = 0$ and $vu = ve_{i}u = 0$, and so $[1+v,1+\alpha u] = 1$ for all $\alpha \in \k$. It follows that $1+v \in \ker(\varphi_{\varsigma}) = 1+\CJ_{\varsigma}$, and thus $\CV \subseteq \CJ_{\varsigma}$. Therefore, in this case, we have $\CV \subseteq \CI \cap \CJ_{\varsigma} = \CI_{\varsigma}$, and hence $\CI = \CI'_{\varsigma} + \CV \subseteq \CI_{\varsigma}$ which implies that $\CI_{\varsigma} = \CI$ is an ideal of $\CA$.

Now, suppose that $e_{j}\CV \neq \{0\}$, and let $v \in \CV$ be such that $e_{j}\CV = \k v$; since $\CV$ has a $\k$-basis consisting of vectors $w \in \CV$ satisfying $\CD w = w\CD = \k w$, it is clear that $v = ve_{k}$ for some $1 \leq k \leq n$ (see \refl{bimodule}). Then, $\CV = \CV' \oplus \k v$ for some sub-bimodule $\CV'$ of $\CV$; in particular, we have $e_{j}\CV' = \CV'e_{k} = \{0\}$. On the one hand, suppose that $\CV'e_{i} = \{0\}$. Then, the argument above shows that $\CV' \subseteq \CI_{\varsigma}$, and so $\CV' \subseteq \CV \cap \CI_{\varsigma} = \CV_{\varsigma}$. It follows that $\CV' = \{0\}$ (because $\CV' \subsetneq \CV$), and thus $\CV = \k v$. By the definition of $\CV$, we conclude that $\CI = \CI'_{\varsigma}\oplus\k v$, and hence $\dim\CI = \dim\CI'_{\varsigma}+1$. Since $\CI'_{\varsigma} \subseteq \CI_{\varsigma}$ and $\dim \CI_{\varsigma} = \dim \CI -1$, we must have $\CI'_{\varsigma} = \CI_{\varsigma}$, and hence $\CI_{\varsigma}$ is an ideal of $\CA$. On the other hand, let us assume that $\CV'e_{i} \neq \{0\}$, and let $w \in \CV'$ be such that $\CV'e_{i} = \k w$; as above, we must also have $\CD w = \k w$. In this situation, we have $\CV = \CV'' \oplus (\k v \oplus \k w)$ for some sub-bimodule $\CV''$ of $\CV$. Since $e_{j}\CV'' = \CV''e_{i} = \{0\}$, we may repeat the argument above to conclude that $\CV'' \subseteq \CV \cap \CJ_{\varsigma} = \CV_{\varsigma}$. Therefore, $\CV'' = \{0\}$, and so $\CV = \k v \oplus \k w$.

Since $k\neq i$ (otherwise, $v = ve_{i} \in \CV e_{i} = \k w$), we have $vu = 0$, and thus $[1+v,1+u] = 1-u'v$ where $u' \in \CJ$ is such that $(1+u)^{-1} = 1+u'$. Since $u'v \in u\CA v$, we see that $(u'v)^{2} \in (u\CA v)^{2} = \{0\}$, and thus $S = 1+\k(u'v)$ is a $T$-invariant algebra subgroup of $N$; indeed, we have $\CD(u'v)\CD = \k u'v$ (because $u' = e_{i}u'$ and $v = ve_{k}$). Let $\alpha \in \k^{\times}$ be arbitrary, and choose $t \in T$ such that $t^{-1} u' = \alpha u'$ and $vt = v$; notice that, since $i \neq k$, it is enough to choose $t \in T$ satisfying $te_{i} = \alpha^{-1} e_{i}$ and $te_{k} = e_{k}$. It follows that $$[1+v,1+u]^{t} = (1-u'v)^{t} = 1-t^{-1} u'vt = 1-\alpha u'v,$$ and thus the restriction $\varsigma_{S}$ of $\varsigma$ to $S$ is a (smooth) character of $S$ which is constant on $S\setminus\{1\}$ (because $\varsigma$ is $T$-invariant). Therefore, $\varsigma_{S}$ must be the trivial character, and thus $\varsigma([1+v,1+u]) = 1$. It follows that $1+v \in 1+\CJ_{\varsigma}$, and so $v \in \CV \cap \CJ_{\varsigma} = \CV_{\varsigma}$. Since $\k v \neq \CV$ and $\CD v = v \CD = \k v$, we must have $\k v = \{0\}$, a contradiction.

The proof is complete.
\end{proof}

We are now able to prove the following crucial result.

\begin{proposition} \label{extension}
Let $\varsigma \in N^{\circ}$ be $P$-invariant, and let $\CJ_{\varsigma} \subseteq \CJ$ be defined as in \refl{vphi}. %, and assume that $[Q,Q] \subseteq \ker(\varsigma)$. 
Then, $[Q,Q] \subseteq \ker(\varsigma)$, and there exists $\vartheta \in Q^{\circ}$ such that $\vartheta_{N} = \varsigma$; moreover, the following properties hold.% (recall that $P_{\vartheta'}$ denotes the centraliser of $\vartheta' \in Q^{\circ}$ in $P$).
\begin{enumerate}
\item $P_{\vartheta'} = 1+\CJ_{\varsigma}$ for all $\vartheta' \in Q^{\circ}$ such that $\vartheta'_{N} = \varsigma$.
\item If $P_{\vartheta} \neq P$ and if $\vartheta' \in Q^{\circ}$ is such that $\vartheta'_{N} = \varsigma$, then there exists $g \in P$ such that $\vartheta' = \vartheta^{g}$.
%\item $G_{\vartheta}$ is the unit group of some subalgebra of $\CA$.
\end{enumerate}
\end{proposition}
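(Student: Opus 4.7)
My plan is to establish the four claims in sequence, building on the structure already developed in \refl{vphi}.

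First, for $[Q,Q] \subseteq \ker(\varsigma)$: since $\CL = \CJ^{n} \oplus \k u$, one has $Q = (1+\k u) N$, and the commutator identities $[gh,k] = [g,k]^{h}[h,k]$ and $[g,hk] = [g,k][g,h]^{k}$ reduce $[Q,Q]$ to the subgroup generated by commutators of the forms $[1+\alpha u,1+\beta u]$ (trivial, since $\k u$ commutes with itself), $[1+\alpha u,n]$, and $[n,m]$ for $\alpha,\beta \in \k$ and $n,m \in N$. The last two types lie in $[P,N]$, and since $\varsigma$ is $P$-invariant, it is trivial on $[P,N]$. For the existence of $\vartheta$, I would then observe that $\varsigma$ descends to a character of the closed subgroup $N/\overline{[Q,Q]}$ of the locally compact abelian group $Q/\overline{[Q,Q]}$; standard Pontryagin duality (extension of continuous characters from closed subgroups of LCA groups) yields an extension to $Q/\overline{[Q,Q]}$, and since $Q$ is an $\ell_{c}$-group the resulting continuous character is automatically smooth, giving $\vartheta \in Q^{\circ}$ with $\vartheta_{N} = \varsigma$.

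For part (i), I would observe that $g \in P_{\vartheta'}$ is equivalent to $\vartheta'(gxg^{-1}x^{-1}) = 1$ for every $x \in Q$. Since $gxg^{-1}x^{-1} = [g^{-1},x^{-1}] \in [P,Q] \subseteq N$ and $\vartheta'|_{N} = \varsigma$, this becomes $\varsigma([g^{-1},x^{-1}]) = 1$ for every $x \in Q$; after the substitution $y = x^{-1}$ (a bijection of $Q$) and using that $1+\CJ_{\varsigma}$ is closed under inverses, this is precisely $g \in 1+\CJ_{\varsigma}$. The crucial feature is that this criterion never involves $\vartheta'$ itself, only its restriction $\varsigma$ to $N$, so $P_{\vartheta'} = 1+\CJ_{\varsigma}$ for every extension $\vartheta'$.

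For part (ii), my first step is to identify $\vartheta^{g}/\vartheta$ with $\varphi_{\varsigma}(g)$ as characters of $Q$. The direct computation gives $(\vartheta^{g}/\vartheta)(x) = \varsigma(gxg^{-1}x^{-1}) = \varsigma([g^{-1},x^{-1}]) = \varphi_{\varsigma}(g^{-1})(x^{-1})$, and then the relations $\chi(x^{-1}) = \chi(x)^{-1}$ for any character $\chi$ and $\varphi_{\varsigma}(g^{-1}) = \varphi_{\varsigma}(g)^{-1}$ reduce this to $\varphi_{\varsigma}(g)(x)$. Hence the set of all extensions of $\varsigma$ is the torsor $\vartheta \cdot N^{\perp}$, while the $P$-orbit of $\vartheta$ is exactly $\vartheta \cdot \varphi_{\varsigma}(P)$, so it remains to show $\varphi_{\varsigma}(P) = N^{\perp}$ whenever $P_{\vartheta} \neq P$. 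Since $\CJ^{2} \subseteq \CJ_{\varsigma}$, the homomorphism $\varphi_{\varsigma}$ factors through $P/(1+\CJ^{2}) \cong \CJ/\CJ^{2}$ (additively); identifying $N^{\perp} \cong (Q/N)^{\circ}$ with $\k^{+}$ via a fixed non-trivial character of $\k^{+}$ then produces an additive map $c\colon \CJ \to \k$ with $\ker c = \CJ_{\varsigma}$. Equation $(\dagger)$ from the proof of \refl{vphi}, applied with $\beta u$ in place of $u$, gives $c(\alpha a)\beta = c(a)\alpha\beta$ for all $\alpha,\beta \in \k$ and $a \in \CJ$, so $c$ is $\k$-linear. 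Consequently the induced injection $\CJ/\CJ_{\varsigma} \hookrightarrow \k$ is either zero (corresponding to $\varphi_{\varsigma}(P) = \{1\}$ and $P_{\vartheta} = P$, by part (i)) or a $\k$-linear isomorphism, in which case $\varphi_{\varsigma}(P) = N^{\perp}$ and every extension $\vartheta'$ is of the form $\vartheta^{g}$ for some $g \in P$.

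The chief technical hurdle I anticipate is the matching $\vartheta^{g}/\vartheta = \varphi_{\varsigma}(g)$, since the direct computation produces the commutator $[g^{-1},x^{-1}]$ while $\varphi_{\varsigma}$ is defined via $[g,x]$; the conversion requires careful bookkeeping with the convention $[g,h] = g^{-1}h^{-1}gh$ and the inversion symmetry of characters, together with the observation that $gxg^{-1}x^{-1}$ always lands in $N$ (where $\vartheta$ agrees with $\varsigma$).
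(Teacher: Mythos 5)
Your proof is correct, and most of it runs along the same lines as the paper's: the paper also reduces $[Q,Q]$ to $[1+\k a,1+\k a]$ and $[P,N]$ via $P$-invariance, also identifies $P_{\vartheta'}$ with $\ker(\varphi_{\varsigma}) = 1+\CJ_{\varsigma}$ for \emph{every} extension $\vartheta'$ of $\varsigma$, and also uses \refeq{scalar} together with the self-duality of $\k$ to show that $\map{\varphi_{\varsigma}}{P}{N^{\perp}}$ is surjective when $P_{\vartheta}\neq P$ (your $\k$-linear functional $\map{c}{\CJ}{\k}$ with kernel $\CJ_{\varsigma}$ is a repackaging of the paper's statement that $\alpha\mapsto\varphi_{\varsigma}(1+\alpha a)$ gives an isomorphism $\k^{+}\cong N^{\perp}$), before concluding from $\vartheta'\vartheta^{-1}\in N^{\perp}=\varphi_{\varsigma}(P)$. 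The genuinely different step is the existence of the extension $\vartheta$: the paper takes an irreducible quotient $V$ of $\Ind^{Q}_{N}(\C_{\varsigma})$ (using Boyarchenko's results on $\ell_{c}$-groups to guarantee such a quotient) and applies Schur's lemma, together with $[Q,Q]\subseteq\ker(\varsigma)$, to see that $\dim V=1$; you instead pass to the locally compact abelian group $Q\slash\overline{[Q,Q]}$ and extend the character by Pontryagin duality. Your route is more elementary for this step (no appeal to the admissibility machinery), at the cost of the small points you left implicit but which are all available in the paper: $N=1+\CJ^{n}$ is closed in $Q$ (so $N\slash\overline{[Q,Q]}$ is a closed subgroup), $\varsigma$ is unitary because $N$ is an $\ell_{c}$-group, and the extended continuous character of $Q$ is automatically smooth. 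One cosmetic remark: your relation ``$c(\alpha a)\beta = c(a)\alpha\beta$'' should be read inside the fixed additive character of $\k^{+}$; \refeq{scalar} gives equality of the character values for all $\beta$, and non-degeneracy of the pairing then yields $c(\alpha a)=\alpha c(a)$.
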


\begin{proof}
We start by observing that $[Q,Q] \subseteq \ker(\varsigma)$. Indeed, since $\dim\CL = \CJ^{n}+1$, there exists $a \in \CL$ such that $\CL = \CJ^{n} \oplus \k a$, and hence $Q = (1+\k a)N$. Since $[1+\alpha a, 1+\beta a] = 1$ for all $\alpha,\beta \in \k$, we see that $\varsigma([1+\k a, 1+\k a]) = \{1\}$, and this clearly implies that $\varsigma([Q,Q]) = \{1\}$ (because $\varsigma$ is $P$-invariant). Let $V$ be an irreducible quotient of the smoothly induced $Q$-module $\Ind^{Q}_{N}(\C_{\varsigma})$; as in the proof of \refl{restriction}, the existence of $V$ follows from \cite[Theorem~1.3]{Boyarchenko2011a} and \cite[Corollary~4.8]{Boyarchenko2011a}. Since $N$ is a normal subgroup of $Q$ and $\varsigma$ is $Q$-invariant, we have $x\phi = \varsigma(x)\phi$ for all $x \in N$ and all $\phi \in \Ind^{Q}_{N}(W)$, and thus $xv = \varsigma(x)v$ for all $x \in N$ and all $v \in V$. Since $[Q,Q] \subseteq \ker(\varsigma)$, it follows from Schur's lemma that $\dim V = 1$, and thus $V$ affords a character $\vartheta \in Q^{\circ}$ which clearly satisfies $\vartheta_{N} = \varsigma$.

In order to prove properties (i) and (ii), we consider the group homomorphism $\map{\varphi_{\varsigma}}{P}{Q^{\circ}}$ as defined in \refl{vphi}; we recall that $\varphi_{\varsigma}(P) \subseteq N^{\perp}$ and that $\ker(\varphi_{\varsigma}) = 1+\CJ_{\varsigma}$ where $\CJ_{\varsigma}$ is an ideal of $\CJ$ satisfying $\CJ^{2} \subseteq \CJ_{\varsigma}$ and $\dim \CJ_{\varsigma} \geq \dim \CJ - 1$. On the one hand, (i) follows because $P_{\vartheta'} = \ker(\varphi_{\varsigma}) = 1+\CJ_{\varsigma}$ for all $\vartheta' \in Q^{\circ}$ such that $\vartheta'_{N} = \varsigma$. On the other hand, let us assume that $P_{\vartheta} \neq P$ (hence, $\ker(\varphi) \neq P$ and $\CJ_{\varsigma} \neq \CJ$), and let $x \in P$ be such that $\varphi(x) \in Q^{\circ}$ is not identically equal to $1$. Let $a \in \CJ$ be such that $x = 1+a$; then, \refeq{scalar} implies that $\varphi_{\varsigma}(1+\alpha a) \in \varphi_{\varsigma}(P) = N^{\perp}$ for all $\alpha \in \k$. Since $N^{\perp} \cong (Q\slash N)^{\circ}$ and $Q\slash N \cong 1+(\CL\slash\CJ^{n}) \cong \k^{+}$, it is straightforward to show that the mapping $\alpha \mapsto \varphi_{\varsigma}(1+\alpha a)$ defines group isomorphism $\k^{+} \cong N^{\perp}$ (we recall that $\k$ is a self-dual field). In particular, it follows that $N^{\perp} = \set{\varphi_{\varsigma}(1+\alpha a)}{\alpha \in \k}$, and so the map $\map{\varphi_{\varsigma}}{P}{N^{\perp}}$ is surjective and $P\slash P_{\vartheta} \cong N^{\perp} \cong \k^{+}$.

To conclude the proof of (ii), let $\vartheta' \in Q^{\circ}$ be such that $\vartheta'_{N} = \varsigma$, and consider the character $\vartheta'\vartheta^{-1} \in Q^{\circ}$. It is obvious that $\vartheta'\vartheta^{-1} \in N^{\perp}$, and thus there exists $\alpha \in \k$ such that $\vartheta'\vartheta^{-1} = \varphi_{\varsigma}(1+\alpha a)$. If we set $g = (1+\alpha a)^{-1}$, then
\begin{align*}
\vartheta'(x)\vartheta(x)^{-1} &= \varsigma([g^{-1},x^{-1}]) = \varsigma(gxg^{-1}x^{-1}) \\ &= \vartheta(gxg^{-1}x^{-1}) = \vartheta(gxg^{-1})\vartheta(x)^{-1},
\end{align*}
and hence $\vartheta'(x) = \vartheta(gxg^{-1})$ for all $x \in Q$, as required.
\end{proof}

\begin{proposition} \label{extension'}
Let $\varsigma \in N^{\circ}$ be $P$-invariant, and let $\vartheta \in Q^{\circ}$ be such that $\vartheta_{N} = \varsigma$. Then, $G_{\vartheta}$ is the unit group of some subalgebra of $\CA$.
\end{proposition}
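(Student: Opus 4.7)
My plan is to split the proof into two cases according to whether $\vartheta$ is $P$-invariant. In \textbf{Case A} ($\vartheta$ is $P$-invariant, equivalently $\CJ_{\varsigma} = \CJ$ and $P_{\vartheta} = P$), I would argue exactly as in the proof of Proposition \ref{main1}: since $P$ is normal in $G$, for every $t \in T$ the character $\vartheta^{t}$ remains $P$-invariant, so for $g = tp$ with $t \in T$, $p \in P$ we have $\vartheta^{g} = (\vartheta^{t})^{p} = \vartheta^{t}$, giving $G_{\vartheta} = T_{\vartheta} \cdot P$. Proposition \ref{centraliser1} (applied to the ideal subgroup $Q$) gives $T_{\vartheta} = \CD_{\vartheta}^{\times}$ for a subalgebra $\CD_{\vartheta} \subseteq \CD$, so $\CA_{\vartheta} := \CD_{\vartheta} \oplus \CJ$ is a subalgebra of $\CA$ with $\CA_{\vartheta}^{\times} = G_{\vartheta}$.

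In \textbf{Case B} ($\vartheta$ is not $P$-invariant), Lemma \ref{vphi} and Proposition \ref{extension}(ii) yield $\dim \CJ_{\varsigma} = \dim \CJ - 1$, and tell us that every extension of $\varsigma$ in $Q^{\circ}$ is $P$-conjugate to $\vartheta$. Fix $a \in \CJ$ with $\CJ = \CJ_{\varsigma} \oplus \k a$. My strategy is first to exhibit an explicit subalgebra $\CA^{*} := \CD_{\varsigma} \oplus \CJ_{\varsigma}$ with $(\CA^{*})^{\times} = T_{\varsigma} \cdot P_{\vartheta}$ (where $\CD_{\varsigma} \subseteq \CD$ is the subalgebra with $T_{\varsigma} = \CD_{\varsigma}^{\times}$ furnished by Proposition \ref{centraliser1} applied to $N$ and $\varsigma$), and then to produce $q_{0} \in P$ so that $G_{\vartheta} = q_{0}(\CA^{*})^{\times} q_{0}^{-1}$, whence $\CA_{\vartheta} := q_{0} \CA^{*} q_{0}^{-1}$ is the desired subalgebra. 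To verify that $\CA^{*}$ is a subalgebra, a direct commutator calculation shows $T_{\varsigma}$ normalizes $\CJ_{\varsigma}$: for $t \in T_{\varsigma}$, $x \in \CJ_{\varsigma}$, and $u \in \CL$,
\[
\varsigma([1 + t x t^{-1}, 1 + u]) = \varsigma^{t}([1 + x, 1 + t^{-1} u t]) = \varsigma([1 + x, 1 + t^{-1} u t]) = 1,
\]
using $\varsigma^{t} = \varsigma$, $t^{-1} u t \in \CL$, and $x \in \CJ_{\varsigma}$; writing $t = \sum_{i} \lambda_{i} f_{i}$ in the primitive-idempotent decomposition $\CD_{\varsigma} = \bigoplus_{i} \k f_{i}$ and varying $\lambda_{i} \in \k^{\times}$, a Vandermonde-type argument then upgrades $t \CJ_{\varsigma} t^{-1} \subseteq \CJ_{\varsigma}$ to $f_{i} \CJ_{\varsigma} f_{j} \subseteq \CJ_{\varsigma}$ for all $i,j$, whence $\CD_{\varsigma} \CJ_{\varsigma} + \CJ_{\varsigma} \CD_{\varsigma} \subseteq \CJ_{\varsigma}$. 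Combined with $\CJ_{\varsigma}$ being an ideal of $\CJ$ (Lemma \ref{vphi}), this shows $\CA^{*}$ is a subalgebra of $\CA$ with unit group $T_{\varsigma} \cdot P_{\vartheta}$.

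For the second sub-step, let $q_{0} = 1 + \beta_{0} a$ with $\beta_{0} \in \k$ to be determined. Expanding $q_{0}^{-1} t q_{0} \pmod{\CJ_{\varsigma}}$ for $t \in T_{\varsigma}$, the conjugation $q_{0}^{-1} G_{\vartheta} q_{0} = T_{\varsigma} P_{\vartheta}$ holds (with equality by dimension count from the inclusion $\subseteq$) precisely when $\alpha(t) = \beta_{0}(\lambda(t)^{-1} - 1)$ for every $t \in T_{\varsigma}$; here $\alpha(t) \in \k$ is the unique scalar with $t(1 + \alpha(t) a) \in G_{\vartheta}$ (from the proof of Proposition \ref{extension}) and $\lambda \colon T \to \k^{\times}$ is the character giving $t a t^{-1} \equiv \lambda(t) a \pmod{\CJ_{\varsigma}}$. \textbf{The main obstacle} is establishing the existence of such a $\beta_{0}$, i.e., showing that $\alpha$ is a coboundary for the $T_{\varsigma}$-action on $\k$ via $\lambda^{-1}$. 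I would verify this through an explicit Fourier-theoretic computation from the defining identity $\varphi_{\varsigma}(1 + \alpha(t) a) = \vartheta^{t} \vartheta^{-1}$: identifying both sides as elements of $N^{\perp} \cong \k^{+}$ via Pontryagin duality, one reads off that $\alpha(t)$ is proportional to $\lambda(t)^{-1} - 1$, with the proportionality constant $\beta_{0}$ given by the Pontryagin-dual ratio of the characters induced by $\vartheta$ on $\CL/\CJ^{n}$ and by $\varsigma$ on $\CJ^{n}$. With $\beta_{0}$ so determined, $\CA_{\vartheta} := q_{0} \CA^{*} q_{0}^{-1}$ is a subalgebra of $\CA$ with $\CA_{\vartheta}^{\times} = G_{\vartheta}$.
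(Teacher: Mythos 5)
Your overall architecture is the same as the paper's: Case A is exactly the reduction via \refp{main1}, and in Case B you, like the paper, first realise $T_{\varsigma}P_{\vartheta}$ as the unit group of $\CD_{\varsigma}\oplus\CJ_{\varsigma}$ and then conjugate $G_{\vartheta}$ onto it. The decisive gap is in your first sub-step. Conjugation by $t=\sum_{i}\lambda_{i}f_{i}\in T_{\varsigma}$ multiplies the homogeneous component $f_{i}xf_{j}$ of $x\in\CJ_{\varsigma}$ by $\lambda_{i}\lambda_{j}^{-1}$, and this weight is identically $1$ whenever $i=j$; hence varying $t$ and applying Vandermonde can only give $f_{i}xf_{j}\in\CJ_{\varsigma}$ for $i\neq j$ together with $\sum_{i}f_{i}xf_{i}\in\CJ_{\varsigma}$ --- no choice of the $\lambda_{i}$ separates the diagonal components. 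That is strictly weaker than $\CD_{\varsigma}\CJ_{\varsigma}+\CJ_{\varsigma}\CD_{\varsigma}\subseteq\CJ_{\varsigma}$ (take $d=f_{i}$: you need each $f_{i}xf_{i}\in\CJ_{\varsigma}$), and the implication you invoke is false in general: in $\k[x]/(x^{2})\times\k[y]/(y^{2})$ the hyperplane $\k(x+y)$ of the radical is conjugation-invariant (the torus acts trivially on the radical) but is not stable under left multiplication by the idempotents. The fact you need --- that $\CJ_{\varsigma}$ is stable under both one-sided multiplications by $\CD_{\varsigma}$, i.e.\ is an ideal of $\CD_{\varsigma}\oplus\CJ$ --- is exactly \refl{ideal} applied inside $\CA_{\varsigma}=\CD_{\varsigma}\oplus\CJ$, where $\varsigma$ is invariant under the whole unit group $T_{\varsigma}P$; its proof is a page-long induction exploiting the homogeneous generator $u=e_{i}ue_{j}$ of $\CL$ modulo $\CJ^{n}$, and your two lines skip precisely this hardest ingredient.

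The second sub-step is also not justified as written, although the identity $\alpha(t)=\beta_{0}(\lambda(t)^{-1}-1)$ is the correct reformulation of what the paper obtains by conjugating the image of $G_{\vartheta}$ in the mirabolic subgroup of $\GL_{2}(\k)$ to the diagonal torus. The character $\vartheta$ does not induce a character of $\CL\slash\CJ^{n}$ (it is nontrivial on $N$), and $\beta\mapsto\vartheta(1+\beta u)$ is not additive (its defect is $\varsigma$ of terms involving $u^{2}$), so there is no ``Pontryagin-dual ratio'' to read off; also the correct identity is $\vartheta^{t}\vartheta^{-1}=\varphi_{\varsigma}(x(t))^{-1}$, not $\varphi_{\varsigma}(x(t))$. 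What the identity (or, as in the paper, the homomorphism $t\mapsto tx(t)P_{\vartheta}$) actually yields is the cocycle relation $\alpha(ts)=\lambda(s)^{-1}\alpha(t)+\alpha(s)$; commutativity of $T_{\varsigma}$ then gives $(\lambda(s)^{-1}-1)\alpha(t)=(\lambda(t)^{-1}-1)\alpha(s)$, so as soon as some $s_{0}$ has $\lambda(s_{0})\neq1$ you may take $\beta_{0}=\alpha(s_{0})(\lambda(s_{0})^{-1}-1)^{-1}$, and this coboundary statement (which would repair your step, and is in fact shorter than the paper's semisimplicity argument) also forces $\alpha$ to vanish on $\ker\lambda$. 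But you state neither the cocycle relation nor this symmetrisation, and you do not treat the degenerate possibility that $\lambda$ is trivial on $T_{\varsigma}$ while $T_{\vartheta}\subsetneq T_{\varsigma}$, in which case no conjugation by $1+\beta_{0}a$ can work; the paper excludes it by the root-of-unity/torsion argument showing $(T_{\varsigma})_{a}\subseteq T_{\vartheta}$. In short, both halves of Case B have gaps, and the first one is essential.
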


\begin{proof}
In the case where $\vartheta$ is $P$-invariant, the result follows by \refp{main1}: hence, we assume that $P_{\vartheta} \neq P$. Let $\CD$ be the diagonal subalgebra of $\CA$, and let $T = \CD^{\times}$ be the diagonal subgroup of $G$. By \refp{centraliser1}, we know that $T_{\varsigma}$ is the unit group of some subalgebra $\CD_{\varsigma}$ of $\CD$; similarly, $T_{\vartheta}$ is the unit group of some subalgebra $\CD_{\vartheta}$ of $\CD$.% For simplicity of writing, we set $T(\varsigma) = C_{T}(\varsigma)$ and $T(\vartheta) = C_{T}(\vartheta)$; similarly, we also write $G(\varsigma) = C_{G}(\varsigma)$ and $G_{\vartheta} = G_{\vartheta}$.

Since $\varsigma$ is $P$-invariant, we see that the $G_{\varsigma}$ of $\varsigma$ is the unit group of the subalgebra $\CA_{\varsigma} = \CD_{\varsigma} \oplus \CJ$ of $\CA$; indeed, we have $G_{\varsigma} = T_{\varsigma}P$. Let $\CJ_{\varsigma}$ be the ideal of $\CJ$ defined as in \refl{vphi}, and note that $1+\CJ_{\varsigma} = P_{\vartheta}$ is the centraliser of $\vartheta$ in $P$ (by the previous proposition, because $\vartheta_{N} = \varsigma$. Since $\varsigma$ is $G_{\varsigma}$-invariant, \refl{ideal} implies that $\CJ_{\varsigma}$ is an ideal of $\CA(\varsigma)$, and thus $T_{\varsigma} P_{\vartheta}$ is the unit group of the subalgebra $\CB_{\varsigma} = \CD_{\varsigma} \oplus \CJ_{\varsigma}$ of $\CA_{\varsigma}$ (and hence of $\CA$). We also observe that $G_{\vartheta} \subseteq G_{\varsigma}$ (because $\vartheta_{N} = \varsigma$), and that $T_{\vartheta}P_{\vartheta}$ is a normal subgroup of $G_{\vartheta}$ (but not necessarily the unit group of a subalgebra of $\CA$).

Since $\CJ_{\varsigma}$ is an ideal of $\CJ$ with $\dim\CJ_{\varsigma} = \dim\CJ-1$, there exists $a \in \CJ$ such that $\CJ = \CJ_{\varsigma} \oplus \k a$ and $\CD a = a\CD = \k a$ (see \refl{bimodule}). %; hence, we have $P = (1+\k a)P_{0}$. %) and such that $\CD indeed, we may choose $a \in e_{i}\CJ e_{j}$ where $\seq{e}{n} \in \CD$ are non-zero orthogonal idempotents such that $\CD = \k e_{1} \oplus \cdots \oplus \k e_{n}$ and where $1 \leq i, j \leq n$.
Since $G_{\vartheta} \subseteq G_{\varsigma} = T_{\varsigma}P$ and $P_{\vartheta} \subseteq G_{\vartheta}$, every element $g \in G_{\vartheta}$ is uniquely written as a product $g = tx$ for $t \in T_{\varsigma}$ and $x \in 1+\k a$. In fact, for every $t \in T_{\varsigma}$, there is a unique element $x(t) \in 1+\k a$ such that $tx(t) \in G_{\vartheta}$. To see this, let $t \in T_{\varsigma}$ be arbitrary. Then, $\vartheta^{t} \in Q^{\circ}$ satisfies $(\vartheta^{t})_{N} = \varsigma^{t} = \varsigma$, and thus \refp{extension} implies that $\vartheta^{t} = \vartheta^{x}$ for some $x \in P$. Therefore, $\vartheta^{tx^{-1}} = \vartheta$, and hence $tx^{-1} \in G_{\vartheta}$. Since $P = (1+\k a) (1+\CJ_{\varsigma}) = (1+\k a) P_{\vartheta}$ and $P_{\vartheta} \subseteq G_{\vartheta}$, we have $x^{-1} \in x(t)P_{\vartheta}$ for some $x(t) \in 1+\k a$, and so $\vartheta^{tx(t)} = \vartheta^{xx(t)} = \vartheta$; notice that $x(t)$ is uniquely determined by $t \in T_{\varsigma}$.

Suppose that $T_{\varsigma} = T_{\vartheta}$. If this is the case, then $\vartheta^{x(t)} = \vartheta^{tx(t)} = \vartheta$, and thus $x(t) \in G_{\vartheta} \cap P = P_{\vartheta}$ for all $t \in T_{\varsigma}$. By the above, we conclude that $G_{\vartheta} = T_{\varsigma}P_{\vartheta}$ is the unit group of the subalgebra $\CB_{\varsigma}$ of $\CA$. Therefore, we henceforth assume that $T_{\varsigma} \neq T_{\vartheta}$.

For every $t \in T_{\varsigma}$, let $\alpha(t) \in \k$ be such that $x(t) = 1+\alpha(t)a$. It is straightforward to check that the mapping $tT_{\vartheta} \mapsto \alpha(t)$ defines an injective map $$\map{\alpha}{T_{\varsigma}\slash T_{\vartheta}}{\k}.$$ Since $T_{\varsigma} = (\CD_{\varsigma})^{\times}$ of $\CD$, the stabiliser $(T_{\varsigma})_{a} = \set{t \in T_{\varsigma}}{t^{-1} at = a}$ is the unit group of the subalgebra $(\CD_{\varsigma})_{a} = \set{d \in \CD_{\varsigma}}{da = ad}$ of $\CD_{\varsigma}$. Moreover, the mapping $d \mapsto da-ad$ defines a surjective $\k$-linear map $\CD_{\varsigma} \to \k a$ with kernel $(\CD_{\varsigma})_{a}$, and so $\dim (\CD_{\varsigma})_{a} = \dim \CD_{\varsigma}-1$. On the other hand, it is straightforward to check that $\alpha$ induces (by restriction) a group homomorphism $$\map{\widetilde{\alpha}}{\big((T_{\varsigma})_{a} T_{\vartheta}\big)\slash T_{\vartheta}}{\k^{+}}.$$ Since $\big((T_{\varsigma})_{a} T_{\vartheta}\big)\slash T_{\vartheta} \cong (T_{\varsigma})_{a}\slash (T_{\vartheta} \cap (T_{\varsigma})_{a}$ is, either the trivial group, or the direct product of a finite number of copies of the multiplicative group $\k^{\times}$ (because $(T_{\varsigma})_{a}$ and $T_{\vartheta} \cap (T_{\varsigma})_{a}$ are unit groups of subalgebras of $\CD$), we must have $(T_{\varsigma})_{a} = T_{\vartheta} \cap (T_{\varsigma})_{a}$; indeed, if we choose a root of unity $\zeta \in \k^{\times}$ of order coprime to the characteristic of the residue field of $\k$, then we must have $\widetilde{\alpha}(\zeta) = 0$. Therefore, we conclude that $(T_{\varsigma})_{a} \subseteq T_{\vartheta}$, and so $(\CD_{\varsigma})_{a} \subseteq \CD_{\vartheta} \subsetneq \CD_{\varsigma}$. Since $\dim (\CD_{\varsigma})_{a} = \dim \CD_{\varsigma}-1$, it follows that $\CD_{\vartheta} =(\CD_{\varsigma})_{a}$, and thus $T_{\vartheta} = (T_{\varsigma})_{a}$ and $T_{\varsigma}\slash T_{\vartheta} \cong \k^{\times}$.

Since $P_{\vartheta}$ is an ideal subgroup of $G_{\varsigma}$ and $P_{\vartheta} \subseteq G_{\vartheta} \subseteq G_{\varsigma}$, it is also a normal subgroup of $G_{\vartheta}$, and thus the mapping $t \mapsto (tx(t))P_{\vartheta}$ defines a bijection $$\map{\beta}{T_{\varsigma}}{G_{\vartheta}\slash P_{\vartheta}}.$$ Since $P$ is a normal subgroup of $G$, we have $$\big(tt' x(tt')\big) \big(t'x(t')\big)^{-1} \big(tx(t))^{-1} \in P \cap G_{\vartheta} = P_{\vartheta},$$ and so $\beta(tt') = \beta(t)\beta(t')$ for all $t,t' \in T_{\varsigma}$. It follows that $\beta$ is a group isomorphism, and hence $G_{\vartheta}\slash P_{\vartheta}$ is an abelian group. Therefore, we conclude that $\big(T_{\vartheta}P_{\vartheta}\big)\slash P_{\vartheta}$ is a normal subgroup of $G_{\vartheta}\slash P_{\vartheta}$, and thus $T_{\vartheta}P_{\vartheta}$ is a normal subgroup of $G_{\vartheta}$. Since $\beta(T_{\vartheta}) = \big(T_{\vartheta}P_{\vartheta}\big)\slash P_{\vartheta}$, we see that $\beta$ induces naturally a group isomorphism $$\map{\widetilde{\beta}}{T_{\varsigma}\slash T_{\vartheta}}{G_{\vartheta}\slash \big(T_{\vartheta}P_{\vartheta}\big)}.$$

Now, for every $t \in T_{\varsigma}$, we have $tat^{-1} \in \k a$, and hence there is $\lambda(t) \in \k^{\times}$ such that $tat^{-1} = \lambda(t) a$. The mapping $t \mapsto \lambda(t)$ defines a group homomorphism $\map{\lambda}{T_{\varsigma}}{\k^{\times}}$ with $\ker(\lambda) = (T_{\varsigma})_{a} = T_{\vartheta}$. On the other hand, since $\CJ = \CJ_{\varsigma} \oplus \k a$, for every $x \in P$, there exists $\mu(x) \in \k$ such that $x \in (1+\mu(x)a) P_{\vartheta}$, and the mapping $x \to \mu(x)$ defines a group homomorphism $\map{\mu}{P}{\k^{+}}$ with $\ker(\mu) = P_{\vartheta}$. Since every element $g \in T_{\varsigma}P$ is uniquely written as a product $g = tx$ for $t \in T_{\varsigma}$ and $x \in P$, we may define a map $\map{\psi}{T_{\varsigma}P}{\GL_{2}(\k)}$ by the rule $$\psi(tx) = \begin{bmatrix} \lambda(t) & \mu(x) \\ 0 & 1 \end{bmatrix},\qquad t \in T_{\varsigma},\ x \in P.$$ Since $(tx)(t'x') = (tt')((t')^{-1} xt'x')$ and $\mu((t')^{-1} xt'x') = \lambda(t')\mu(x)+\mu(x')$, we see that $\psi((tx)(t'x')) = \psi(tx)\psi(t'x')$ for all $t,t' \in T_{\varsigma}$ and all $x,x' \in P$, which means that $\psi$ is a group homomorphism. It is clear that $\ker(\psi) = T_{\vartheta}P_{\vartheta}$, and so $\psi$ induces a group isomorphism $\map{\widetilde{\psi}}{\big(T_{\varsigma}P\big)\slash \big(T_{\vartheta}P_{\vartheta}\big)}{M_{2}}$ where $M_{2}$ denotes the mirabolic subgroup $$M_{2} = \dset{\begin{bmatrix} r & s \\ 0 & 1 \end{bmatrix}}{r \in \k^{\times},\ s \in \k}$$ of $\GL_{2}(\k)$.

Finally, consider the image $M'_{2} = \widetilde{\psi}\big(G_{\vartheta}\slash \big(T_{\vartheta}P_{\vartheta}\big)\big)$; recall that $G_{\vartheta}$ is a subgroup of $T_{\varsigma}P$. Since there are group isomorphisms $$G_{\vartheta}\slash \big(T_{\vartheta}P_{\vartheta}\big) \cong T_{\varsigma}\slash T_{\vartheta} \cong \k^{\times},$$ we conclude that $M'_{2} \cong \k^{\times}$ is a commutative subgroup of $M_{2}$. For every $t \in T_{\varsigma}$, we have $tx(t) \in G_{\vartheta}$, and $$\psi(tx(t)) = \begin{bmatrix} \lambda(t) & \mu(x(t)) \\ 0 & 1 \end{bmatrix};$$ moreover, notice that the matrix $\psi(tx(t))$ is semisimple for all $t \in T_{\varsigma}\setminus T_{\vartheta}$. Therefore, since $M'_{2}$ is commutative and consists of semisimple matrices, there exists $x \in \GL_{2}(\k)$ such that $$x\begin{bmatrix} \lambda(t) & \mu(x(t)) \\ 0 & 1 \end{bmatrix}x^{-1} = \begin{bmatrix} \lambda(t) & 0 \\ 0 & 1 \end{bmatrix}$$ for all $t \in T_{\varsigma}$; in fact, we may choose $x \in M_{2}$. Let $g \in T_{\varsigma}P$ be such that $\psi(g) = x$. Then, $$\psi(gG_{\vartheta}g^{-1}) = xM'_{2}x^{-1} = \dset{\begin{bmatrix} \lambda(t) & 0 \\ 0 & 1 \end{bmatrix}}{t \in T_{\varsigma}} = \psi(T_{\varsigma}),$$ and thus $gG_{\vartheta}g^{-1} = T_{\varsigma}P_{\vartheta}$ is the unit group of the subalgebra $\CB_{\varsigma}$ of $\CA$. It follows that $G_{\vartheta}$ is the unit group of the subalgebra $g^{-1} \CB_{\varsigma} g$ of $\CA$, and this completes the proof.
\end{proof}

We are now able to prove our main result.

\begin{proof}[Proof of \reft{main}]
We proceed by induction on $\dim \CA$, the result being obvious if $\dim \CA = 1$. Therefore, we assume that $\dim \CA \geq 2$, and that the result is true whenever $\CA'$ is a subalgebra $\CA$ with $\dim\CA' \lneq \dim\CA$.

Let $V$ be an arbitrary irreducible smooth $G$-module, and let $V'$ be an irreducible quotient of $\Res^{G}_{P}(V)$ (the existence of which is guaranteed by \refl{restriction}). In spite of \refp{main1}, we may assume that $\dim V' \geq 2$. In this situation, there is an integer $m \geq 2$ such that $\CJ^{m} \neq \{0\}$ and $\CJ^{m+1} = \{0\}$; notice that $\CJ^{2} \neq \{0\}$ (otherwise, $P = 1+\CJ$ is abelian, and hence $V'$ must be one-dimensional). Since $1+\CJ^{m}$ lies in the centre of $P$, Schur's lemma implies that $1+\CJ^{m}$ acts on $V'$ by scalar multiplications, and thus we may choose the smallest positive integer $n$ for which there exists $\varsigma \in (1+\CJ^{n})^{\circ}$ such that $gv' = \varsigma(g)v'$ for all $g \in 1+\CJ^{n}$ and all $v' \in V'$. We note that, since $V'$ is an irreducible smooth $P$-module with $\dim V' \geq 2$, we must have $n \geq 2$; furthermore, since $[1+\CJ, 1+\CJ^{n-1}] \subseteq 1+\CJ^{n}$, the minimal choice of $n$ implies that $\varsigma$ is not identically equal to $1$ (otherwise, Schur's lemma would imply that the subgroup $1+\CJ^{n-1}$ acts on $V'$ by scalar multiplications). Since $\CJ^{n-1}$ and $\CJ^{n}$ are ideals of $\CA$, \refl{bimodule} implies that $\CJ^{n-1} = \CL_{1} + \cdots + \CL_{t}$ for some ideals $\seq{\CL}{t}$ of $\CA$ satisfying $\CJ^{n} \subseteq \CL_{i} \subseteq \CJ^{n-1}$ and $\dim\big(\CL_{i}\slash\CJ^{n}\big) = 1$ for all $1 \leq i \leq t$. By the minimal choice of $n$, we must have $\varsigma[1+\CJ, 1+\CL_{i}] \neq \{1\}$ for some $1 \leq i \leq t$ (otherwise, we would have $[1+\CJ, 1+\CJ^{n-1}] \subseteq \ker(\varsigma)$, and hence $1+\CJ^{n-1}$ would act on $V'$ by scalar multiplications).

%there are one-dimensional $\CD$-sub-bimodules $\seq{\CL}{t}$ of $\CJ^{n-1}$ such that $$\CJ^{n-1} = \CJ^{n} \oplus \CL_{1} \oplus \cdots \oplus \CL_{t};$$ and so $\varsigma[1+\CJ,1+\CL_{i}] \neq \{1\}$ for some $1 \leq i \leq m$ (otherwise, we would have $[1+\CJ, 1+\CJ^{n-1}] \subseteq \ker(\varsigma)$, and hence $1+\CJ^{n-1}$ acts on $V'$ by scalar multiplications).

Let $N = 1+\CJ^{n}$, and let $Q = 1+\CL$ where we set $\CL = \CL_{i}$. The argument used in the proof of \refl{restriction} shows the smooth $Q$-module $\Res^{P}_{Q}(V')$ has an irreducible quotient $V''$. Since $[Q,Q] \subseteq \ker(\varsigma)$, Schur's lemma implies that $V''$ is one-dimensional, and thus it affords a character $\vartheta \in Q^{\circ}$. [Notice that the extreme case where $n = 2$ and $\dim\CJ = \dim\CJ^{2}+1$ cannot occur; indeed, in this situation, we must have $Q = P$, and hence $V'' = V'$ which contradicts the assumption $\dim V' \geq 2$.] In particular, we have $V'_{\vartheta} \neq \{0\}$, and thus $V_{\vartheta} \neq \{0\}$ (by \cite[Proposition~2.35]{Bernstein1976a} because $P$ is an $\ell_{c}$-group). By \refl{orbit2}, $V_{\vartheta}$ is an irreducible $G_{\vartheta}$-module and we have $V = \CInd^{G}_{G_{\vartheta}}(V_{\vartheta})$. Since $N$ acts on $V'$ (hence, on $V''$) via the character $\varsigma$, we must have $\vartheta_{N} = \varsigma$, and thus $G_{\vartheta}$ is the unit group of some subalgebra $\CA'$ of $\CA$ (by \refp{extension'}). Since $\vartheta([P,Q]) = \varsigma([P,Q]) \neq \{1\}$, we must have $P_{\vartheta} \neq P$, and thus $G_{\vartheta} \neq G$. Therefore, we have $\dim\CA' \lneq \dim\CA$, and thus it follows by induction that there exists a subalgebra $\CB$ of $\CA'$ such that $V_{\vartheta} \cong \CInd_{H}^{G_{\vartheta}}(W)$ where $H = \CB^{\times}$ is the unit group of $\CB$ and $W$ is a one-dimensional $H$-module. By transitivity of c-induction (see \cite[Proposition~2.25(b)]{Bernstein1976a}), we conclude that $$V \cong \CInd^{G}_{G_{\vartheta}}\Big( \CInd^{G_{\vartheta}}_{H}(W) \Big) \cong \CInd^{G}_{H}(W),$$ as required.
\end{proof}

We conclude the present work with two remarks concerning with admissibility and unitarisability of an arbitrary irreducible smooth $G$-module. On the one hand, we aim to establish the following result. % We start by proving the following auxiliary result. %We maintain the notation of the previous proof; in particular, we let $Q$ be an ideal subgroup of $G$, and let $\vartheta \in Q^{\circ}$ be such that $V_{\vartheta} \neq \{0\}$ and $V = \CInd^{G}_{G_{\vartheta}}(V_{\vartheta})$. We also assume that $G_{\vartheta} = \CA(\vartheta)^{\times}$ is the unit group of a subalgebra $\CA(\vartheta)$ of $\CA$ such that the Jacobson radical of $\CA(\vartheta)$ has codimension at most one in $\CJ$. Moreover, we will always refer to the construction of the previous proof; more precisely, if $\CB$ is a subalgebra of $\CA$ such that $V = \CInd^{G}_{H}(W)$ where $H = \CB^{\times}$ is the unit group of $\CB$ and $W$ is a one-dimensional smooth $H$-module, then we will always assume that $\CB \subseteq \CA(\vartheta)$ (hence, $H \subseteq G_{\vartheta}$) and that $V_{\vartheta} = \CInd^{G_{\vartheta}}_{H}(W)$.

\begin{theorem} \label{admissible}
Let $V$ be an irreducible smooth $G$-module, and let $\CB$ be a subalgebra of $\CA$ such that $V \cong \CInd^{G}_{H}(W)$ where $H = \CB^{\times}$ is the unit group of $\CB$ and $W$ is a one-dimensional smooth $H$-module. Then, the following are equivalent.
\begin{enumerate}
\item $H$ contains the diagonal subgroup $T$ of $G$.
\item The smooth $G$-module $V$ is admissible.
\item There is an isomorphism of $G$-modules $V \cong \Ind^{G}_{H}(W)$.	
\end{enumerate}%Then, there exist an ideal subgroup $Q$ of $G$ and a smooth character $\vartheta\in Q^{\circ}$ with $V_{\vartheta} \neq \{0\}$ such that $H \subseteq G_{\vartheta}$ and $$V = \CInd^{G}_{G_{\vartheta}}(V_{\vartheta}) \qquad \text{and} \qquad V_{\vartheta} = \CInd^{G_{\vartheta}}_{H}(W).$$
\end{theorem}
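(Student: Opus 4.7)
The plan is to establish the three-way equivalence via the implications (i) $\Rightarrow$ (iii), (i) $\Rightarrow$ (ii), and the contrapositive of both (i) $\Rightarrow$ (ii) and (i) $\Rightarrow$ (iii). For (i) $\Rightarrow$ (iii), I will exploit the decomposition $G = HP$ (valid because $T \subseteq H$ and $G = TP$, with $H = T \ltimes P_H$ for $P_H = 1+\CJ(\CB)$). By a Mackey-type restriction to $P$ along the trivial double coset space $P\backslash G /H$, one obtains
$$\CInd^G_H(W)\big|_P \cong \CInd^P_{P_H}(W|_{P_H}) \quad \text{and} \quad \Ind^G_H(W)\big|_P \cong \Ind^P_{P_H}(W|_{P_H}).$$
By \cite[Theorem~1.3]{Boyarchenko2011a}, invoked in the paper's introductory discussion of algebra groups, the canonical inclusion of the left-hand side into the right is an isomorphism whenever one induces from an algebra subgroup of an algebra group. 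Since each element of $\Ind^G_H(W)$ is determined by its restriction to $P$ via the unique $HP$-factorization, this lifts back to $\CInd^G_H(W) = \Ind^G_H(W)$, giving (iii).

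For (i) $\Rightarrow$ (ii), I will use the same identification $V|_P \cong \CInd^P_{P_H}(W|_{P_H})$ together with the irreducibility of $V$ as a $G$-module: irreducibility controls the $T$-orbit structure on $P$-isotypic components of $V|_P$, yielding a decomposition of $V|_P$ into finitely many $T$-conjugate irreducible smooth $P$-modules, each admissible by \cite[Theorem~1.3]{Boyarchenko2011a}. Hence $V|_P$ is admissible as a $P$-module, and for an arbitrary compact open $K \subseteq G$ the inclusion $V^K \subseteq V^{K \cap P}$ combined with $\dim V^{K\cap P} < \infty$ gives admissibility of $V$.

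For the converse direction, assume $T \not\subseteq H$, so $T_H := T \cap H$ is proper in $T$. Let $T^\circ$ be the maximal compact open subgroup of $T$ and set $G^\circ := T^\circ H$; then $G^\circ$ is an open subgroup of $G$, the quotient $G^\circ/H$ is compact, so $V_0 := \CInd^{G^\circ}_H(W)$ is admissible and $V \cong \CInd^G_{G^\circ}(V_0)$. Mimicking the argument in \refex{example}, I will choose a sufficiently small compact open $K \subseteq G^\circ$ with $V_0^K \neq \{0\}$ and construct a sequence $(g_m)_{m\geq 0}\subseteq T$ representing distinct cosets of $G^\circ$ in $G$ and satisfying $g_m K g_m^{-1} \subseteq K$ for each $m$. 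The translates $g_m V_0^K$ then yield linearly independent summands of $V^K$ (contradicting (ii)), while the sum $\sum_{m\geq 0} g_m \phi$ for a fixed $\phi \in V_0^K$ of small support produces a smooth $H$-equivariant function on $G$ whose support is not compact modulo $H$ (contradicting (iii)).

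The main obstacle is producing the sequence $(g_m)$. Writing $g_m = \sum_i \varpi^{a_i(m)} e_i$ in terms of $\CD$'s primitive idempotents, the condition $g_m K g_m^{-1} \subseteq K$ for a standard congruence subgroup $K$ translates to $a_i(m) - a_j(m) \geq 0$ for all pairs $(i,j)$ with $e_i \CJ e_j \neq \{0\}$, while $g_m \notin G^\circ$ requires $(a_i(m))_i$ to be non-constant on at least one block of the partition of $\{1,\ldots,n\}$ defining $\CD_0$ and to grow unboundedly with $m$. The existence of such a sequence ultimately rests on a careful analysis of the quiver of $\CA$ (determined by non-vanishing of the $e_i \CJ e_j$) relative to the blocks of $\CD_0$, and this comparative quiver/partition analysis will be the most delicate step of the argument.
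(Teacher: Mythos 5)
Your forward implications rest on an input that the cited theorem does not provide. The pivotal claim in your (i) $\Rightarrow$ (iii) step --- that $\CInd^{P}_{P_{H}}(W|_{P_{H}}) = \Ind^{P}_{P_{H}}(W|_{P_{H}})$ holds ``whenever one induces from an algebra subgroup of an algebra group'', by \cite[Theorem~1.3]{Boyarchenko2011a} --- is false as a general principle: take $\CJ$ one-dimensional with $\CJ^{2}=\{0\}$, so $P = 1+\CJ \cong \k^{+}$, let $P_{H}=\{1\}$ (an algebra subgroup) and the character trivial; then $\CInd^{P}_{P_{H}}(\C)$ is the space of compactly supported smooth functions on $\k^{+}$, a proper submodule of $\Ind^{P}_{P_{H}}(\C)$. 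Boyarchenko's theorem asserts the coincidence only for the particular pairs produced by his construction (those that induce irreducibly); establishing it for the pair $(P_{H}, W|_{P_{H}})$ at hand is essentially the statement you are trying to prove. The same difficulty undercuts your (i) $\Rightarrow$ (ii): via $V^{K}\subseteq V^{K\cap P}$ you need admissibility of $\CInd^{P}_{P_{H}}(W|_{P_{H}})$, and your assertion that $\Res^{G}_{P}(V)$ decomposes into finitely many $T$-conjugate irreducible $P$-modules is exactly the hard point, stated without proof --- compact induction of a character from an algebra subgroup can fail admissibility badly (e.g.\ inducing a nontrivial central character of the Heisenberg group $1+\CJ(\CB_{3}(\k))$ from its centre), so the irreducibility of $V$ must enter in an essential, quantitative way. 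The paper instead proves (i) $\Rightarrow$ (ii) by induction on $\dim\CA$ (\refs{admissible2}, \refs{admissible3}), feeding the construction of \reft{main} into \cite[Th\'eor\`eme~4]{Rodier1977a} via closedness of the orbit $\vartheta^{G}=\vartheta^{P}$, and then deduces (i) $\Rightarrow$ (iii) by the duality theorem \cite[Theorem~3.5]{Bushnell2006a}, using $\delta_{G/H}=1$ (since $G\slash H\cong \CJ\slash\CJ(\CB)$) and admissibility to identify $V$ with $(V^{\vee})^{\vee}\cong\Ind^{G}_{H}(W)$.

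Your converse direction also has structural gaps. You set $G^{\circ}=T^{\circ}H$, but $T^{\circ}$ need not normalize $H$ (nothing makes $\CB$ stable under $T$-conjugation), so $G^{\circ}$ need not be a subgroup; and even when it is, it is not open in $G$ unless $\CJ(\CB)=\CJ$, since its unipotent part is only $1+\CJ(\CB)$ --- so the decomposition $V=\bigoplus_{g}gV_{0}$ from \cite[Lemma~2.5]{Bushnell2006a}, which is what makes \refex{example} work (there the inducing subgroup contains all of $P$), is unavailable. Moreover the step you yourself flag as delicate --- producing $g_{m}\in T$ escaping $G^{\circ}$ with $g_{m}Kg_{m}^{-1}\subseteq K$ --- is not carried out and is genuinely problematic: whenever $e_{i}\CJ e_{j}\neq\{0\}\neq e_{j}\CJ e_{i}$ the contraction condition forces $a_{i}(m)=a_{j}(m)$, and nothing in your sketch rules out that the only coordinates you need to separate are tied together in this way; there are also unresolved smoothness issues for the infinite sum $\sum_{m}g_{m}\phi$, since $g_{m}\phi$ is only invariant under the shrinking groups $g_{m}Kg_{m}^{-1}$. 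The paper's proof of (ii),(iii) $\Rightarrow$ (i) (\refs{admissible1}) avoids all of this with a soft factorization: write $\CD=\CD'\oplus\CD''$, $T=T'T''$, $G=G'T''$ with $H\subseteq G'=T'P$; a function in $\Ind^{G}_{G'}(V')$ is determined by its restriction to $T''\cong(\k^{\times})^{r}$ and lies in $\CInd^{G}_{G'}(V')$ exactly when that restriction has compact support, so irreducibility of $\Ind^{G}_{H}(W)$ (for (iii)), or of $V^{\vee}\cong\Ind^{G}_{H}(\delta_{G/H}\otimes W^{\vee})$ (for (ii)), forces $r=0$, i.e.\ $T\subseteq H$. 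As it stands, each of your three directions needs either this kind of argument or a complete proof of the claims you currently only assert.
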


\begin{proof}
For simplicity of reading, we consider several (independent) steps; in the first step, we establish that both (ii) and (iii) imply (i).

\begin{step} \label{admissible1}
%Let $V$ be an irreducible smooth $G$-module, let $\CB$ be a subalgebra of $\CA$ such that $V \cong \CInd^{G}_{H}(W)$ where $H = \CB^{\times}$ is the unit group of $\CB$ and $W$ is a one-dimensional smooth $H$-module, and
Assume that, either $V$ is admissible, or $V \cong \Ind^{G}_{H}(W)$. Then, $T \subseteq H$.%\end{enumerate}%Then, there exist an ideal subgroup $Q$ of $G$ and a smooth character $\vartheta\in Q^{\circ}$ with $V_{\vartheta} \neq \{0\}$ such that $H \subseteq G_{\vartheta}$ and $$V = \CInd^{G}_{G_{\vartheta}}(V_{\vartheta}) \qquad \text{and} \qquad V_{\vartheta} = \CInd^{G_{\vartheta}}_{H}(W).$$
\end{step}

\begin{proof}
Let $\CD'$ be the diagonal subalgebra of $\CB$, and let $T' = (\CD')^{\times}$ be the unit group of $\CD'$ (hence, $T'$ is the diagonal subgroup of $H$). On the other hand, let $\CD$ be the diagonal subalgebra of $\CA$, and note that $\CD$ is clearly a $\CD'$-bimodule. Therefore, \refl{bimodule} implies that $\CD = \CD' \oplus \CD''$ for some sub-bimodule $\CD''$ of $\CD$, and thus $T = T'T''$ where $T'' = (\CD'')^{\times}$ is the unit group of $\CD''$. Then, $G$ decomposes as the semidirect product $G = G'T''$ where $G' = T'P$; moreover, we have $\CB = \CD'\oplus\CJ(\CB)$ where $\CJ(\CB)$ denotes the Jacobson radical of $\CB$, and thus $H \subseteq G'$.

Let $V' = \CInd^{G'}_{H}(W)$, so that $V \cong \CInd^{G}_{H}(W) \cong \CInd^{G}_{G'}(V')$; notice also that $\Ind^{G}_{H}(W) \cong \Ind^{G}_{G'}(V')$. Since $G = G'T''$ is a semidirect product, every element of $G$ is uniquely factorised as a product $gt$ for $g \in G'$ and $t \in T''$, and hence every function $\phi \in \Ind^{G}_{G'}(W')$ is uniquely determined by the rule $$\phi(gt) = g\phi(t),\qquad g \in G',\ t \in T'';$$ in particular, a function $\phi \in \Ind^{G}_{G'}(W')$ lies in $\CInd^{G}_{G'}(W')$) if and only if its restriction to $T''$ has compact support. Since $T'' \cong (\k^{\times})^{r}$ for some nonnegative integer $r \geq 0$, we conclude that $\Ind^{G}_{G'}(V') = \CInd^{G}_{G'}(V')$ if and only if $T'' = \{1\}$ (that is, if and only if $r = 0$); in other words, we have $\Ind^{G}_{G'}(V') = \CInd^{G}_{G'}(V')$ if and only if $G = G'$ which occurs if and only if $T \subseteq H$. % that mapping its restriction $\phi_{T''}$; indeed, we have $\phi(gt) = g\phi(t)$ for all  to $T''$; indeed, we have $\phi(g't'') = g'\phi(t'')$ for all $\phi \in \Ind^{G}_{G'}(W')$, all $g' \in G'$, and all $t'' \in T''$.
Finally, suppose that $V \cong \Ind^{G}_{H}(W) \cong \Ind^{G}_{G'}(V')$. Then, $\Ind^{G}_{G'}(V')$ is an irreducible smooth $G$-module, and so $\Ind^{G}_{G'}(V') = \CInd^{G}_{G'}(V')$. By the above, we conclude that $T \subseteq H$, as required.

%furthermore, for every $\phi \in \Ind^{G}_{G'}(W')$, we have $\phi \in \CInd^{G}_{G'}(W')$ if and only if $\phi_{T''}$ has compact support. Since $T'' \cong (\k^{\times})^{r}$ for some integer $r \geq 0$, we conclude that $\Ind^{G}_{G'}(W') = \CInd^{G}_{G'}(W')$ if and only if $T'' = \{1\}$ (that is, if and only if $r = 0$); in other words, we have $\Ind^{G}_{G'}(W') = \CInd^{G}_{G'}(W')$ if and only if $G = G'$ if and only if $T \subseteq H$.

%Firstly, assume that $V \cong \Ind^{G}_{H}(W)$; hence, $\Ind^{G}_{H}(W)$ is an irreducible smooth $G$-module, and thus $\Ind^{G}_{H}(W) = \CInd^{G}_{H}(W)$. In particular, every function $\phi \in \Ind^{G}_{H}(W)$ has compact support, and so its restriction to $T''$ also has compact support. Since $T'' \cong (\k^{\times})^{r}$ for some nonnegative integer $r \geq 0$, we conclude that $T'' = \{1\}$ (that is, $r = 0$), and thus $T \subseteq H$, as required.

On the other hand, suppose that $V \cong \CInd^{G}_{H}(W)$ is admissible. Let $\map{\delta_{G}}{G}{\R_{+}^{\times}}$ and $\map{\delta_{H}}{H}{\R_{+}^{\times}}$ be the unimodular characters of $G$ and $H$, respectively; for the definition, we refer to  \cite[Section~3.3]{Bushnell2006a}. If $V^{\vee}$ denotes the smooth dual of $V$ (see \cite[Section~2.8]{Bushnell2006a}), then \cite[Theorem~3.5]{Bushnell2006a} implies that there is a natural isomorphism $$V^{\vee} \cong (\CInd^{G}_{H}(W))^{\vee} \cong \Ind^{G}_{H}(\delta_{G\slash H} \otimes W^{\vee})$$ where $\delta_{G/H} = (\delta_{H})^{-1} (\delta_{G})_{H}$ and where the smooth $H$-module $\delta_{G\slash H} \otimes W^{\vee}$ has underlying vector space equal to $W^{\vee}$ and $H$-action defined by $$h\cdot w^{\vee} = \delta_{G\slash H}(h)(hw^{\vee}), \qquad h \in H,\ w^{\vee} \in W^{\vee}.$$ Since $V$ is admissible, the smooth dual $V^{\vee}$ is irreducible (see \cite[Proposition~2.10]{Bushnell2006a}), and so the smooth $G$-module $\Ind^{G}_{H}(\delta_{G\slash H} \otimes W^{\vee})$ is also irreducible. By the above, we conclude that $T \subseteq H$, and this completes the proof.
\end{proof}

We next prove that (i) implies both (ii) in the particular situation where the subalgebra $\CB$ has codimension one in $\CA$.

\begin{step} \label{admissible2}
%Let $V$ be an irreducible smooth $G$-module, and let $\CB$ be a subalgebra of $\CA$ such that $V \cong \CInd^{G}_{H}(W)$ where $H = \CB^{\times}$ is the unit group of $\CB$ and $W$ is a one-dimensional smooth $H$-module. 
Let $\CJ_{0} = \CJ(\CB)$ denote the Jacobson radical of $\CB$, and suppose that $\dim\CJ_{0} = \dim\CJ -1$ where $\CJ = \CJ(\CA)$. Moreover, assume that $T \subseteq H$. Then, the smooth $G$-module $V$ is admissible.%, and there is an isomorphism of $G$-modules $V \cong \Ind^{G}_{H}(W)$.
\end{step}

\begin{proof}
Firstly, we note that $\CJ^{2} \subseteq \CJ_{0}$; otherwise, since $\dim\CJ = \dim\CJ_{0}+1$, we must have $\CJ_{0} + \CJ^{2} = \CJ$, and so Nakayama's Lemma (see, for example, \cite[Theorem~2.3]{Matsumura1989a}) implies that $\CJ_{0} = \CJ$. Now, let $P_{0} = 1+\CJ_{0}$, and note that $P_{0}$ is an ideal subgroup of $G$ with $H = P_{0}T$ (because $T \subseteq H$). Let $\tau \in (P_{0})^{\circ}$ be the character of $P_{0}$ afforded by the one-dimensional smooth $P_{0}$-module $\Res^{H}_{P_{0}}(W)$, and consider the irreducible smooth $G_{\tau}$-module $V_{\tau}$; notice that $V \cong \CInd^{G}_{G_{\tau}}(V_{\tau})$. %(see \refl{}). % (otherwise, $V \cong \CInd^{G}_{C_{G}(\tau)}(V_{\tau})$ is one-dimensional).
Since $H$ is clearly a subgroup of $G_{\tau}$, there are isomorphisms $$V \cong \CInd^{G}_{H}(W) \cong \CInd^{G}_{G_{\tau}}(\CInd^{G_{\tau}}_{H}(W));$$ in particular, we conclude that the smooth $G_{\tau}$-module $\CInd^{G_{\tau}}_{H}(W)$ is irreducible. Since $x\phi = \tau(x)\phi$ for all $x \in P_{0}$ and all $\phi \in \CInd^{G_{\tau}}_{H}(W)$, it follows from \cite[Corollaire~2 au Th\'eor\`eme~3]{Rodier1977a} that $$\CInd^{G_{\tau}}_{H}(W) \cong V_{\tau};$$ note that, as in the proof of \refl{orbit2}, we should consider the closure $\overline{[P,P]}$ of the commutator subgroup of $P$ and replace $G$ by the quotient group $G \slash \overline{[P,P]}$ and $P$ by $P \slash \overline{[P,P]}$ (see also \cite[Theorem~4.13]{Boyarchenko2011a}). Since $xv = \tau(x)v$ for all $x \in P_{0}$ and all $v \in V_{\tau}$, it follows that $\Res^{G_{\tau}}_{T}(V_{\tau})$ is irreducible, and hence $V_{\tau}$ is one-dimensional (by Schur's Lemma); in particular, we conclude that $H = G_{\tau}$ and that $W \cong V_{\tau}$.% Finally, we deduce that $T \subseteq H$ if and only if $G(\vartheta) = G$ if and only if $V \cong V_{\vartheta}$ is one-dimensional, and so the conditions (i), (ii) and (iii) are obviously equivalent.
%\vskip2cm

On the other hand, since $V \cong \CInd^{G}_{H}(W)$ and since $\CJ^{2} \subseteq \CJ_{0}$, we see that $xv = \tau(x)v$ for all $x \in 1+\CJ^{2}$ and all $v \in V$. Since $\dim V \geq 2$, the proof of \reft{main} guarantees that there exists an ideal $\CL$ of $\CA$ satisfying $\CJ^{2} \subseteq \CL \subseteq \CJ$ and $\dim(\CL\slash \CJ^{2}) = 1$, and such that $$V \cong \CInd^{G}_{G_{\vartheta}}(V_{\vartheta})$$ for some smooth character $\vartheta \in (1+\CL)^{\circ}$. By the construction of $\CL$, it is clear that $\CL \not\subseteq \CJ_{0}$, and thus $\CJ = \CJ_{0}+\CL$ (because $\dim\CJ_{0} = \dim\CJ -1$); furthermore, it follows from \refp{extension} that $\varsigma = \vartheta_{1+\CJ^{2}}$ is a $P$-invariant smooth character of $1+\CJ^{2}$ such that $P_{\vartheta} = 1+\CJ_{\varsigma}$ where $$\CJ_{\varsigma} = \set{a \in \CJ}{\varsigma([1+a,1+u]) = 1 \all u \in \CL}$$ is an ideal of $\CA$ with $\dim\CJ_{\varsigma} = \dim\CJ-1$ (see also \refl{vphi}). %,To see this, we note that $\CL' \not\subseteq \CJ_{0}$ (by the construction of $\CL'$), and thus we must have $\CJ = \CJ_{0}+\CL'$ (because $\dim\CJ_{0} = \dim\CJ -1$). Furthermore, it follows from \refp{extension} that $\vartheta'$ is an extension of $\varsigma = \vartheta_{1+\CJ^{2}}$, and that $C_{P}(\vartheta') = 1+\CJ_{\varsigma}$ where $\CJ_{\varsigma} \subseteq \CJ$ is an ideal of $\CA$ with $\dim\CJ_{\varsigma} = \dim\CJ-1$.
We claim that $$(G_{\vartheta})_{\tau} = G_{\vartheta} \cap G_{\tau} = T(P_{0}\cap P_{\vartheta});$$ the inclusion $G_{\vartheta} \cap G_{\tau} \subseteq T(P_{\vartheta} \cap P_{0})$ is clear because $G_{\vartheta} \subseteq TP_{\vartheta}$ and $G_{\tau} = H = TP_{0}$. For the reverse inclusion, let $t \in T$ be arbitrary, and recall from \refp{extension} that %$\vartheta'$ is an extension of $\varsigma = \vartheta_{1+\CJ^{2}}$, %that $C_{P}(\vartheta') = 1+\CJ_{\varsigma}$ where $\CJ_{\varsigma} \subseteq \CJ$ is an ideal of $\CA$ with $\dim\CJ_{\varsigma} = \dim\CJ-1$, 
the $P$-orbit $\vartheta^{P} \subseteq (1+\CL)^{\circ}$ of $\vartheta$ consists of all $\vartheta' \in (1+\CL)^{\circ}$ which satisfy $(\vartheta')_{1+\CJ^{2}} = \varsigma$. In particular, since $\vartheta(txt^{-1}) = \tau(txt^{-1}) = \tau(x)$ for all $x \in 1+\CJ^{2}$, we conclude that $\vartheta^{t} = \vartheta^{x}$ for some $x \in P$; indeed, since $\CJ = \CJ_{0} + \CL$, we have $P = P_{0}(1+\CL)$, and thus $\vartheta^{P} = \vartheta^{P_{0}}$. It follows that $\vartheta^{t} = \vartheta^{x}$ for some $x \in P_{0}$, and thus $tx^{-1} \in G_{\vartheta}$. Since $G_{\vartheta} \subseteq TP_{\vartheta}$, we conclude that $x^{-1} \in P_{\vartheta}$, and thus $\vartheta^{t} = \vartheta^{x} = \vartheta$, that is, $t \in G_{\vartheta}$.

Now, let $\sigma \in (P_{\vartheta} \cap P_{0})^{\circ}$ be the restriction of $\tau$ to $P_{\vartheta} \cap P_{0}$, and note that $(G_{\vartheta})_{\sigma} = (G_{\vartheta})_{\tau}$; thus, we have $$V_{\vartheta} \cong \CInd^{G_{\vartheta}}_{(G_{\vartheta})_{\tau}}((V_{\vartheta})_{\sigma}).$$ Since $xv = \sigma(x)v = \tau(x)v$ for all $x \in P_{\vartheta} \cap P_{0}$ and all $v \in (V_{\vartheta})_{\sigma}$, we conclude that every vector subspace of $(V_{\vartheta})_{\sigma}$ is $(P_{\vartheta}\cap P_{0})$-invariant, and this implies that the restriction $\Res^{(G_{\vartheta})_{\tau}}_{T}((V_{\vartheta})_{\sigma})$ is an irreducible smooth $T$-module. Since $T$ is an abelian $\ell$-group, Schur's lemma implies that the smooth $(G_{\vartheta})_{\tau}$-module $(V_{\vartheta})_{\sigma}$ is one-dimensional. Since $G_{\vartheta} = TP_{\vartheta}$ is the unit group of a proper subalgebra of $\CA$, we may use induction to conclude that the smooth $G_{\vartheta}$-module $V_{\vartheta}$ is an admissible. [We note that $(V_{\vartheta})_{\sigma} \cong \Res^{H}_{H_{\varsigma}}(W)$ where $H_{\varsigma} = T(P_{\vartheta} \cap P_{0})$.] %, and also that $$V_{\vartheta} \cong \Ind^{G_{\vartheta}}_{C_{G_{\vartheta}}(\tau)}((V_{\vartheta})_{\sigma}).$$
Since $G_{\vartheta} = TP_{\vartheta}$, we see that $\vartheta^{G} = \vartheta^{P} = \set{\vartheta' \in (1+\CL)^{\circ}}{(\vartheta')_{1+\CJ^{2}} = \varsigma}$ is a closed subset of $(1+\CL')^{\circ}$, and thus it follows from \cite[Th\'eor\`eme~4]{Rodier1977a} that the smooth $G$-module $V \cong \CInd^{G}_{G_{\vartheta}}(V_{\vartheta})$ is admissible, as required.
%
%In order to prove that $V \cong \Ind^{G}_{H}(W)$, let $\map{\delta_{G}}{G}{\R_{+}^{\times}}$ and $\map{\delta_{H}}{H}{\R_{+}^{\times}}$ be the unimodular characters of $G$ and $H$, respectively. Since $G\slash H \cong 1+(\CJ\slash \CJ_{0}) \cong \k^{+}$ is an $\ell_{c}$-group, the map $\delta_{G/H} = (\delta_{H})^{-1} (\delta_{G})_{H}$ is identically equal to $1$, and so the \cite[Theorem~3.5]{Bushnell2006a} implies that there is a natural isomorphism $$V^{\vee} \cong (\CInd^{G}_{H}(W))^{\vee} \cong \Ind^{G}_{G_{0}}(W^{\vee}).$$ Since $W$ is one-dimensional (hence, admissible), there is a natural isomorphism $(W^{\vee})^{\vee} \cong W$ (see \cite[Proposition2.9]{Bushnell2006a}), and thus we deduce that there are isomorphisms $$\Ind^{G}_{H}(W) \cong \Ind^{G}_{H}((W^{\vee})^{\vee}) \cong (\CInd^{G}_{H}(W^{\vee}))^{\vee}$$ (by \cite[Theorem~3.5]{Bushnell2006a}). Since $V \cong \CInd^{G}_{H}(W)$, we also have $V^{\vee} \cong \CInd^{G}_{H}(W^{\vee})$, and thus $(\CInd^{G}_{H}(W^{\vee}))^{\vee} \cong (V^{\vee})^{\vee}$. Since $V$ is admissible, \cite[Proposition~2.9]{Bushnell2006a} implies that $V \cong (V^{\vee})^{\vee} \cong \Ind^{G}_{H}(W)$, as required.
\end{proof}

In the next step we establish that (i) implies (ii).

\begin{step} \label{admissible3}
%Let $V$ be an irreducible smooth $G$-module, and let $\CB$ be a subalgebra of $\CA$ such that $V \cong \CInd^{G}_{H}(W)$ where $H = \CB^{\times}$ is the unit group of $\CB$ and $W$ is a one-dimensional smooth $H$-module. Let $T$ denote the diagonal subgroup of $G$, 
Assume that $T \subseteq H$. Then, the smooth $G$-module $V$ is admissible.%, and there is an isomorphism of $G$-modules $V \cong \Ind^{G}_{H}(W)$.
\end{step}

\begin{proof}
As in the previous proof, we argue  by induction on the dimension of $\CA$, the result being obvious in the case where $\CA$ is one-dimensional; indeed, the result is obvious in the case where $V$ is one-dimensional, which clearly includes the case where the $\k$-algebra $\CA$ is semisimple (because, if this is the case, then $\CA$ must be commutative). Therefore, we may assume that $\dim V \geq 2$; in particular, the Jacobson radical $\CJ = \CJ(\CA)$ of $\CA$ must be non-zero.%proceed by induction on the dimension of $\CA$;% (the result being obvious in the case where $\dim\CA = 1$.

Let $\CJ(\CB)$ denote the Jacobson radical of $\CB$. If $\CJ(\CB) + \CJ^{2} = \CJ$, then $\CJ(\CB) = \CJ$ (by Nakayama's Lemma), and thus $H = T(1+\CJ) = G$ and $V = W$ is one-dimensional. It follows that $\CJ(\CB) + \CJ^{2} \subsetneq \CJ$, and so there exists an ideal $\CJ_{0}$ of $\CA$ such that $\CJ(\CB) + \CJ^{2} \subseteq \CJ' \subsetneq \CJ$ and $\dim \CJ' = \dim \CJ - 1$. Let $P' = 1+\CJ'$, and let $G' = TP'$; notice that $P'$ is an ideal subgroup of $G$, and that $G' = (\CA')^{\times}$ is the unit group of the subalgebra $\CA' = \CD \oplus \CJ'$ of $\CA$ where $\CD$ denotes the diagonal subalgebra of $\CA$. Since $H \subseteq T(1+\CJ(\CB)) \subseteq G'$, it is obvious that $$V' = \CInd^{G'}_{H}(W)$$ is an irreducible smooth $G'$-module; indeed, by the transitivity of compact induction, we see that $V \cong \CInd^{G}_{G'}(V')$. Since $\CA'$ is a proper subalgebra of $\CA$, we know by induction that $V'$ is admissible.

If $V'$ is one-dimensional, then it follows from \refs{admissible2} that $V$ is admissible; thus, we may assume that $\dim V' \geq 2$. In this situation, we repeat step-by-step the proof of \reft{main} to construct an ideal subgroup $Q = 1+\CL$ where $\CL \subsetneq \CJ'$ is an ideal of $\CA$ satisfying $\CJ^{n} \subseteq \CL \subseteq \CJ^{n-1}$ and $\dim(\CL\slash \CJ^{n}) = 1$ for a suitable integer $n \geq 2$, and such that $$V' \cong \CInd^{G'}_{G'_{\vartheta}}(V'_{\vartheta})$$ for some smooth character $\vartheta \in Q^{\circ}$. Indeed, the proof of \reft{main} shows that $$V \cong \CInd^{G}_{G_{\vartheta}}(V_{\vartheta});$$ we claim that $V_{\vartheta} = \CInd^{G_{\vartheta}}_{G'_{\vartheta}}\big(V'_{\vartheta}\big)$. On the one hand, we note that there are isomorphisms
\begin{align*}
V &\cong \CInd^{G}_{G'}(V') \cong \CInd^{G}_{G'}\Big(\CInd^{G'}_{G'_{\vartheta}}\big(V'_{\vartheta}\big)\Big) \\ &\cong \CInd^{G}_{G'_{\vartheta}}\big(V'_{\vartheta}\big) \cong \CInd^{G}_{G_{\vartheta}}\Big(\CInd^{G_{\vartheta}}_{G'_{\vartheta}}\big(V'_{\vartheta}\big)\Big),
\end{align*}
and hence $\CInd^{G_{\vartheta}}_{G'_{\vartheta}}\big(V'_{\vartheta}\big)$ is an irreducible smooth $G_{\vartheta}$-module. On the other hand, since $x\phi = \vartheta(x)\phi$ for all $x \in Q$ and all $\phi \in \CInd^{G_{\vartheta}}_{G'_{\vartheta}}\big(V'_{\vartheta}\big)$, it follows from \cite[Corollaire~2 au Th\'eor\`eme~3]{Rodier1977a} that $$\CInd^{G_{\vartheta}}_{G'_{\vartheta}}\big(V'_{\vartheta}\big) \cong V_{\vartheta},$$ which is precisely what we claimed.

Now, we know from by \refp{extension'} that $G'_{\vartheta}$ is the unit group of some subalgebra $\CA'_{\vartheta}$ of $\CA'$; moreover, as in the proof of \reft{main}, we see that there is a subalgebra $\CB'$ of $\CA'_{\vartheta}$ such that $$V'_{\vartheta} \cong \CInd_{H'}^{G'_{\vartheta}}(W')$$ where $H' = (\CB')^{\times}$ is the unit group of $\CB'$ and $W'$ is a one-dimensional smooth $H'$-module. [Notice that $H' \subseteq G'_{\vartheta}$; however, in the general situation, we are not assuming that $H \subseteq G'_{\vartheta}$.] Since $V'$ is admissible, the smooth $G'_{\vartheta}$-module is also admissible, and hence $T \subseteq H'$ (by \refs{admissible1}). Since $$V_{\vartheta} \cong \CInd^{G_{\vartheta}}_{H'}(W')$$ (by the transitivity of c-induction), we conclude that the smooth $G_{\vartheta}$-module $V_{\vartheta}$ is admissible (by induction, because $G_{\vartheta}$ is the unit group of a proper subalgebra of $\CA$). Finally, we recall from \refp{extension} that $\varsigma = \vartheta_{1+\CJ^{n}}$ is a $P$-invariant smooth character of $1+\CJ^{n}$, and that the $P$-orbit $\vartheta^{P} \subseteq Q^{\circ}$ of $\vartheta$ consists of all $\vartheta' \in Q^{\circ}$ which satisfy $(\vartheta')_{1+\CJ^{2}} = \varsigma$; in particular, $\vartheta^{P}$ is a closed subset of $Q^{\circ}$.  %On the other hand, % In particular, since $\vartheta(txt^{-1}) = \tau(txt^{-1}) = \tau(x)$ for all $x \in 1+\CJ^{2}$
Since $T \subseteq H' \subseteq G_{\vartheta}$, %and thus $T \subseteq G'_{\vartheta} \subseteq G_{\vartheta}$. % moreover, there are isomorphisms of $G$-modules $$V \cong \CInd_{G'}^{G}(V_{0}) \cong \CInd_{G'}^{G}\big(\CInd_{H'}^{G'}(W')\big) \cong \CInd_{H'}^{G}(W').$$ If $Q = P'$, then \refl{admissible2} implies that $V$ is admissible; thus, we may assume that $Q$ is a proper ideal subgroup of $P'$. 
it follows that $\vartheta^{G} = \vartheta^{P}$ is a closed subset of $Q^{\circ}$, and thus we conclude that the smooth $G$-module $V \cong \CInd^{G}_{G_{\vartheta}}(V_{\vartheta})$ is admissible (by \cite[Th\'eor\`eme~4]{Rodier1977a}), as required.
%We set $P_{\vartheta} = 1+\CJ_{\varsigma}$, and claim that $$C_{G_{\vartheta}}(\tau) = G_{\vartheta} \cap G(\tau) = T(P'\cap P_{\vartheta}).$$ Since $C_{P}(\vartheta) = P_{\vartheta}$, it is clear that $G_{\vartheta} \subseteq TP_{\vartheta}$, and thus $G_{\vartheta} \cap G(\tau) \subseteq T(P_{\vartheta} \cap P')$ (because $G(\tau) = H = TP_{0}$). For the reverse inclusion, let $t \in T$ be arbitrary, and recall from \refp{extension} that %$\vartheta'$ is an extension of $\varsigma = \vartheta_{1+\CJ^{2}}$, %that $C_{P}(\vartheta') = 1+\CJ_{\varsigma}$ where $\CJ_{\varsigma} \subseteq \CJ$ is an ideal of $\CA$ with $\dim\CJ_{\varsigma} = \dim\CJ-1$, 
%the $P$-orbit $\vartheta^{P} \subseteq (1+\CL)^{\circ}$ of $\vartheta$ consists of all $\vartheta' \in (1+\CL)^{\circ}$ which satisfy $(\vartheta')_{1+\CJ^{2}} = \varsigma$. In particular, since $\vartheta(txt^{-1}) = \tau(txt^{-1}) = \tau(x)$ for all $x \in 1+\CJ^{2}$, we conclude that $\vartheta^{t} = \vartheta^{x}$ for some $x \in P$; indeed, since $\CJ = \CJ' + \CL$, we have $P = P_{0}(1+\CL)$, and thus $\vartheta^{P} = \vartheta^{P_{0}}$.
%
%The proof is complete.
\end{proof}

Finally, we prove that (i) implies (iii).% (and henceThe following result completes the proof of \reft{admissible}.

\begin{step} \label{admissible4}
%Let $V$ be an irreducible smooth $G$-module, and let $\CB$ be a subalgebra of $\CA$ such that $V \cong \CInd^{G}_{H}(W)$ where $H = \CB^{\times}$ is the unit group of $\CB$ and $W$ is a one-dimensional smooth $H$-module. Let $T$ denote the diagonal subgroup of $G$, and 
Assume that $T \subseteq H$. Then, there is an isomorphism of smooth $G$-modules $V \cong \Ind^{G}_{H}(W)$.%, and there is an isomorphism of $G$-modules $V \cong \Ind^{G}_{H}(W)$.
\end{step}

\begin{proof}
Let $\map{\delta_{G}}{G}{\R_{+}^{\times}}$ and $\map{\delta_{H}}{H}{\R_{+}^{\times}}$ be the unimodular characters of $G$ and $H$, respectively. Since $T \subseteq H$, the quotient space $G\slash H$ is naturally homeomorphic to $\CJ\slash \CJ(\CB)$ where $\CJ(\CB)$ denotes the Jacobson radical of $\CB$, and thus the map $\delta_{G/H} = (\delta_{H})^{-1} (\delta_{G})_{H}$ is identically equal to $1$ (because $\CJ\slash \CJ(\CB)$ is an $\ell_{c}$-group). Therefore, \cite[Theorem~3.5]{Bushnell2006a} implies that % there is a natural isomorphism
$$V^{\vee} \cong (\CInd^{G}_{H}(W))^{\vee} \cong \Ind^{G}_{H}(W^{\vee}).$$ On the other hand, since $T \subseteq H$, the smooth $G$-module $V$ is admissible (by the previous lemma), and thus its smooth dual $V^{\vee}$ is also irreducible. Since $\CInd^{G}_{H}(W^{\vee})$ is a submodule of $\Ind^{G}_{H}(W^{\vee})$, we conclude that $V^{\vee} \cong \CInd^{G}_{H}(W^{\vee})$, and thus $$(V^{\vee})^{\vee} \cong \big(\CInd^{G}_{H}(W^{\vee})\big)^{\vee} \cong \Ind^{G}_{H}\big((W^{\vee})^{\vee}\big)$$ (again by \cite[Theorem~3.5]{Bushnell2006a}). Since $(W^{\vee})^{\vee} \cong W$ (because $W$ is one-dimensional) and since $(V^{\vee})^{\vee} \cong V$ (by \cite[Proposition2.9]{Bushnell2006a} because $V$ is admissible), we conclude that $V \cong \Ind^{G}_{H}(W)$, as required.
\end{proof}

The proof of \reft{admissible} is now complete.
\end{proof}

We finish the paper with the following result concerning the unitarisability of an arbitrary irreducible smooth $G$-module.

\begin{theorem} \label{unitarisable}
Let $V$ be an irreducible smooth $G$-module. Then, there is an isomorphism of $G$-modules $V \cong V' \otimes V''$ where $V'$ is a unitarisable irreducible smooth $G$-module and  $V''$ is a one-dimensional smooth $G$-module, and where the $G$-action on $V'\otimes V''$ is given by $g(v'\otimes v'') = (gv')\otimes (gv'')$ for all $g \in G$, all $v' \in V'$ and all $v'' \in V''$.
\end{theorem}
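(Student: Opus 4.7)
The plan is to apply \reft{main} to write $V \cong \CInd^G_H(\C_\vartheta)$ for some subalgebra $\CB$ of $\CA$ with $H = \CB^\times$, and then to factor
$$\vartheta = (\mu|_H)\cdot\delta_{G/H}^{1/2}\cdot\vartheta_0,$$
where $\mu$ is a smooth $\R_+^\times$-valued character of $G$, $\delta_{G/H} = (\delta_H)^{-1}(\delta_G)|_H$ is the modular character of $H\backslash G$, and $\vartheta_0$ is a unitary character of $H$. Given such a factorisation, the projection formula for compact induction yields
$$V \cong \CInd^G_H(\C_{\delta_{G/H}^{1/2}\vartheta_0})\otimes\C_\mu,$$
so we take $V'' := \C_\mu$ and $V' := \CInd^G_H(\C_{\delta_{G/H}^{1/2}\vartheta_0})$.

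The crux will be the construction of $\mu$, equivalently the extension of $\eta := |\vartheta|\cdot\delta_{G/H}^{-1/2} : H \to \R_+^\times$ to $G$. Let $\CD_\CB$ denote the diagonal subalgebra of $\CB$, set $T_\CB := \CD_\CB^\times$ and $P_\CB := 1+\CJ(\CB)$, so that $H = T_\CB\ltimes P_\CB$. Since $P_\CB$ is an algebra group (hence an $\ell_c$-group), \cite[Proposition~1.6]{Bushnell2006a} implies that every smooth character of $P_\CB$ is unitary, so any smooth character of $P_\CB$ into $\R_+^\times$ is trivial; applying this to the restrictions $|\vartheta||_{P_\CB}$, $\delta_G|_{P_\CB}$, and $\delta_H|_{P_\CB}$, one obtains $\eta|_{P_\CB} = 1$. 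Thus $\eta$ factors through $T_\CB \cong (\k^\times)^m$, where $\CD_\CB = \k e'_1\oplus\cdots\oplus\k e'_m$, and takes the form $\sum_j\alpha_je'_j \mapsto \prod_j|\alpha_j|^{s_j}$ for certain $s_j \in \R$. In the commutative semisimple quotient $\CA/\CJ \cong \k^n$, each $\pi(e'_j)$ must be a sum of primitive idempotents, so $\pi(e'_j) = \sum_{i\in I_j}\pi(e_i)$ for some partition $\{I_j\}_{j=1}^{m}$ of $\{1,\ldots,n\}$. Choosing representatives $i_j \in I_j$ and letting $\epsilon_i : G \to (\CA/\CJ)^\times \to \k^\times$ denote the $i$-th coordinate character, I would set
$$\mu(g) := \prod_{j=1}^{m}|\epsilon_{i_j}(g)|^{s_j},\qquad g\in G.$$
A direct check shows $\mu|_P = 1$ and $\mu|_{T_\CB} = \eta|_{T_\CB}$, hence $\mu|_H = \eta$; then $\vartheta_0 := \vartheta\cdot(\mu|_H)^{-1}\cdot\delta_{G/H}^{-1/2}$ is unitary by construction.

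To complete the proof, it remains to show that $V'$ is unitarisable. I would equip $V'$ with the Hermitian form
$$\langle\phi,\psi\rangle := \int_{H\backslash G}\overline{\phi(g)}\psi(g)\rho(g)^{-1}\,d\nu(Hg),$$
where $\rho: G \to \R_+^\times$ is a Rho function satisfying $\rho(hg) = \delta_{G/H}(h)\rho(g)$ and $\nu$ is the associated right-quasi-invariant measure on $H\backslash G$ (both guaranteed by standard locally-compact harmonic analysis, see \cite[Section~2.6]{Folland2016a}). Unitarity of $\vartheta_0$ combined with the transformation law of $\rho$ ensures that the integrand descends to a compactly supported function on $H\backslash G$; $G$-invariance then follows from the defining property of $\nu$, convergence from the compact-support condition in the definition of $\CInd$, and positivity is immediate. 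This is essentially the realisation of the normalised unitary induction of $\vartheta_0$, and once $\mu$ has been constructed it is purely formal. The principal obstacle is therefore the extension of $\eta$ to $G$, whose success rests on the algebra-group argument for $\eta|_{P_\CB} = 1$ together with the identification of the primitive idempotents of $\CD_\CB$ with sums of those of $\CD$ modulo $\CJ$.
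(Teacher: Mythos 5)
Your proposal is correct, and its skeleton is the same as the paper's: apply \reft{main}, split off from the inducing character a positive-valued part that extends to a smooth character of $G$, move that part across compact induction by the projection formula (so $V''$ is the one-dimensional twist), and unitarise what remains by an invariant inner product on the c-induced module. The implementations of the two key steps differ, though. For the extension step, the paper decomposes $S=T_{\CB}$ as (compact)$\times$(free abelian), picks $\sigma\in S^{\circ}$ making $\sigma\tau_{S}$ unitary, extends $\sigma$ to $T$ via a complement $T=ST'$ coming from a bimodule decomposition of $\CD$, and inflates along $G=TP$; you instead observe that $\eta=|\vartheta|\,\delta_{G/H}^{-1/2}$ is automatically trivial on $P_{\CB}$ (smooth characters of $\ell_{c}$-groups are unitary), hence of the form $\prod_{j}|\alpha_{j}|^{s_{j}}$ on $T_{\CB}$, and you extend it through the coordinate characters of $(\CA\slash\CJ)^{\times}$. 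This only uses the images of the idempotents $e'_{j}$ modulo $\CJ$, so it sidesteps the paper's implicit assumption that the diagonal idempotents of $\CB$ sit inside $\CD$ (needed for $T=ST'$). For the unitarisation step, the paper asserts a chain $H=H_{0}\subseteq\cdots\subseteq H_{n}=G$ with each term normal in the next, deduces $(\delta_{G})_{H}=\delta_{H}$ and hence a $G$-invariant measure on $G\slash H$, and integrates directly; you instead build in the factor $\delta_{G/H}^{1/2}$ and use the rho-function/quasi-invariant-measure inner product, i.e. ordinary normalised (pre-)unitary induction, which needs no normality chain and no claim that $\delta_{G/H}$ is trivial — with the paper's conventions (as in the duality theorem of Bushnell--Henniart used in \refs{admissible1}) your normalisation is the correct one, and in the paper's setting the $\delta_{G/H}$-twist is in any case harmless since it cancels in the projection-formula computation $\delta_{G/H}^{1/2}\vartheta_{0}\cdot\mu|_{H}=\vartheta$. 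Two small points you should state explicitly: $V'$ is irreducible because $V'\cong V\otimes\C_{\mu^{-1}}$ and twisting by a smooth character preserves irreducibility, and $\mu$ is indeed a smooth character of $G$ (its kernel contains the open subgroup of elements whose relevant coordinates in $(\CA\slash\CJ)^{\times}$ are units). With these remarks added, your argument is complete and, on the two delicate points above, somewhat more robust than the paper's.
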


\begin{proof}
Let $\CB$ be a subalgebra of $\CA$ such that $V \cong \CInd^{G}_{H}(W)$ where $H = \CB^{\times}$ is the unit group of $\CB$ and $W$ is a one-dimensional smooth $H$-module; moreover, let $\tau \in H^{\circ}$ be the smooth character of $H$ afforded by $W$. Since $\CB$ is a split basic $\k$-algebra, the group $H$ decomposes as the semidirect product $H = SQ$ where $S$ is the unit group of the diagonal algebra of $\CB$ and where $Q$ is the ideal subgroup of $H$ which corresponds to the Jacobson radical of $\CB$; notice that $Q$ is a normal subgroup of $H$, and that $S$ is isomorphic to a finite direct product of copies of $\k^{\times}$.

It is well-known that $\k^{\times} \cong \CO^{\times} \times \mathbb{Z}$ where $\CO$ denotes the ring of algebraic integers of $\k$ (see, for example, \cite[Proposition~5.8]{Narkiewicz2004a}), and thus every smooth character of $\k^{\times}$ can be naturally identified with a product $\alpha \times \beta$ where $\alpha$ is a smooth character of $\CO^{\times}$ and $\beta$ is a smooth character of the infinite cyclic $\mathbb{Z}$. Since $S$ is isomorphic to a finite direct product of copies of $\k^{\times}$, it follows that $S$ decomposes as a direct product $S = S'\times S''$ of a compact subgroup $S'$ and a discrete subgroup $S''$ which is isomorphic to the direct product of a finite number of copies of $\mathbb{Z}$; in particular, every smooth character of $S$ is naturally identified with a product $\sigma' \times \sigma''$ where $\sigma'$ is a smooth character of $S'$ and $\sigma''$ is a smooth character of $S''$. Since every smooth character of a compact group is unitary, we conclude that there exists a smooth character $\sigma \in S^{\circ}$ such that the product $\sigma \tau_{S} \in S^{\circ}$ is an unitary character of $S$.

Let $\sigma^{\ast} \in H^{\circ}$ be defined by $\sigma^{\ast}(sx) = \sigma(s)$ for all $s \in S$ and all $x \in Q$; we note that $$(\sigma^{\ast}\tau)(sx) = (\sigma\tau_{S})(s)\tau(x),\qquad s \in S,\ x \in Q,$$ and thus $\sigma^{\ast}\tau$ is a unitary character of $H$ (because $Q$ is an $\ell_{c}$-group, and hence $\tau_{Q}$ is a unitary character of $Q$; see \cite[Lemma~4.9]{Boyarchenko2011a}). Let $W' \cong \C_{\sigma^{\ast}}$ be a one-dimensional smooth $H$-module which affords the character $\sigma^{\ast}$, and consider the tensor product $W' \otimes W$. We note that $W' \otimes W$ is one-dimensional and affords the unitary character $\sigma^{\ast} \tau \in H^{\circ}$; hence, the smooth $H$-module $W' \otimes W$ is unitarisable. We define $V'$ to be the c-induced smooth $G$-module $$V' = \CInd^{G}_{H}(W'\otimes W),$$ and claim that $V'$ is unitarisable. To see this, we observe that there is a chain of subgroups $H = H_{0} \subseteq H_{1} \subseteq \cdots \subseteq H_{n} = G$ such that $H_{i-1}$ is normal in $H_{i}$ for all $1 \leq i \leq n$; for each $1 \leq i \leq n$, let $\delta_{i}$ denote the modular character of $H_{i}$. By \cite[Corollary~1.5.4]{Deitmar2014a}, we have $(\delta_{i})_{H_{i-1}} = \delta_{i-1}$ for all $1 \leq i \leq n$; in particular, we conclude that $(\delta_{G})_{H} = \delta_{H}$ where $\delta_{G}$ and $\delta_{H}$ are the modular characters of $G$ and $H$, respectively. It follows from \cite[Theorem~1.5.3]{Deitmar2014a} that there exists a non-zero Radon measure $\mu$ on the quotient space $G\slash H$ which is invariant for the action of $G$; we denote by $\langle \cdot, \cdot \rangle$ the $G$-invariant positive definite Hermitian inner product on $W'\otimes W$, and define $$\big\langle \phi, \psi \big\rangle = \int_{G\slash H} \big\langle \phi(g), \psi(g) \big\rangle d\mu,\qquad \phi,\psi \in \CInd^{G}_{H}(W'\otimes W);$$ notice that every function $\phi \in \CInd^{G}_{H}(W'\otimes W)$ defines naturally a function on $G\slash H$ which clearly has compact support. It is straightforward to check that this formula defines a positive definite Hermitian inner product on $W'\otimes W$ on $\CInd^{G}_{H}(W'\otimes W)$ which is $G$-invariant (because the measure $\mu$ is $G$-invariant). Therefore, we conclude that $V' \cong \CInd^{G}_{H}(W'\otimes W)$ is unitarisable, as claimed.% And one can see that it is in fact an positive definite Hermitian inner product invariant under c ? IndGH(?).

Finally, as in the proof of \reft{admissible} (\refs{admissible1}), we see that there exists a subgroup $T'$ of $T$ such that $T$ decomposes as a direct product $T = ST'$, and thus the smooth character $\sigma \in S^{\circ}$ can be extended to a smooth character $\sigma' \in T^{\circ}$; moreover, since $G$ is the semidirect product $G = TP$ where $P = 1+\CJ(\CA)$, there is a smooth character $\vartheta \in G^{\circ}$ such that $\vartheta(tx) = \sigma'(t)$ for all $t \in T$ and all $x \in P$. Let $W'' \cong \C_{\vartheta}$ be a one-dimensional smooth $H$-module which affords the character $\vartheta$, and consider the tensor product $W'' \otimes \CInd^{G}_{H}(W)$. For every $w'' \in W''$ and every $\phi \in \CInd^{G}_{H}(W)$, we define the function $\map{\psi(w'' \otimes \phi)}{G}{W'' \otimes W}$ by the rule $$\psi(w'' \otimes \phi)(g) = (gw'') \otimes \phi(g)) = \vartheta(g)(w'' \otimes \phi(g)),\qquad g \in G;$$ it is easy to check that the mapping $w'' \otimes \phi \mapsto \psi(w'' \otimes \phi)$ defines an isomorphism of $G$-modules $$W'' \otimes \CInd^{G}_{H}(W) \cong \CInd^{G}_{H}\big(\Res^{G}_{H}(W'') \otimes W\big).$$ Since we clearly have $\Res^{G}_{H}(W'') \cong W'$ (as $H$-modules), we conclude that $$W'' \otimes \CInd^{G}_{H}(W) \cong \CInd^{G}_{H}\big(W' \otimes W\big),$$ and thus $$V \cong \CInd^{G}_{H}(W) \cong V'' \otimes \CInd^{G}_{H}\big(W' \otimes W\big) \cong V' \otimes V''$$ where $V'' = (W'')^{\vee} \cong \C_{\vartheta^{-1}}$. The result follows.% (with $V' = \CInd^{G}_{H}\big(W' \otimes W)$ and $V'' = \C_{\vartheta^{-1}}$). .
\end{proof}

%\bibliographystyle{abbrv}
%\bibliography{references}

\begin{thebibliography}{10}

\bibitem{Bernstein1976a}
I.~N. Bern\v{s}te\u{\i}n and A.~V. Zelevinski\u{\i}.
\newblock Representations of the group {$GL(n,F),$} where {$F$} is a local
  non-{A}rchimedean field.
\newblock {\em Uspehi Mat. Nauk}, 31(3(189)):5--70, 1976.

\bibitem{Boyarchenko2011a}
M.~{Boyarchenko}.
\newblock {Representations of unipotent groups over local fields and Gutkin's
  conjecture.}
\newblock {\em {Math. Res. Lett.}}, 18(3):539--557, 2011.

\bibitem{Bushnell2006a}
C.~J. Bushnell and G.~Henniart.
\newblock {\em The local {L}anglands conjecture for {$\rm GL(2)$}}, volume 335
  of {\em Grundlehren der Mathematischen Wissenschaften [Fundamental Principles
  of Mathematical Sciences]}.
\newblock Springer-Verlag, Berlin, 2006.

\bibitem{Cartier1979a}
P.~Cartier.
\newblock Representations of {$p$}-adic groups: a survey.
\newblock In {\em Automorphic forms, representations and {$L$}-functions
  ({P}roc. {S}ympos. {P}ure {M}ath., {O}regon {S}tate {U}niv., {C}orvallis,
  {O}re., 1977), {P}art 1}, Proc. Sympos. Pure Math., XXXIII, pages 111--155.
  Amer. Math. Soc., Providence, R.I., 1979.

\bibitem{Deitmar2014a}
A.~Deitmar and S.~Echterhoff.
\newblock {\em Principles of harmonic analysis}.
\newblock Universitext. Springer, Cham, 2nd edition, 2014.

\bibitem{Folland2016a}
G.~Folland.
\newblock {\em A course in abstract harmonic analysis}.
\newblock Textbooks in Mathematics. CRC Press, Boca Raton, FL, 2nd edition,
  2016.

\bibitem{Gutkin1974a}
E.~{Gutkin}.
\newblock {Representations of algebraic unipotent groups over a self-dual
  field.}
\newblock {\em {Funct. Anal. Appl.}}, 7:322--323, 1974.

\bibitem{Halasi2004a}
Z.~{Halasi}.
\newblock {On the characters and commutators of finite algebra groups.}
\newblock {\em {J. Algebra}}, 275(2):481--487, 2004.

\bibitem{Halasi2006a}
Z.~{Halasi}.
\newblock {On the characters of the unit group of DN-algebras.}
\newblock {\em {J. Algebra}}, 302(2):678--685, 2006.

\bibitem{Isaacs1995a}
I.~{Isaacs}.
\newblock {Characters of groups associated with finite algebras.}
\newblock {\em {J. Algebra}}, 177(3):708--730, 1995.

\bibitem{Jacquet1975a}
H.~{Jacquet}.
\newblock {Sur les repr\'esentations des groupes reductifs p-adiques.}
\newblock {\em {C. R. Acad. Sci., Paris, S\'er. A}}, 280:1271--1272, 1975.

\bibitem{Matsumura1989a}
H.~Matsumura.
\newblock {\em Commutative ring theory}, volume~8 of {\em Cambridge Studies in
  Advanced Mathematics}.
\newblock Cambridge University Press, Cambridge, 2nd edition, 1989.
\newblock Translated from the Japanese by M. Reid.

\bibitem{Narkiewicz2004a}
W.~Narkiewicz.
\newblock {\em Elementary and analytic theory of algebraic numbers}.
\newblock Springer Monographs in Mathematics. Springer-Verlag, Berlin, 3rd
  edition, 2004.

\bibitem{Rodier1977a}
F.~Rodier.
\newblock D\'{e}composition spectrale des repr\'{e}sentations lisses.
\newblock pages 177--195. Lecture Notes in Math., Vol. 587, 1977.

\bibitem{Szegedy1996a}
B.~{Szegedy}.
\newblock {On the characters of the group of upper-triangular matrices.}
\newblock {\em {J. Algebra}}, 186(1):113--119, 1996.

\end{thebibliography}
%
%\end{document}

\end{document}